\newtheorem{theorem}{Theorem}[section]
\newtheorem{proposition}{Proposition}[section]
\newtheorem{remark}{Remark}[section]
\newtheorem{lemma}{Lemma}[section]
\newtheorem{definition}{Definition}[section]
\newtheorem{example}{Example}[section]
\def\cC{{\mathcal C}}
\def\Vt{{\widetilde{V}}}
\def\P{\mathbb{P}}
\def\N{{\mathbb N}}
\def\Z{{\mathbb Z}}
\def\R{{\mathbb R}}
\def\P{{\mathbb P}}
\def\cH{{\cal H}}
\def\cC{{\cal C}}
\def\cL{{\cal L}}
\def\cM{{\cal M}}
\def\cS{{\cal S}}
\def\cV{{\cal V}}
\def\cX{{\cal X}}
\def\cE{{\cal E}}
\def\cJ{{\cal J}}
\def\Vt{\widetilde{V}}
\def\Sigmat{\widetilde{\Sigma}}
\def\zt{\widetilde{z}}
\def\ut{\widetilde{u}}
\def\ut{\widetilde{u}}
\def\rt{\widetilde{r}}
\def\st{\widetilde{s}}
\def\Vt{\widetilde{V}}
\def\cD{\mathcal{D}}
\def\cA{\mathcal{A}}
\def\cB{\mathcal{B}}
\def\P{\mathbb{P}}
\newcommand \dps{\displaystyle }
\title{Greedy algorithms for high-dimensional eigenvalue problems}
\author{Eric Canc\`es, Virginie Ehrlacher, Tony Leli\`evre}
\begin{document}

\selectlanguage{english}

\maketitle

\begin{abstract} In this article, we present two new greedy algorithms for the computation of the lowest eigenvalue (and an associated eigenvector)
 of a high-dimensional eigenvalue problem, and prove some convergence results for these algorithms and their orthogonalized versions. 
The performance of our algorithms is illustrated on numerical test cases (including the computation of the buckling modes of a microstructured plate),
 and compared with that of another greedy algorithm for eigenvalue problems introduced by Ammar and Chinesta.    
\end{abstract}


\section{Introduction}

High dimensional problems are encountered in many application fields, among which electronic structure calculation, molecular dynamics, uncertainty quantification, multiscale homogenization, and mathematical finance. The numerical simulation of these problems, which requires specific approaches due to the so-called \itshape curse of dimensionality\normalfont~\cite{Bellman}, has fostered the development of a wide variety of new numerical methods and algorithms, such as sparse grids~\cite{Griebel, SchwabPeter}, reduced bases~\cite{Maday}, sparse tensor products~\cite{Hackbusch}, and adaptive polynomial approximations~\cite{Cohen}. 

In this article, we focus on an approach introduced by Ladev\`eze~\cite{Ladeveze}, Chinesta~\cite{Chinesta}, Nouy~\cite{Nouy} and coauthors in different contexts, 
relying on the use of \itshape greedy algorithms\normalfont~\cite{Temlyakov}. This class of methods is also called \itshape Progressive Generalized Decomposition\normalfont
~\cite{Ladevezeref} in the literature. 
 
Let $V$ be a Hilbert space of functions depending on $d$ variables $x_1 \in \cX_1, \; \ldots ,\;  x_d \in \cX_d$, where, typically, $\cX_j \subset \R^{m_j}$. For all $1\leq j \leq d$, let $V_j$ be a Hilbert space of functions depending 
only on the variable $x_j$ such that for all $d$-tuple $\left( \phi^{(1)}, \ldots , \phi^{(d)} \right)\in V_1\times \cdots \times V_d$, the tensor-product function 
$\phi^{(1)} \otimes \cdots \otimes \phi^{(d)}$ defined by
$$
\phi^{(1)} \otimes \cdots \otimes \phi^{(d)} \; : \; \left\{ \;
\begin{array}{ccc} 
 \cX_1 \times \cdots \times \cX_d & \to & \R \\
(x_1, \ldots , x_d) & \mapsto & \phi^{(1)}(x_1) \cdots \phi^{(d)}(x_d)\\
\end{array}
\right .
$$
belongs to $V$. Let $u$ be a specific function of $V$, for instance the solution of a Partial Differential Equation (PDE). Standard linear approximation approaches such as Galerkin methods consist in approximating the 
function $u(x_1, \ldots, x_d)$ as
\begin{align*}
u(x_1, \ldots , x_d) & \approx  \sum_{1\leq i_1, \ldots, i_d \leq N} \lambda_{i_1, \ldots , i_d} \phi_{i_1}^{(1)}(x_1) \cdots \phi_{i_d}^{(d)}(x_d)\\
&  =  \sum_{1\leq i_1, \ldots , i_d \leq N} \lambda_{i_1, \ldots , i_d} \phi_{i_1}^{(1)}\otimes \cdots \otimes \phi_{i_d}^{(d)}(x_1, \ldots , x_d),\\
\end{align*}
where $N$ is the number of degrees of freedom per variate (chosen the same for each variate to simplify the notation), and where for all $1\leq j \leq d$, 
$\left( \phi_i^{(j)} \right)_{1\leq i \leq N}$ is an \itshape a priori chosen  \normalfont discretization basis of functions belonging to $V_j$. 
To approximate the function $u$, the set of $N^d$ real numbers $\left( \lambda_{i_1, \ldots , i_d} \right)_{1\leq i_1, \ldots , i_d \leq N}$ 
must be computed. Thus, the size of the discretized problem to solve scales exponentially with $d$, the number of variables. Because of this difficulty,
 classical methods cannot be used in practice 
to solve high-dimensional PDEs. Greedy algorithms also consist in approximating the function $u(x_1, \ldots, x_d)$ as a sum of tensor-product functions
$$
u(x_1, \ldots, x_d) \approx u_n(x_1, \ldots, x_d)=\sum_{k=1}^n r_k^{(1)}(x_1)\cdots r_k^{(d)}(x_d) = \sum_{k=1}^n r_k^{(1)}\otimes \cdots \otimes r_k^{(d)}(x_1,\ldots, x_d),
$$
where for all $1\leq k \leq n$ and all $1\leq j \leq d$, $r_k^{(j)} \in V_j$. But in contrast with standard linear approximation methods, the sequence of tensor-product functions $\left( r_k^{(1)}\otimes \cdots \otimes r_k^{(d)} \right)_{1\leq k \leq n}$ 
is not chosen {\it a priori}; it is constructed iteratively using a greedy procedure. Let us illustrate this on the simple case when the function $u$ to be computed is the unique solution of a minimization problem of the form
\begin{equation}\label{eq:minpb}
u = \mathop{\mbox{\rm argmin}}_{v\in V} \cE(v),
\end{equation}
where $\cE: V\to \R$ is a strongly convex functional. Denoting by 
$$
\Sigma^\otimes := \left\{ r^{(1)} \otimes \cdots \otimes r^{(d)}\; | \; r^{(1)} \in V_1, \; \ldots, \; r^{(d)} \in V_d\right\}
$$ 
the set of rank-1 tensor-product functions, the \itshape Pure Greedy Algorithm \normalfont (PGA) for solving (\ref{eq:minpb})  reads
\begin{itemize}
 \item \bfseries Initialization: \normalfont set $u_0  := 0$;  
\item \bfseries Iterate on $n\geq 1$: \normalfont find $z_n:=  r_n^{(1)} \otimes \cdots \otimes r_n^{(d)} \in \Sigma^\otimes$ such that
$$
z_n \in \mathop{\mbox{\rm argmin}}_{z \in \Sigma^\otimes} \cE\left( u_{n-1} + z \right),
$$
and set $u_n := u_{n-1} + z_n$.
\end{itemize}
The advantage of such an approach is that if, as above, a discretization basis $\left( \phi^{(j)}_i \right)_{1\leq i \leq N}$ is used for the approximation of 
the function $r_n^{(j)}$, each iteration of the algorithm requires the resolution of a discretized problem of size $dN$.
 The size of the problem to solve at iteration $n$ therefore scales linearly with the number of variables. Thus, 
using the above PGA enables one to approximate the function $u(x_1, \ldots , x_d)$ through the resolution of a sequence of low-dimensional problems, instead of one high-dimensional problem.  

\medskip

Greedy algorithms have been extensively studied in the framework of problem (\ref{eq:minpb}). The PGA has been analyzed from a mathematical point of view, 
firstly in~\cite{LBLM} in the case when $\cE(v) := \|v-u\|_V^2$, then in~\cite{CELgreedy} in the case of a more general nonquadratic strongly convex energy functional $\cE$. In the latter article, it is proved that the sequence $(u_n)_{n\in\N^*}$ strongly converges in $V$ to $u$, 
provided that: (i) $\Sigma^\otimes$ is weakly closed in $V$ and  $\mbox{\rm Span}\left(\Sigma^\otimes\right)$ is dense in $V$; 
(ii) the functional $\cE$ is strongly convex, differentiable on $V$, and its derivative is Lipschitz on bounded domains. 
An exponential convergence rate is also proved in the case when $V$ is finite-dimensional. In~\cite{NouyFalco}, these results have been extended to the case when general tensor subsets $\Sigma$ are considered instead of the set of rank-1 
tensor-products $\Sigma^{\otimes}$, and under weaker assumptions on the functional $\cE$. The authors also generalized the convergence results to other variants 
of greedy algorithms, like the \itshape Orthogonal Greedy Algorithm \normalfont (OGA), and to the case when the space $V$ is a Banach space. 

\medskip

The analysis of greedy algorithms for other kinds of problems is less advanced~\cite{Ladevezeref}. We refer to~\cite{nonsym} for a review of 
the mathematical issues arising in the application of greedy algorithms to non-symmetric linear problems. To our knowledge, 
the literature on greedy algorithms for eigenvalue problems is very limited. Penalized formulations of constrained minimization problems enable one to recover the structure of unconstrained 
minimization problems and to use the existing theoretical framework for the PGA and the OGA~\cite{CELgreedy,NouyFalcopen}. The only reference we are aware of 
about greedy algorithms for eigenvalue problems without the use of a penalized formulation is an article by Ammar and Chinesta~\cite{Chinesta-QC}, 
in which the authors propose a greedy algorithm to compute the lowest eigenstate of a bounded from below self-adjoint operator, and apply it to electronic structure calculation. 
No analysis for this algorithm is given though. Let us also mention that the use of tensor formats for eigenvalue problems has been recently 
investigated~\cite{Hackbusch, Schneider, Beylkin1, Beylkin2, Khoromskij}, still in the context of electronic structure calculation.

\medskip

In this article, we propose two new greedy algorithms for the computation of the lowest eigenstate of high-dimensional eigenvalue problems and prove some 
convergence results for these algorithms and their orthogonalized versions. We would like to point out that these algorithms are not based on a penalized 
formulation of the eigenvalue problem.

The outline of the article is as follows. In Section~\ref{sec:prel}, we introduce some notation, give some prototypical examples 
of problems and tensor subsets for which our analysis is valid, and recall earlier results on greedy algorithms for unconstrained convex minimization problems.
 In Section~\ref{sec:mainsec}, the two new approaches are presented along with our main convergence results.
 The first algorithm is based on the minimization of the Rayleigh quotient associated to 
the problem under consideration. The second method relies on the minimization of 
a residual associated to the eigenvalue problem. Orthogonalized versions of these algorithms are also introduced. In Section~\ref{sec:resnum}, we detail how these algorithms can be implemented in practice in the case of rank-1 tensor product functions. 
The numerical behaviors of our algorithms and of the one proposed in~\cite{Chinesta-QC} are illustrated in Section~\ref{sec:numtest}, 
first on a toy example, then on the computation of the buckling modes of a microstructured plate. 
The proofs of our results are given in Section~\ref{sec:proof}. Finally, some pathological cases are discussed in the Appendix. Let us mention that we do not cover here 
the case of parametric eigenvalue problems which will make the object of a forthcoming article.

\section{Preliminaries}\label{sec:prel}

\subsection{Notation and main assumptions}\label{sec:not}

Let us consider two Hilbert spaces $V$ and $H$, endowed respectively with the scalar products $\langle \cdot, \cdot \rangle_V$ and 
$\langle \cdot, \cdot \rangle$, such that, unless it is otherwise stated, 
\begin{itemize}
 \item [(HV)] the embedding $V \hookrightarrow H$ is dense and compact. 
\end{itemize}
The associated norms are denoted respectively by $\|\cdot\|_V$ and $\|\cdot\|$. Let us recall that it follows from (HV) that the weak convergence in $V$ implies the strong convergence in $H$. 

Let $a:V\times V \to \R$ be a symmetric continuous bilinear form on $V\times V$ such that
\begin{itemize}
 \item [(HA)] $\exists \gamma, \nu >0, \; \mbox{ such that }\; \forall v\in V, \; a(v,v) \geq \gamma \|v\|_V^2 - \nu \|v\|^2. $
\end{itemize}

The bilinear form $\langle \cdot, \cdot \rangle_a$, defined by
\begin{equation}\label{eq:ascal}
\forall v,w\in V, \quad \langle v, w\rangle_a := a(v,w) + \nu\langle v, w\rangle,
\end{equation}
is a scalar product on $V$, whose associated norm, denoted by $\|\cdot\|_a$, is equivalent to the norm $\|\cdot\|_V$. Besides, we can also assume without loss of generality that the constant $\nu$ is chosen 
so that for all $v\in V$, $\|v\|_a \geq \|v\|$.

\medskip

It is well-known (see e.g. \cite{ReedSimon4}) that, under the above assumptions (namely (HA) and (HV)), 
there exists a sequence $(\psi_p, \mu_p)_{p\in\N^*}$ of solutions to the elliptic eigenvalue problem
\begin{equation}\label{eq:eigenvalue}
\left\{
\begin{array}{l}
 \mbox{find }(\psi,\mu)\in V\times \R \mbox{ such that }\|\psi\|=1 \mbox{ and }\\
\forall v\in V, \; a(\psi,v) = \mu\langle \psi,v \rangle \\
\end{array}
\right .
\end{equation}
such that $(\mu_p)_{p\in\N^*}$ forms a non-decreasing sequence of 
real numbers going to infinity and $(\psi_p)_{p\in\N^*}$ is an orthonormal basis of $H$. 
We focus here on the computation of $\mu_1$, the lowest eigenvalue of $a(\cdot, \cdot)$, and of an associated $H$-normalized eigenvector. Let us note that, from (HA), for all 
$p\in \N^*$, $\mu_p + \nu >0$. 

\medskip

In the case when the embedding $V\hookrightarrow H$ is dense but not compact (i.e. when (HV) does not hold), the spectrum of the unique self-adjoint operator $A$ on $H$ with form domain $V$ associated 
with the quadratic form $a(\cdot, \cdot)$ can be purely continuous; in this case, (\ref{eq:eigenvalue}) has no solution. However, if $A$ has at least one discrete eigenvalue 
located below the minimum of its essential spectrum, convergence results for the second algorithm we propose can be established. This is the object of Proposition~\ref{prop:noHV}.

\begin{definition}\label{def:dictionary}
 A set $\Sigma \subset V$ is called a \normalfont dictionary \itshape of $V$ if $\Sigma$ satisfies the following three conditions: 
\begin{description}
 \item[(H$\Sigma 1$)] $\Sigma$ is a non-empty cone, i.e. $0\in \Sigma$ and for all $(z,t)\in \Sigma\times \R$, $tz\in \Sigma$; 
\item[(H$\Sigma 2$)] $\Sigma$ is weakly closed in $V$; 
\item[(H$\Sigma 3$)] $\mbox{\rm Span}(\Sigma)$ is dense in $V$.  
\end{description}
\end{definition}

In practical applications for high-dimensional eigenvalue problems, 
the set $\Sigma$ is typically an appropriate set of tensor formats used to perform the greedy algorithms presented in Section~\ref{sec:algo}. We also denote by 
\begin{equation}\label{eq:defsigstar}
\Sigma^*:= \Sigma \setminus\{ 0\}. 
\end{equation}

\subsection{Prototypical example}\label{sec:example}

Let us present a prototypical example of the high-dimensional eigenvalue problems we have in mind, along with possible dictionaries. 

Let $\cX_1,\; \ldots, \;\cX_d$ be bounded regular domains of $\R^{m_1}, \;\ldots ,\; \R^{m_d}$ respectively. Let $V = H^1_0(\cX_1 \times \cdots \times \cX_d)$ 
and $H= L^2(\cX_1\times \cdots \times \cX_d)$. It follows from the Rellich-Kondrachov theorem that these spaces satisfy assumption (HV). 
Let $b:\cX_1 \times \cdots \times \cX_d \to \R$ be a measurable real-valued function such that 
$$
\exists \beta, B>0, \; \mbox{ such that } \; \beta \leq b(x_1, \ldots, x_d) \leq B, \; \mbox{ for a.a. }(x_1, \ldots, x_d)\in \cX_1\times \cdots \times \cX_d.
$$
Besides,  let $W\in L^q(\cX_1\times \cdots \times \cX_d)$ with $q = 2$ if $m\leq 3$, and $q > m/2$ for $m\geq 4$ where $m := m_1 + \cdots + m_d$.
A prototypical example 
of a continuous symmetric bilinear form $a: V\times V \to \R$ satisfying (HA) is 
\begin{equation}\label{eq:defaex}
\forall v,w\in V, \quad a(v,w) := \int_{\cX_1\times \cdots \times \cX_d} \left( b \nabla v \cdot \nabla w + W vw\right).
\end{equation}
In this particular case, the eigenvalue problem (\ref{eq:eigenvalue}) also reads
$$
\left\{
\begin{array}{l}
 \mbox{find } (\psi,\mu)\in H^1_0(\cX_1\times \cdots \times \cX_d)\times \R \mbox{ such that }\|\psi\|_{L^2(\cX_1 \times \cdots \times \cX_d)} = 1 \mbox{ and }\\
-\mbox{div}\left( b \nabla \psi \right) + W\psi = \mu \psi \mbox{ in }\cD'(\cX_1\times \cdots \times \cX_d).\\
\end{array}
 \right .
$$ 
 
For all $1\leq j \leq d$, we denote by $V_j:= H^1_0(\cX_j)$. 
Some examples of dictionaries $\Sigma$ based on different tensor formats satisfying (H$\Sigma 1$), (H$\Sigma 2$) and (H$\Sigma 3$) are
 the set of rank-$1$ tensor-product functions
\begin{equation}\label{eq:rank1}
\Sigma^\otimes:= \left\{ r^{(1)} \otimes \cdots \otimes r^{(d)}\;| \; \forall 1\leq j \leq d, \; r^{(j)} \in V_j\right\},
\end{equation}
as well as other tensor formats~\cite{Hackbusch, Khoromskij}, for instance the sets of rank-$R$ Tucker, rank-$R$ Tensor Train, or rank-$R$ Tensor Chain functions, 
with $R\in \N^*$.

\subsection{Greedy algorithms for unconstrained convex minimization problems}\label{sec:convex}

We recall here some results proved in \cite{CELgreedy,Figueroa, LBLM,NouyFalco} on greedy algorithms for convex minimization problems. 
These algorithms are important for our purpose, as they are used to solve subproblems in the strategies we propose
 for the resolution of the eigenvalue problem (\ref{eq:eigenvalue}). 

Let $\cE: V \to \R$ be a real-valued functional defined on $V$ such that
\begin{itemize}
 \item [(HE1)] $\cE$ is differentiable on $V$ and its gradient is Lipschitz on bounded sets, i.e. for all $K\subset V$ bounded subset of $V$, there exists $L_K\in \R_+$ such that
$$
\forall v,w\in K, \quad \|\nabla\cE(v) - \nabla \cE(w)\|_V \leq L_K \|v-w\|_V;
$$
\item[(HE2)] $\cE$ is elliptic, i.e. there exist $\eta>0$ and $s>1$ such that 
$$
\forall v,w\in V, \quad \left\langle \nabla \cE(v) - \nabla \cE(w), v-w \right\rangle_V \geq \eta\|v-w\|^s.
$$ 
\end{itemize}

Then, the functional $\cE$ is strictly convex and the minimization problem
\begin{equation}\label{eq:minc}
\mbox{ \rm find }u\in V \mbox{ such that }u \in \mathop{\rm argmin}_{v\in V} \cE(v),
\end{equation}
has a unique solution. The \itshape Pure Greedy Algorithm \normalfont (PGA) and the \itshape Orthogonal Greedy Algorithm \normalfont (OGA)~\cite{Temlyakov} are defined as follows. 
\medskip

\bfseries Pure Greedy Algorithm (PGA): \normalfont

\begin{itemize}
\item \bfseries Initialization: \normalfont set $u_0 = 0$;
\item \bfseries Iterate on $n\geq 1$: \normalfont find $z_n\in \Sigma$ such that 
\begin{equation}\label{eq:minconv}
z_n \in \mathop{\mbox{argmin}}_{z\in \Sigma} \cE(u_{n-1} + z),
\end{equation}
and set $u_n  := u_{n-1} + z_n$.
\end{itemize}

\medskip

\bfseries Orthogonal Greedy Algorithm (OGA): \normalfont

\begin{itemize}
\item \bfseries Initialization: \normalfont set $u_0 = 0$;
\item \bfseries Iterate on $n\geq 1$: \normalfont find $z_n\in \Sigma$ such that 
\begin{equation}\label{eq:minconvorth}
z_n \in \mathop{\mbox{argmin}}_{z\in \Sigma} \cE(u_{n-1} + z);
\end{equation}
find $\left(c_1^{(n)}, \ldots, c_n^{(n)} \right) \in \R^{n}$ such that
\begin{equation}\label{eq:minorth}
\left( c_1^{(n)}, \ldots, c_n^{(n)}\right) \in \mathop{\rm argmin}_{(c_1, \ldots, c_n) \in \R^n} \cE\left(c_1 z_1 + c_2 z_2 + \cdots + c_nz_n\right),
\end{equation}
and set $u_n  := \sum_{k=1}^n c_k^{(n)}z_k$.
\end{itemize}

\medskip

The following lemma is proved in~\cite{NouyFalco}.
\begin{lemma}\label{lem:lemconvex}
Let $V$ be a separable Hilbert space, $\Sigma$ a dictionary of $V$, and $\cE:V\to \R$ satisfying (HE1) and (HE2). For all $w\in V$, there exists at least one solution to the 
minimization problem: 
$$
\begin{array}{l}
\mbox{find }z_0\in \Sigma \mbox{ such that}\\
z_0 \in \mathop{\mbox{argmin}}_{z\in \Sigma} \cE(w+z).\\
\end{array}
$$
\end{lemma}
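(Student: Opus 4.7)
The plan is to apply the direct method of the calculus of variations. Since $0\in\Sigma$ by (H$\Sigma 1$), the infimum $m := \inf_{z\in\Sigma}\cE(w+z)$ satisfies $m\leq \cE(w)<+\infty$, so I may fix a minimizing sequence $(z_n)_{n\in\N^*}\subset \Sigma$ with $\cE(w+z_n)\to m$.

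The first step is to establish coercivity of $\cE(w+\cdot)$. Using the fundamental theorem of calculus along the segment from $w$ to $w+z$, combined with the ellipticity inequality (HE2) applied to the pair $(w+tz,w)$ (which gives $t\,\langle \nabla\cE(w+tz)-\nabla\cE(w),z\rangle_V\geq \eta\, t^s\|z\|^s$ for $t\in[0,1]$), I obtain the standard bound
\begin{equation*}
\cE(w+z)\geq \cE(w) + \langle \nabla\cE(w), z\rangle_V + \frac{\eta}{s}\,\|z\|^s.
\end{equation*}
Since $s>1$, this forces $\cE(w+\cdot)$ to be coercive, and the minimizing sequence $(z_n)$ is therefore bounded in $V$. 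Because $V$ is a separable Hilbert space, I may extract a subsequence $z_{n_k}\wlim z_0$ in $V$. Assumption (H$\Sigma 2$) (weak closedness of $\Sigma$) yields $z_0\in\Sigma$, and the convexity of $\cE$ (implied by (HE2)) together with its continuity (provided by (HE1)) makes $\cE$ weakly sequentially lower semicontinuous on $V$. Passing to the limit,
\begin{equation*}
\cE(w+z_0)\leq \liminf_{k\to\infty}\cE(w+z_{n_k})=m,
\end{equation*}
so $z_0$ is a minimizer.

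The only delicate point is the coercivity estimate, because the norm $\|\cdot\|$ appearing in (HE2) is written without a subscript and the convention of the paper is that this symbol denotes the $H$-norm. If one reads $\|\cdot\|$ in (HE2) as the $V$-norm — which seems to be the intended interpretation and is what is used in the closest references, in particular~\cite{NouyFalco} — then the argument above is immediate. Otherwise the bound only gives $H$-boundedness of the minimizing sequence, and one must upgrade this to $V$-boundedness by combining (HE2) with the local Lipschitz bound in (HE1) before performing the weak extraction. Every other ingredient (weak closedness of the admissible set, weak lower semicontinuity of a convex continuous functional on a Hilbert space) is standard.
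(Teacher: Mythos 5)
Your argument is correct: the coercivity bound obtained from the fundamental theorem of calculus together with (HE2), followed by extraction of a weakly convergent subsequence, the weak closedness assumption (H$\Sigma 2$), and the weak lower semicontinuity of the convex continuous functional $\cE$, is precisely the direct-method proof that the paper delegates to~\cite{NouyFalco}. Your reading of $\|\cdot\|$ in (HE2) as the $V$-norm is the intended one (it is the norm used in the cited references, and it is what the application to the quadratic functionals $\cE_n$ in Lemma~\ref{lem:Redef} requires), so the caveat in your final paragraph does not affect the validity of the argument.
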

 
This lemma implies in particular that all the iterations of the PGA and OGA are well-defined. Besides, the following theorem holds~\cite{NouyFalco}.
\begin{theorem}\label{th:convex}
 Let $V$ be a separable Hilbert space, $\Sigma$ a dictionary of $V$, and $\cE:V\to \R$ satisfying (HE1) and (HE2). 
Then, each iteration of the PGA and OGA is well-defined in the sense that there always exists a solution to the minimization problems (\ref{eq:minconv}), 
(\ref{eq:minconvorth}) and (\ref{eq:minorth}). Besides, the sequence $(u_n)_{n\in\N^*}$ strongly converges in $V$ to $u$, the unique solution of (\ref{eq:minc}).
\end{theorem}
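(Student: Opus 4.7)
The plan is to split the argument into well-definedness of each iteration and convergence of the iterates, and to handle the PGA in detail first, then explain why the same scheme goes through for the OGA.

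For well-definedness, the PGA step (\ref{eq:minconv}) and the outer OGA step (\ref{eq:minconvorth}) fall directly under Lemma~\ref{lem:lemconvex} (applied with $w = u_{n-1}$). The orthogonalization step (\ref{eq:minorth}) is a minimization of $\cE$ on the finite-dimensional subspace $\mathrm{Span}(z_1,\ldots,z_n)$; integrating (HE2) along straight segments shows that $\cE$ is strictly convex and coercive on $V$, so its restriction to any finite-dimensional subspace attains a unique minimum.

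For the PGA convergence, I would first observe that $0\in\Sigma$ (by (H$\Sigma 1$)) gives $\cE(u_n)\le \cE(u_{n-1})$, so $(\cE(u_n))$ is non-increasing and bounded below by $\cE(u)$, hence converges to some $\ell \ge \cE(u)$; coercivity then forces $(u_n)$ to be bounded in $V$, and by (HE1) so is $(\nabla\cE(u_n))$. The core quantitative step is the ``gain-per-iteration'' estimate: for each fixed $z \in \Sigma\setminus\{0\}$, the descent lemma coming from (HE1), applied on a bounded set containing the points $u_{n-1}+tz\in\Sigma$ (cone property) for $t$ in a controlled range, yields after optimization over $t$
$$\cE(u_{n-1}) - \cE(u_n) \;\ge\; \frac{|\langle \nabla\cE(u_{n-1}),\,z\rangle_V|^2}{2\,L_z\,\|z\|_V^2}$$
for some uniform Lipschitz constant $L_z$. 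Telescoping in $n$ gives $\langle \nabla\cE(u_{n-1}),z\rangle_V \to 0$ for every $z\in\Sigma$; combined with the boundedness of $(\nabla\cE(u_n))$ in $V$ and the density assumption (H$\Sigma 3$), this forces $\nabla\cE(u_n) \wlim 0$ in $V$ for the whole sequence.

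The hard part will be identifying an arbitrary weak accumulation point $u^*$ of $(u_n)$ with the unique minimizer $u$: the natural approach via the convexity inequality $\cE(v) \ge \cE(u_{n_k}) + \langle \nabla\cE(u_{n_k}), v - u_{n_k}\rangle_V$ involves the product $\langle \nabla\cE(u_{n_k}), u_{n_k}\rangle_V$ of two sequences converging only weakly. For the OGA this obstacle disappears, because the first-order condition for (\ref{eq:minorth}) yields $\langle \nabla\cE(u_n), z_i\rangle_V = 0$ for every $i\le n$ and in particular $\langle \nabla\cE(u_n), u_n\rangle_V = 0$; the ellipticity inequality (HE2) with $v = u_{n_k}$ and $w = u^*$ then collapses into an estimate whose right-hand side vanishes in the limit, giving strong convergence $u_{n_k}\to u^*$, and then $\nabla\cE(u^*) = 0$ by continuity of $\nabla\cE$, i.e.\ $u^* = u$. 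For the PGA, the same conclusion is obtained following the more delicate argument of~\cite{NouyFalco}, which combines the monotonicity of $\nabla\cE$ (from (HE2)) with the above quantitative estimate to rule out any weak limit other than $u$. Once $u^*=u$ is established in either case, uniqueness of weak limits gives $u_n \wlim u$ for the whole sequence, and integrating (HE2) along $[u,u_n]$ with $\nabla\cE(u)=0$ produces
$$\frac{\eta}{s}\|u_n - u\|^s \;\le\; \cE(u_n) - \cE(u) \;\longrightarrow\; 0,$$
upgrading the convergence to strong in $V$.
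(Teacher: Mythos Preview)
The paper does not actually prove Theorem~\ref{th:convex}: it is stated as a known result from~\cite{NouyFalco} (and the earlier works~\cite{LBLM,CELgreedy}), so there is no in-paper proof to compare your proposal against.

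As a standalone sketch, your argument is broadly in line with the standard proofs in those references. The well-definedness part is fine. For the OGA, your use of the Euler equation $\langle \nabla\cE(u_n), u_n\rangle_V = 0$ together with $\nabla\cE(u_n)\rightharpoonup 0$ and (HE2) is a clean way to identify weak limits and upgrade to strong convergence. For the PGA, you correctly isolate the genuine difficulty --- controlling $\langle \nabla\cE(u_{n_k}), u_{n_k}\rangle_V$ when both factors converge only weakly --- and you explicitly defer it to~\cite{NouyFalco}; this is honest but means your proposal is not self-contained on that point. One minor remark: in your gain-per-iteration estimate, the Lipschitz constant $L_z$ depends on $z$ through the size of the bounded set needed to accommodate the optimal step $t^*$; you should make clear that this set can be chosen uniformly in $n$ (using boundedness of $(u_n)$ and of $(\nabla\cE(u_n))$) so that the telescoping argument goes through for each fixed $z$.
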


In the case when for all $v\in V$, $\cE(v):= \frac{1}{2}\|v\|_a^2 - \langle l, v\rangle_{V', V}$ for some $l\in V':= \cL(V, \R)$, 
we have the following lemma, proved in~\cite{Temlyakov}.
\begin{lemma}\label{lem:Figueroa}
Let $V$ be a separable Hilbert space, $\Sigma$ a dictionary of $V$, and $\cE:V\to \R$ defined by
$$
\forall v\in V, \quad \cE(v) := \frac{1}{2}\|v\|_a^2 - \langle l, v \rangle_{V', V}
$$
for some $l\in V'$. Then, for all $n\in\N^*$, a vector $z_n\in \Sigma$ solution of (\ref{eq:minconv}) or (\ref{eq:minconvorth}) satisfies
$$
\|z_n\|_a = \mathop{\sup}_{z \in \Sigma^*} \frac{\langle l, z \rangle_{V', V} - \langle u_{n-1}, z\rangle_a}{\|z\|_a}.
$$
In particular, for $n=1$, 
$$
\|z_1\|_a = \mathop{\sup}_{z \in \Sigma^*} \frac{\langle l, z \rangle_{V', V}}{\|z\|_a}.
$$
\end{lemma}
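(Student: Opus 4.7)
My plan is to exploit two features at once: (i) $\cE$ is a quadratic functional, so its restriction to any line is an explicit parabola that can be minimized by hand; and (ii) $\Sigma$ is a cone, so for each $z\in\Sigma^*$ the entire line $\{tz:\ t\in\R\}$ lies inside $\Sigma$. Combining these will turn the minimization over $\Sigma$ into a supremum over directions.

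First I fix $n$, set $w:=u_{n-1}$, and introduce the bounded linear form $\Phi(z):=\langle l, z\rangle_{V',V}-\langle w,z\rangle_a$. A direct expansion using $\langle\cdot,\cdot\rangle_a$ gives
\[
\cE(w+tz)-\cE(w)=\tfrac{t^2}{2}\|z\|_a^{2}-t\,\Phi(z)\qquad(t\in\R,\ z\in V).
\]
For any $z\in\Sigma^*$, this parabola in $t$ is minimized at $t^{*}(z)=\Phi(z)/\|z\|_a^{2}$ with minimum value $-\tfrac{1}{2}\Phi(z)^2/\|z\|_a^{2}$. Because the cone condition (H$\Sigma 1$) ensures $\{tz:t\in\R\}\subset\Sigma$, the global minimizer $z_n\in\Sigma$ provided by Lemma~\ref{lem:lemconvex} must coincide with the line minimum along its own ray: when $z_n\ne 0$ this forces $t^{*}(z_n)=1$, i.e.\ $\Phi(z_n)=\|z_n\|_a^{2}$, so in particular $\Phi(z_n)\ge 0$ and $\Phi(z_n)/\|z_n\|_a=\|z_n\|_a$.

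Next I compare $z_n$ with every other ray: for all $z\in\Sigma^*$,
\[
-\tfrac{1}{2}\frac{\Phi(z_n)^2}{\|z_n\|_a^{2}}=\cE(w+z_n)-\cE(w)\le -\tfrac{1}{2}\frac{\Phi(z)^2}{\|z\|_a^{2}},
\]
hence $|\Phi(z_n)|/\|z_n\|_a\ge |\Phi(z)|/\|z\|_a$. Since $\Sigma$ is symmetric ($z\in\Sigma\Leftrightarrow -z\in\Sigma$ by (H$\Sigma 1$)), the suprema of $|\Phi(z)|/\|z\|_a$ and of $\Phi(z)/\|z\|_a$ over $\Sigma^*$ coincide, and both are attained by $z_n$. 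Together with Step~1 this yields $\|z_n\|_a=\Phi(z_n)/\|z_n\|_a=\sup_{z\in\Sigma^*}\Phi(z)/\|z\|_a$, which is the claimed identity. The degenerate case $z_n=0$ corresponds precisely to $\Phi\equiv 0$ on $\Sigma$, where both sides vanish, and the $n=1$ formula is just the specialization $u_0=0$ (so $\Phi(z)=\langle l,z\rangle_{V',V}$). The same argument applies verbatim to the minimization step (\ref{eq:minconvorth}) of the OGA, since it only uses that $z_n$ minimizes $\cE(u_{n-1}+\cdot)$ over $\Sigma$, irrespective of how $u_{n-1}$ was produced. The only delicate point is this cone symmetry argument for dropping absolute values, but it is immediate from (H$\Sigma 1$).
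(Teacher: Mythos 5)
Your proof is correct. The paper itself does not prove Lemma~\ref{lem:Figueroa} (it is quoted from the cited reference of Temlyakov), and your argument is the standard one for this fact: restrict the quadratic functional to the rays $\{tz\}\subset\Sigma$ guaranteed by the cone property (H$\Sigma 1$), read off that optimality of $z_n$ along its own ray gives $\Phi(z_n)=\|z_n\|_a^2$, and compare the minimal values $-\tfrac12\Phi(z)^2/\|z\|_a^2$ across rays, using the symmetry $z\in\Sigma\Leftrightarrow -z\in\Sigma$ to drop the absolute value. You also correctly handle the degenerate case $z_n=0$ and observe that the identity applies verbatim to the selection step (\ref{eq:minconvorth}) of the OGA, since only the minimization property of $z_n$ over $\Sigma$ is used.
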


\section{Greedy algorithms for eigenvalue problems} \label{sec:mainsec}

In the rest of the article, we define and study two different greedy algorithms to compute an eigenpair associated to the lowest eigenvalue of the 
elliptic eigenvalue problem (\ref{eq:eigenvalue}).
 
The first one relies on the minimization of the Rayleigh quotient of $a(\cdot, \cdot)$ and is introduced in Section~\ref{sec:rayleigh}. The second one, presented in Section~\ref{sec:residual},
 is based on the use of a residual for problem (\ref{eq:eigenvalue}). We recall the algorithm introduced in~\cite{Chinesta-QC} in Section~\ref{sec:PEGA}. 
Orthogonal versions of these algorithms are defined in Section~\ref{sec:orth}. Section~\ref{sec:main} contains our main convergence results. The choice of a good initial guess for all these algorithms 
is discussed in Section~\ref{sec:initguess}. The proofs of the results stated in this section are postponed until Section~\ref{sec:proof}. 

\subsection{Two useful lemmas}\label{sec:preliminary}

For all $v\in V$, we denote by
$$
\cJ(v):= \left\{
\begin{array}{l}
 \frac{a(v,v)}{\|v\|^2} \; \mbox{ if }v\neq 0,\\
+\infty \; \mbox{ if }v=0,\\
\end{array}
\right .
$$
the Rayleigh quotient associated to (\ref{eq:eigenvalue}), and 
$$
\lambda_\Sigma  := \mathop{\inf}_{z\in \Sigma} \cJ(z) = \mathop{\inf}_{z \in \Sigma^*} \frac{a(z, z)}{\|z\|^2}. 
$$

Note that, since $\Sigma \subset V$, $\dps \lambda_\Sigma \geq \mu_1= \mathop{\inf}_{v\in V} \cJ(v)$.

\begin{lemma}\label{lem:leminit}
Let $w\in V$ such that $\|w\|=1$. The following two assertions are equivalent:
\begin{itemize}
 \item [(i)] $\forall z\in \Sigma, \quad \cJ(w+z) \geq \cJ(w)$;
\item [(ii)] $w$ is an eigenvector of the bilinear form $a(\cdot, \cdot)$ associated to an eigenvalue lower or equal than $\lambda_\Sigma$, i.e. there exists 
$\lambda_w \in \R$, such that $\lambda_w \leq \lambda_\Sigma$ and
$$
\forall v\in V, \; a(w,v) = \lambda_w \langle w, v \rangle.
$$
\end{itemize}
\end{lemma}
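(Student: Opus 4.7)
The plan is to reduce everything to a single algebraic identity and then exploit the cone and density properties of $\Sigma$.

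First I would record the key identity: if $\|w\|=1$ and $\lambda_w := a(w,w) = \cJ(w)$, then for any $z \in V$ with $w+z \neq 0$, a direct expansion gives
$$
a(w+z,w+z) - \lambda_w\|w+z\|^2 = a(z,z) - \lambda_w\|z\|^2 + 2\bigl(a(w,z) - \lambda_w\langle w,z\rangle\bigr),
$$
and dividing by $\|w+z\|^2$ yields an expression for $\cJ(w+z) - \lambda_w$. The case $w+z = 0$ is handled separately since $\cJ(0) = +\infty$. This identity is the workhorse of both implications.

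For (ii) $\Rightarrow$ (i), the eigenvector hypothesis kills the cross term $a(w,z) - \lambda_w\langle w,z\rangle$, so
$$
\cJ(w+z) - \lambda_w = \frac{a(z,z) - \lambda_w\|z\|^2}{\|w+z\|^2}
$$
whenever $w+z\neq 0$. Since every $z\in\Sigma^*$ satisfies $\cJ(z) \geq \lambda_\Sigma \geq \lambda_w$, the numerator is nonnegative, giving $\cJ(w+z)\geq\cJ(w)$. The cases $z=0$ and $w+z=0$ are trivial.

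For (i) $\Rightarrow$ (ii), I would fix $z \in \Sigma$ and use (H$\Sigma 1$) (cone property) to deduce that $tz\in\Sigma$ for every $t\in\R$, so the map $t\mapsto \cJ(w+tz)$ (smooth near $0$ since $\|w\|=1$) attains a minimum at $t=0$. Computing the derivative at $0$ yields the first-order condition $a(w,z) = \lambda_w \langle w,z\rangle$ for every $z\in\Sigma$. By linearity this extends to $\mathrm{Span}(\Sigma)$, and then by continuity of $a(w,\cdot)$ and $\langle w,\cdot\rangle$ together with (H$\Sigma 3$), it extends to all $v\in V$, so $w$ is an eigenvector with eigenvalue $\lambda_w$. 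With the cross term now vanishing, the identity above reduces to $\cJ(w+z) - \lambda_w = (a(z,z) - \lambda_w\|z\|^2)/\|w+z\|^2$; applying assumption (i) and scaling $z\in\Sigma^*$ by small $t$ (so that $w+tz\neq 0$) gives $a(z,z) - \lambda_w\|z\|^2 \geq 0$, i.e. $\cJ(z) \geq \lambda_w$ for all $z\in\Sigma^*$, whence $\lambda_w \leq \lambda_\Sigma$ by taking the infimum.

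I don't anticipate a genuine obstacle; the only mild nuisance is the possibility that $w+z$ or $w+tz$ vanishes, which is handled by the convention $\cJ(0) = +\infty$ and by restricting to sufficiently small $t$ in the differentiation step.
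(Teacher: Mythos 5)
Your proof is correct and follows essentially the same route as the paper's: expand the Rayleigh quotient of $w+z$, extract the first-order condition $a(w,z)=\lambda_w\langle w,z\rangle$ on $\Sigma$ via the cone property (the paper phrases this as an $\varepsilon\to 0$ limit rather than a derivative of $t\mapsto\cJ(w+tz)$, but it is the same computation), extend to $V$ by (H$\Sigma 3$), and read off $\lambda_w\leq\lambda_\Sigma$ from the remaining quadratic term. Nothing is missing.
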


\begin{lemma}\label{lem:lemma2}
 Let $w\in V \setminus \Sigma^*$. Then, the minimization problem
\begin{equation}\label{eq:minipb}
\mbox{ find }z_0\in \Sigma \mbox{ such that } z_0 \in \mathop{\mbox{\rm argmin}}_{z\in \Sigma} \cJ(w + z)
\end{equation}
has at least one solution. 
\end{lemma}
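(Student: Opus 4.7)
The plan is to use the direct method of the calculus of variations, adapted to the scale-invariance of $\cJ$. Denote $m := \inf_{z \in \Sigma} \cJ(w+z)$. One has $m \geq \mu_1 > -\infty$ since $\Sigma \subset V$, and $m < +\infty$ by taking $z = 0$ when $w \neq 0$, or any $z \in \Sigma^*$ (which is non-empty by (H$\Sigma 3$)) when $w = 0$. Choose a minimizing sequence $(z_n) \subset \Sigma$ with $\cJ(w+z_n) \to m$, noting that $w+z_n \neq 0$ throughout since $\cJ(0)=+\infty$.

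Next I would exploit the scale-invariance $\cJ(tu) = \cJ(u)$ to normalize: set $v_n := (w+z_n)/\|w+z_n\|$, so $\|v_n\|=1$ and $\cJ(v_n) = \cJ(w+z_n) \to m$. From (HA) one has $\gamma\|v_n\|_V^2 \leq a(v_n,v_n) + \nu\|v_n\|^2 = \cJ(v_n) + \nu$, so $(v_n)$ is bounded in $V$. By (HV), a subsequence satisfies $v_n \rightharpoonup v^*$ in $V$ and $v_n \to v^*$ in $H$, whence $\|v^*\| = 1$ (in particular $v^* \neq 0$), and weak lower semicontinuity of the equivalent norm $\|\cdot\|_a^2 = a(\cdot,\cdot) + \nu\|\cdot\|^2$ gives $\cJ(v^*) \leq m$.

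Then I would split on the behaviour of $\alpha_n := \|w+z_n\|$. In the \emph{bounded case}, extract $\alpha_n \to \alpha \geq 0$, so $z_n = \alpha_n v_n - w \rightharpoonup \alpha v^* - w =: z^*$ in $V$ with $z^* \in \Sigma$ by (H$\Sigma 2$); if $\alpha > 0$ then $w+z^* = \alpha v^* \neq 0$ and $\cJ(w+z^*) = \cJ(v^*) \leq m$, so $z^*$ is a minimizer, whereas if $\alpha = 0$ then $\|w+z_n\|_V = \alpha_n \|v_n\|_V \to 0$, so $z_n \to -w$ strongly in $V$, which by (H$\Sigma 2$) and (H$\Sigma 1$) forces $w \in \Sigma$; combined with $w \notin \Sigma^*$ this leaves only $w = 0$, and the problem reduces to $\inf_{z \in \Sigma^*}\cJ(z)$, handled by rescaling the minimizing sequence to $\|z_n\| = 1$ and repeating the compactness step. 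In the \emph{unbounded case} $\alpha_n \to +\infty$, one has $z_n/\alpha_n = v_n - w/\alpha_n \rightharpoonup v^*$ in $V$ with $z_n/\alpha_n \in \Sigma$ by the cone property, so $v^* \in \Sigma$; combining $\cJ(v^*) \leq m$, $\cJ(v^*) \geq \lambda_\Sigma$, and the bound $m \leq \lambda_\Sigma$ obtained by taking $z = tr$ for fixed $r \in \Sigma^*$ with $\|r\|=1$ and letting $t \to +\infty$, yields $m = \lambda_\Sigma = \cJ(v^*)$; a one-dimensional analysis of $t \mapsto \cJ(w+tv^*)$, using the constraint $\cJ(w+tv^*) \geq m$ for all $t \in \R$, then pins down a finite $t^*$ such that $z := t^* v^* \in \Sigma$ realizes the infimum.

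The hard part will be the unbounded subcase: because of scale-invariance, the minimizing sequence itself may escape in $V$-norm, so the limit of its normalization only supplies a direction $v^* \in \Sigma$, not an actual finite minimizer. Producing the latter requires exploiting the structural constraint that $\cJ(w + tv^*) \geq m$ for \emph{every} $t \in \R$, which forces the linear part of the numerator of $\cJ(w + tv^*) - m$ to vanish and thereby makes the one-dimensional subproblem tractable.
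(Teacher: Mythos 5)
Your overall architecture coincides with the paper's: direct method, normalization of $w+z_n$ in $H$, a case split on the behaviour of $\alpha_n=\|w+z_n\|$ (the paper uses the reciprocal $1/\|w+z_m\|$ as its parameter), and then weak closedness of $\Sigma$, the compact embedding (HV), and weak lower semicontinuity of $\|\cdot\|_a$. Your treatment of the case $\alpha_n\to\alpha>0$ is exactly the paper's, and your reduction of the case $\alpha_n\to 0$ to $w=0$ (via $z_n\to -w$ strongly in $V$, hence $w\in\Sigma$ by (H$\Sigma 1$)--(H$\Sigma 2$), hence $w=0$ since $w\notin\Sigma^*$) is a clean variant of the paper's argument by contradiction.

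The gap is in the unbounded case, which you correctly flag as the hard part but then close with a claim that is false. Write $\cJ(w+tv^*)-m=P(t)/\|w+tv^*\|^2$ with $P(t)=\bigl(a(w,w)-m\|w\|^2\bigr)+2t\bigl(a(w,v^*)-m\langle w,v^*\rangle\bigr)+t^2\bigl(a(v^*,v^*)-m\|v^*\|^2\bigr)$. Since $\cJ(v^*)=m$ and $\|v^*\|=1$, the quadratic coefficient vanishes, and $P\geq 0$ on $\R$ forces the linear coefficient to vanish too; but what survives is the constant $P\equiv\|w\|^2(\cJ(w)-m)\geq 0$, which need not be zero. If $\cJ(w)=m$ then $t^*=0$ works, i.e.\ $z_0=0$ was already a minimizer; if $\cJ(w)>m$ then $\cJ(w+tv^*)>m$ for every finite $t$ and no $t^*$ exists. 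Moreover the defect is not in your argument alone: under the stated hypotheses the infimum need not be attained at all in this regime. Take $V=H=\R^3$, $a$ diagonal with eigenvalues $\lambda_1<\lambda_2<\lambda_3$, $\Sigma=\R e_1\cup\R e_2\cup\R e_3$ (a legitimate dictionary) and $w=(1,1,1)/\sqrt{3}\notin\Sigma$. Then $\cJ(w+te_1)=(\lambda_1 s+\lambda_2+\lambda_3)/(s+2)$ with $s=(1+\sqrt{3}t)^2$ decreases strictly to $\lambda_1=\lambda_\Sigma$ as $s\to\infty$ without attaining it, while $\inf_t\cJ(w+te_2)=\min\bigl(\tfrac{\lambda_1+\lambda_3}{2},\lambda_2\bigr)>\lambda_1$ and $\inf_t\cJ(w+te_3)=\min\bigl(\tfrac{\lambda_1+\lambda_2}{2},\lambda_3\bigr)>\lambda_1$; so $\inf_{z\in\Sigma}\cJ(w+z)=\lambda_1$ is not attained. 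Your unbounded case therefore cannot be repaired as a matter of one-dimensional analysis.

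For perspective: the paper's written proof is also silent on this subcase --- it is hidden in the branch ``$(\alpha_m)$ bounded, reason as in the case $w=0$'', which only yields a minimizer when $\liminf_m 1/\|w+z_m\|>0$. In every use of the lemma in the paper one has either $w=0$ or $\cJ(w)<\lambda_\Sigma$, whence $m\leq\cJ(w+0)<\lambda_\Sigma$; since, as you show, an escaping minimizing sequence forces $m=\lambda_\Sigma$, the problematic case is then vacuous. Adding that hypothesis (or restricting to $m<\lambda_\Sigma$) makes your proof complete; without it, the statement itself fails for general dictionaries.
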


When $w\in \Sigma^*$, problem (\ref{eq:minipb}) may have no solution (see Example~\ref{ex:ex1}).

\subsection{Description of the algorithms}\label{sec:algo}

\subsubsection{Pure Rayleigh Greedy Algorithm}\label{sec:rayleigh}

The following algorithm, called hereafter the \itshape Pure Rayleigh Greedy Algorithm \normalfont (PRaGA) algorithm, is inspired from the PGA for convex minimization problems
 (see Section~\ref{sec:convex}). 

\medskip

\bfseries Pure Rayleigh Greedy Algorithm (PRaGA): \normalfont

\begin{itemize}
\item \bfseries Initialization: \normalfont choose an initial guess $u_0\in V$ such that $\|u_0\| = 1$ and such that $\lambda_0:= a(u_0,u_0) < \lambda_\Sigma$;
\item \bfseries Iterate on $n\geq 1$: \normalfont find $z_n\in \Sigma$ such that 
\begin{equation}\label{eq:minRay1}
z_n \in \mathop{\mbox{argmin}}_{z\in \Sigma} \cJ(u_{n-1} + z),
\end{equation}
and set $u_n  := \frac{u_{n-1} + z_n}{\|u_{n-1} + z_n\|}$ and $\lambda_n:=a(u_n, u_n)$.
\end{itemize}

\medskip

Let us point out that in our context, the functional $\cJ$ is not convex, so that the analysis presented for the PGA in Section~\ref{sec:convex} does not hold for the PRaGA.

\medskip

The choice of an initial guess $u_0\in V$ satisfying $\|u_0\| = 1$ and $a(u_0, u_0) \leq \lambda_\Sigma$ is discussed in Section~\ref{sec:initguess}. Let us already mention that for 
the PRaGA (unlike the two other algorithms, the PReGA and the PEGA, presented in the following sections), we require the additional condition that $a(u_0, u_0) < \lambda_\Sigma$ (the inequality is strict). 
We also discuss this point in Section~\ref{sec:initRay}.

\medskip

\begin{lemma}\label{lem:Radef}
Let $V$ and $H$ be separable Hilbert spaces satisfying (HV), $\Sigma$ a dictionary of $V$ and $a: V\times V\to \R$ a symmetric 
continuous bilinear form satisfying (HA). Then, all the iterations of the PRaGA algorithm are well-defined in the sense that for all $n\in\N^*$, there exists at least one solution to the minimization 
problem (\ref{eq:minRay1}). Besides, the sequence $(\lambda_n)_{n\in\N^*}$ is non-increasing. 
\end{lemma}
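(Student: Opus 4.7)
The plan is to prove both assertions simultaneously by induction on $n$, showing that at each step three properties hold: (i) $u_n$ is well-defined with $\|u_n\|=1$; (ii) $\lambda_n \leq \lambda_{n-1}$; and (iii) $\lambda_n < \lambda_\Sigma$, from which $u_n \notin \Sigma^*$ follows automatically. Property (iii) is the engine of the induction: it is exactly what is needed to apply Lemma~\ref{lem:lemma2} at the next step to obtain existence of $z_{n+1}$.

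First, I would establish the key observation that allows the induction to run: if $w \in V$ satisfies $\|w\|=1$ and $a(w,w) < \lambda_\Sigma$, then $w \notin \Sigma^*$. Indeed, if $w$ were in $\Sigma^*$, then $\cJ(w) = a(w,w) < \lambda_\Sigma = \inf_{z \in \Sigma^*} \cJ(z)$, a contradiction. This is precisely the reason the initialization assumes the \emph{strict} inequality $\lambda_0 < \lambda_\Sigma$.

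For the base case $n = 0$, the assumptions on $u_0$ immediately give $\|u_0\| = 1$ and $\lambda_0 < \lambda_\Sigma$, so by the observation above, $u_0 \notin \Sigma^*$. For the inductive step, assume properties (i)--(iii) hold at index $n-1$. Since $u_{n-1} \in V \setminus \Sigma^*$, Lemma~\ref{lem:lemma2} guarantees the existence of at least one minimizer $z_n \in \Sigma$ of (\ref{eq:minRay1}). Since $0 \in \Sigma$ (by (H$\Sigma 1$)),
\[
\cJ(u_{n-1} + z_n) \leq \cJ(u_{n-1} + 0) = \cJ(u_{n-1}) = \frac{a(u_{n-1},u_{n-1})}{\|u_{n-1}\|^2} = \lambda_{n-1} < +\infty.
\]
This finiteness forces $u_{n-1} + z_n \neq 0$, so $u_n = (u_{n-1}+z_n)/\|u_{n-1}+z_n\|$ is well-defined with $\|u_n\|=1$. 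Homogeneity of $\cJ$ then gives
\[
\lambda_n = a(u_n,u_n) = \cJ(u_n) = \cJ(u_{n-1} + z_n) \leq \lambda_{n-1} < \lambda_\Sigma,
\]
which proves (ii) and (iii) at step $n$, and by the preliminary observation yields $u_n \notin \Sigma^*$. This closes the induction.

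There is no real obstacle here: the whole argument hinges on correctly propagating the strict inequality $\lambda_n < \lambda_\Sigma$ through the iterations, which in turn ensures that the hypothesis $u_{n-1} \notin \Sigma^*$ of Lemma~\ref{lem:lemma2} remains satisfied at every step. The only subtle point worth explicit mention in the write-up is why the strict inequality in the initialization (as opposed to the non-strict one used for the PReGA and PEGA) is genuinely needed, and this will be flagged and discussed in Section~\ref{sec:initRay} as announced by the authors.
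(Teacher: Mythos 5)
Your proof is correct and follows essentially the same route as the paper's: an induction propagating the strict inequality $\lambda_{n-1} < \lambda_\Sigma$, which guarantees $u_{n-1} \notin \Sigma^*$ so that Lemma~\ref{lem:lemma2} yields existence of $z_n$, with monotonicity coming from testing against $z=0 \in \Sigma$. Your explicit justification that $u_{n-1}+z_n \neq 0$ (via the finiteness of $\cJ(u_{n-1}+z_n)$) is a small detail the paper leaves implicit, but the argument is the same.
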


\begin{proof}
Lemma~\ref{lem:Radef} can be proved reasoning by induction. For $n=1$, since $\|u_0\| =1 $ and $a(u_0, u_0) < \lambda_\Sigma = 
\mathop{\inf}_{z\in \Sigma^*} \frac{a(z,z)}{\|z\|^2}$, necessarily $u_0 \notin \Sigma$. Thus, from Lemma~\ref{lem:lemma2}, (\ref{eq:minRay1}) has at least one solution $z_1\in \Sigma$. 
Besides, if $u_1:= \frac{u_0 + z_1}{\|u_0 + z_1\|}$, we have $\|u_1\| = 1$ and $\lambda_1 = a(u_1,u_1) \leq \lambda_0 = a(u_0,u_0) < \lambda_{\Sigma}$. 
Resaoning by induction, for all $n\in \N^*$, it is clear that $\|u_{n-1}\| = 1$ and $\lambda_{n-1} = a(u_{n-1}, u_{n-1}) < \lambda_0$. Thus, $u_{n-1} \notin \Sigma$ and using the same kind of arguments as before, 
(namely Lemma~\ref{lem:lemma2}), there exists at least one solution $z_n$ to the minimization problem (\ref{eq:minRay1}) and $u_n :=\frac{u_{n-1}+z_n}{\|u_{n-1} + z_n\|}$ satisfies
 $\lambda_n = a(u_n, _n) \leq \lambda_{n-1} < \lambda_{\Sigma}$. Thus, all the iterations of the PRaGA are well-defined, and the sequence $(\lambda_n)_{n\in\N}$ is non-increasing. 
\end{proof}

\subsubsection{Pure Residual Greedy Algorithm}\label{sec:residual}

The \itshape Pure Residual Greedy Algorithm \normalfont (PReGA) we propose is based on the use of a residual for problem (\ref{eq:eigenvalue}). 

\medskip

\bfseries Pure Residual Greedy Algorithm (PReGA): \normalfont

\begin{itemize}
\item \bfseries Initialization: \normalfont choose an initial guess $u_0\in V$ such that $\|u_0\| = 1$ and let $\lambda_0:= a(u_0,u_0)$;
\item \bfseries Iterate on $n\geq 1$:  \normalfont find $z_n\in \Sigma$ such that
\begin{equation}\label{eq:algo3}
z_n \in \mathop{\mbox{argmin}}_{z\in \Sigma} \frac{1}{2}\|u_{n-1} + z\|_a^2 - (\lambda_{n-1} + \nu)\langle u_{n-1}, z\rangle,
\end{equation}
and set $u_n := \frac{u_{n-1} + z_n}{\|u_{n-1} + z_n\|}$ and $\lambda_n := a(u_n, u_n)$. 
\end{itemize}

\medskip

The denomination \itshape Residual \normalfont can be justified as follows: it is easy to check that for all $n\in \N^*$, 
the minimization problem (\ref{eq:algo3}) is equivalent to the minimization problem
\begin{equation}\label{eq:resform}
 \mbox{find }z_n\in \Sigma\mbox{ such that }z_n\in \mathop{\mbox{argmin}}_{z\in \Sigma} \frac{1}{2}\|R_{n-1} -z \|_a^2,
\end{equation}
where $R_{n-1}\in V$ is the Riesz representant in $V$ of the linear form $l_{n-1}: v\in V \mapsto \lambda_{n-1}\langle u_{n-1}, v \rangle - a(u_{n-1}, v)$. In other words, $R_{n-1}$ is the unique element in $V$ such that
$$
\forall v\in V, \quad \langle R_{n-1} , v \rangle_a = \lambda_{n-1}\langle u_{n-1}, v \rangle - a(u_{n-1}, v).
$$
The linear form $l_{n-1}$ can indeed be seen as a residual for (\ref{eq:eigenvalue}) since $l_{n-1} = 0$ if and only if $\lambda_{n-1}$ is an eigenvalue of $a(\cdot, \cdot)$ and $u_{n-1}$ an associated 
$H$-normalized eigenvector. 

\medskip

Let us point out that, in order to carry out the PReGA in practice, one needs to know the value of a constant $\nu$ ensuring (HA), whereas this is not needed for the PRaGA, neither for the 
algorithm (PEGA) introduced in~\cite{Chinesta-QC} and considered in the next section. We will discuss in more details about 
the practical implementation of these three algorithms in the case when $\Sigma$ is the set of rank-1 tensor product functions in Section~\ref{sec:resnum}.

\medskip

\begin{lemma}\label{lem:Redef}
Let $V$ and $H$ be separable Hilbert spaces such that the embedding $V\hookrightarrow H$ is dense, $\Sigma$ a dictionary of $V$ and $a: V\times V\to \R$ a symmetric 
continuous bilinear form satisfying (HA). Then, all the iterations of the PReGA algorithm are well-defined in the sense that for all $n\in\N^*$, there exists at least one solution to the minimization 
problem (\ref{eq:algo3}).   
\end{lemma}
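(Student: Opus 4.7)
The plan is a straightforward induction on $n$ that reduces each PReGA subproblem to the convex minimization framework already handled by Lemma~\ref{lem:lemconvex}. I will proceed by induction, assuming that $u_{n-1}\in V$ with $\|u_{n-1}\|=1$ and $\lambda_{n-1}=a(u_{n-1},u_{n-1})$ are already well defined (the base case $n=1$ is given by the initialization).

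The key observation is the reformulation already noted in the paper just after (\ref{eq:algo3}): expanding the objective in (\ref{eq:algo3}) and discarding terms independent of $z$ shows it is equivalent to minimizing
$$
\Phi_{n-1}(z):=\frac{1}{2}\|z\|_a^2-\langle \ell_{n-1},z\rangle_{V',V},\qquad \ell_{n-1}(v):=\lambda_{n-1}\langle u_{n-1},v\rangle-a(u_{n-1},v),
$$
over $z\in\Sigma$. Since $a(\cdot,\cdot)$ is continuous on $V\times V$ and the dense embedding $V\hookrightarrow H$ is automatically continuous (giving $\|\cdot\|\le C\|\cdot\|_V$), the linear form $\ell_{n-1}$ is continuous on $V$, i.e. $\ell_{n-1}\in V'$. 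Moreover, because $\|\cdot\|_a$ is equivalent to $\|\cdot\|_V$, the functional $\Phi_{n-1}$ is a quadratic plus linear form whose $V$-gradient is globally Lipschitz (satisfying (HE1) on all of $V$), and ellipticity (HE2) holds with $s=2$ since
$$
\langle \nabla\Phi_{n-1}(v)-\nabla\Phi_{n-1}(w),v-w\rangle_V=\|v-w\|_a^2\ge C\|v-w\|_V^2\ge C'\|v-w\|^2.
$$
Applying Lemma~\ref{lem:lemconvex} with $w=0$ (and $\cE:=\Phi_{n-1}$) yields at least one minimizer $z_n\in\Sigma$ of (\ref{eq:algo3}), which is exactly the assertion of the lemma.

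To close the induction it remains to check that $u_{n-1}+z_n\ne 0$, so that $u_n$ and $\lambda_n$ are well defined. Denoting by $F_n(z)$ the objective in (\ref{eq:algo3}) and using $\|u_{n-1}\|=1$, we compute $F_n(0)=\frac{1}{2}\|u_{n-1}\|_a^2=\frac{1}{2}(\lambda_{n-1}+\nu)$, while (since $\Sigma$ is a cone, $-u_{n-1}$ is an admissible candidate whenever it lies in $\Sigma$) $F_n(-u_{n-1})=\lambda_{n-1}+\nu$. Assumption (HA) together with $\|u_{n-1}\|=1$ and $u_{n-1}\ne 0$ forces $\lambda_{n-1}+\nu\ge\gamma\|u_{n-1}\|_V^2>0$, so $F_n(0)<F_n(-u_{n-1})$; no minimizer can coincide with $-u_{n-1}$, hence $u_{n-1}+z_n\ne 0$ and the induction propagates.

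There is no real obstacle: once the subproblem is rewritten in the Riesz form $\frac{1}{2}\|R_{n-1}-z\|_a^2$ mentioned in the paper, existence is a direct invocation of the already-established convex greedy lemma. The only point requiring a short side-check is the non-degeneracy $u_{n-1}+z_n\ne 0$, which is handled by the simple energy comparison above using (HA).
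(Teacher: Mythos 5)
Your proof is correct and takes essentially the same route as the paper: the objective in (\ref{eq:algo3}) is, up to an additive constant, a quadratic-plus-linear functional in $z$ satisfying (HE1) and (HE2), so Lemma~\ref{lem:lemconvex} yields a minimizer $z_n\in\Sigma$. Your additional verification that $u_{n-1}+z_n\neq 0$ (via the comparison $F_n(0)<F_n(-u_{n-1})$ using (HA)) is a correct extra check that the paper's own proof of this lemma does not spell out.
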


\begin{proof}
Lemma~\ref{lem:lemconvex} implies that for all $n\in\N^*$, there always exists at least one solution to the minimization problem (\ref{eq:algo3}), since for all $n\in\N^*$, the functional 
$$
\cE_n: \left\{ \begin{array}{ccc}
                V & \to & \R\\
                v & \mapsto & \cE_n(v):= \frac{1}{2}\|u_{n-1} + v\|_a^2 - (\lambda_{n-1} + \nu)\langle u_{n-1}, v\rangle,\\
               \end{array}
\right .
$$
satisfies (HE1) and (HE2). Thus, all the iterations of the  PReGA are well-defined. 
\end{proof}

\subsubsection{Pure Explicit Greedy Algorithm}\label{sec:PEGA}

The above two algorithms are new, at least to our knowledge. 
In this section, we describe the algorithm already proposed in~\cite{Chinesta-QC}, which we call in the rest of the article 
the \itshape Pure Explicit Greedy Algorithm \normalfont (PEGA). 

Unlike the above two algorithms, the PEGA is not defined for general dictionaries $\Sigma$ satisfying (H$\Sigma$1), (H$\Sigma$2) and (H$\Sigma$3). 
We need to assume in addition that $\Sigma$ is an embedded manifold in $V$. In this case, for all $z\in \Sigma$, we denote by $T_\Sigma(z)$ the tangent subspace to $\Sigma$ at the point $z$ in $V$. 

\medskip

Let us point out that, if $\Sigma$ is an embedded manifold in $V$, for all $n\in \N^*$, the Euler equations associated to the minimization problems
 (\ref{eq:minRay1}) and (\ref{eq:algo3}) respectively read: 
\begin{equation}\label{eq:Eulerrayleigh1}
\forall \delta z\in T_\Sigma(z_n), \quad a\left( u_{n-1} + z_n, \delta z\right) = \lambda_n \langle u_{n-1} + z_n, \delta z\rangle,
\end{equation}
and
\begin{equation}\label{eq:Eulerres1}
\forall \delta z \in T_\Sigma(z_n), \quad a\left( u_{n-1} + z_n, \delta z\right) + \nu \langle z_n, \delta z\rangle  = \lambda_{n-1} \langle u_{n-1} , \delta z\rangle.
\end{equation} 

\medskip

The PEGA consists in solving at each iteration $n\in\N^*$ of the greedy algorithm the following equation, which is of a similar form as the Euler equations (\ref{eq:Eulerrayleigh1}) and 
(\ref{eq:Eulerres1}) above, 
\begin{equation}\label{eq:EulerExp1}
\forall \delta z \in T_\Sigma(z_n), \quad  a\left( u_{n-1} + z_n , \delta z\right)  = \lambda_{n-1} \langle u_{n-1} + z_n, \delta z \rangle.
\end{equation}
More precisely, the PEGA algorithm reads:

\medskip

\bfseries Pure Explicit Greedy Algorithm (PEGA): \normalfont

\medskip 

\begin{itemize}
 \item \bfseries Initialization: \normalfont choose an initial guess $u_0\in V$ such that $\|u_0\| = 1$ and let $\lambda_0:= a(u_0,u_0)$;
\item \bfseries Iterate for $n\geq 1$: \normalfont find $z_n\in \Sigma$ such that
\begin{equation}\label{eq:Eulerexp}
\forall \delta z \in T_\Sigma(z_n), \quad  a\left( u_{n-1} + z_n , \delta z\right) - \lambda_{n-1} \langle u_{n-1} + z_n, \delta z \rangle = 0,
\end{equation}
and set $u_n := \frac{u_{n-1} + z_n}{\|u_{n-1} + z_n\|}$ and $\lambda_n := a(u_n, u_n)$. 
\end{itemize}

Notice that (\ref{eq:Eulerexp}) is very similar to (\ref{eq:Eulerrayleigh1}) except that $\lambda_{n-1}$ is used instead of $\lambda_n$. 
It can be seen as an \itshape explicit \normalfont version of the PRaGA, hence the name \itshape Pure Explicit 
Greedy Algorithm \normalfont.

\medskip

Note that it is not clear whether there always exists a solution $z_n$ to (\ref{eq:Eulerexp}), since (\ref{eq:Eulerexp}) does not derive from a minimization problem, unlike the other two 
algorithms. We were unable to prove convergence results for the PEGA.

\subsubsection{Orthogonal algorithms}\label{sec:orth}

We introduce here slightly modified versions of the PRaGA, PReGA and PEGA, inspired from the OGA for convex minimization 
problems (see Section~\ref{sec:convex}). 

\medskip

\bfseries Orthogonal (Rayleigh, Residual or Explicit) Greedy Algorithm (ORaGA, OReGA and OEGA): \normalfont

\begin{itemize}
 \item \bfseries Initialization: \normalfont choose an initial guess $u_0\in V$ such that $\|u_0\| = 1$ and let $\lambda_0:= a(u_0,u_0)$. For the ORaGA, we need to assume that 
$\lambda_0:= a(u_0,u_0) < \lambda_\Sigma$. 
\item \bfseries Iterate on $n\geq 1$: \normalfont 
\begin{itemize}
\item for the ORaGA: find $z_n\in \Sigma$ satisfying (\ref{eq:minRay1});
\item for the OReGA: find $z_n\in \Sigma$ satisfying (\ref{eq:algo3});  
\item for the OEGA: find $z_n\in \Sigma$ satisfying (\ref{eq:Eulerexp});  
\end{itemize}
find $\left(c_0^{(n)}, \ldots, c_n^{(n)} \right) \in \R^{n+1}$ such that
\begin{equation}\label{eq:orthopt}
\left( c_0^{(n)}, \ldots, c_n^{(n)}\right) \in \mathop{\rm argmin}_{(c_0, \ldots, c_n) \in \R^{n+1}} \cJ\left(c_0 u_0 + c_1 z_1+ \cdots + c_nz_n\right),
\end{equation}
and set  $u_n:= \frac{c_0^{(n)} u_0 + c_1^{(n)} z_1 + \cdots + c_n^{(n)} z_n}{\|c_0^{(n)} u_0 + c_1^{(n)} z_1 + \cdots + c_n^{(n)} z_n\|}$; 
if $\left\langle u_{n-1}, u_n \right\rangle \leq 0$, set $u_n:=-u_n$; set $\lambda_n:= a(u_n, u_n)$.
\end{itemize}

Let us point out that the original algorithm proposed in~\cite{Chinesta-QC} is the OEGA. Besides, for the three algorithms and all $n\in\N^*$, there always exists at least one solution to 
the minimization problems (\ref{eq:orthopt}).

The orthogonal versions of the greedy algorithms can be easily implemented from the pure versions: at any iteration $n\in\N^*$, only an additional step is performed, which 
consists in choosing an approximate eigenvector $u_n$ as a linear combination of the elements $u_0, z_1, \ldots, z_n$ minimizing the Rayleigh quotient 
associated to the bilinear form $a(\cdot, \cdot)$. Since $u_n$ is called to be the approximation of an eigenvector associated to the lowest eigenvalue of $a(\cdot, \cdot)$, which is a minimizer 
of the Rayleigh quotient on the Hilbert space $V$, this additional step is a natural extension of the OGA.   

\subsection{Convergence results}\label{sec:main}

\subsubsection{The infinite-dimensional case}

\begin{theorem}\label{th:main}
Let $V$ and $H$ be separable Hilbert spaces satisfying (HV), $\Sigma$ a dictionary of $V$ and $a: V\times V\to \R$ a symmetric 
continuous bilinear form satisfying (HA). The following properties hold for 
the PRaGA, ORaGA, PReGA and OReGA:
\begin{enumerate}
\item All the iterations of the algorithms are well-defined.
\item The sequence $(\lambda_n)_{n\in\N}$ is non-increasing and converges towards a limit $\lambda$ 
which is an eigenvalue of $a(\cdot,\cdot)$ for the scalar product $\langle \cdot, \cdot\rangle$.
\item The sequence $(u_n)_{n\in\N}$ is bounded in $V$ and any subsequence of $(u_n)_{n\in\N}$ which weakly converges in 
$V$ also  strongly converges in $V$ towards an $H$-normalized eigenvector associated with $\lambda$. This implies in particular that
$$
d_a(u_n, F_\lambda):= \mathop{\inf}_{w\in F_\lambda} \|w-u_n\|_a \mathop{\longrightarrow}_{n\to\infty} 0,
$$   
where $F_\lambda$ denotes the set of the $H$-normalized eigenvectors of $a(\cdot, \cdot)$ associated with $\lambda$.
\item If $\lambda$ is a simple eigenvalue, then there exists an $H$-normalized eigenvector $w_\lambda$ associated with $\lambda$ such that the whole sequence 
$(u_n)_{n\in\N}$ converges to $w_\lambda$ strongly in $V$.
\end{enumerate}
\end{theorem}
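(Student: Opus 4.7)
For the pure algorithms, Lemmas~\ref{lem:Radef} and~\ref{lem:Redef} already give well-definedness of $z_n$; for the orthogonalized variants the extra step~(\ref{eq:orthopt}) reduces to minimizing $\cJ$ over a finite-dimensional subspace (equivalently, to a small generalized eigenproblem), which always admits a minimizer. The sequence $(\lambda_n)$ is non-increasing: for PRaGA take $z=0$ in~(\ref{eq:minRay1}); for PReGA, inserting $z=0$ in~(\ref{eq:algo3}) and expanding produces $a(u_{n-1},z_n)-\lambda_{n-1}\langle u_{n-1},z_n\rangle + \tfrac12\|z_n\|_a^2 \le 0$, from which a direct computation yields $(\lambda_{n-1}-\lambda_n)\|u_{n-1}+z_n\|^2 \ge (\nu+\lambda_{n-1})\|z_n\|^2\ge 0$; for the orthogonal versions, $u_{n-1}$ lies in the span over which one optimizes, so $\lambda_n \le \cJ(u_{n-1}) = \lambda_{n-1}$. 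Since $\lambda_n \ge \mu_1 > -\nu$, the sequence converges to some limit $\lambda$, and (HA) yields $\gamma\|u_n\|_V^2 \le \lambda_n+\nu \le \lambda_0+\nu$, so $(u_n)$ is bounded in $V$.

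\textbf{The key decay $\|z_n\|_V\to 0$.} For PRaGA, differentiating $t\mapsto \cJ(u_{n-1}+tz_n)$ at $t=1$ gives $a(u_{n-1}+z_n,z_n) = \lambda_n \langle u_{n-1}+z_n,z_n\rangle$; plugging this into the expansion of $\lambda_n-\lambda_{n-1}$ yields the clean identity
$$\lambda_{n-1}-\lambda_n = a(z_n,z_n) - \lambda_n\|z_n\|^2 = (\cJ(z_n)-\lambda_n)\|z_n\|^2 \ge (\lambda_\Sigma - \lambda_0)\|z_n\|^2,$$
using $\cJ(z_n)\ge \lambda_\Sigma$ and $\lambda_n\le \lambda_0 < \lambda_\Sigma$. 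Summability of $(\lambda_{n-1}-\lambda_n)$ forces $\|z_n\|\to 0$, and then $\|z_n\|_a^2 = (\lambda_{n-1}-\lambda_n)+(\nu+\lambda_n)\|z_n\|^2\to 0$ gives $\|z_n\|_V\to 0$. For PReGA, rewriting (\ref{eq:algo3}) as the minimization of $\tfrac12\|z\|_a^2 - l_{n-1}(z)$ over $\Sigma$, with $l_{n-1}(v):=\lambda_{n-1}\langle u_{n-1},v\rangle - a(u_{n-1},v)$, Lemma~\ref{lem:Figueroa} gives the exact identity $l_{n-1}(z_n) = \|z_n\|_a^2$; reinserting this into the same expansion sharpens the decay to
$$(\lambda_{n-1}-\lambda_n)\|u_{n-1}+z_n\|^2 = a(z_n,z_n)+(2\nu+\lambda_{n-1})\|z_n\|^2 \ge \gamma\|z_n\|_V^2 + (\nu+\mu_1)\|z_n\|^2,$$
and the rough bound $\|z_n\|_a^2 = l_{n-1}(z_n) \le C\|z_n\|_a$ (using $V$-boundedness of $u_{n-1}$) keeps $\|u_{n-1}+z_n\|$ bounded, so again $\|z_n\|_V\to 0$.

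\textbf{Weak limits are $H$-normalized eigenvectors.} Fix a subsequence $u_{n_k}\wlim u_\infty$ in $V$; by (HV), $u_{n_k}\to u_\infty$ in $H$, so $\|u_\infty\|=1$. Since $\|z_n\|_V\to 0$ and $\|u_{n-1}+z_n\|\to 1$, one has $u_n-u_{n-1}\to 0$ in $V$, hence $u_{n_k-1}\wlim u_\infty$ as well. For PRaGA, the inequality $\cJ(u_{n-1}+tz)\ge \lambda_n$ (valid for every $t\in\R$ and $z\in\Sigma$ by the cone property) rewrites as the quadratic-in-$t$ nonnegativity
$$t^2[a(z,z)-\lambda_n\|z\|^2] + 2t[a(u_{n-1},z)-\lambda_n\langle u_{n-1},z\rangle] + (\lambda_{n-1}-\lambda_n) \ge 0,$$
whose leading coefficient is nonnegative since $\cJ(z)\ge \lambda_\Sigma\ge \lambda_n$; the resulting discriminant bound $[a(u_{n-1},z)-\lambda_n\langle u_{n-1},z\rangle]^2 \le (\lambda_{n-1}-\lambda_n)[a(z,z)-\lambda_n\|z\|^2]$ passes to the limit along $n_k$ (RHS~$\to 0$, LHS~$\to [a(u_\infty,z)-\lambda\langle u_\infty,z\rangle]^2$), yielding $a(u_\infty,z)=\lambda\langle u_\infty,z\rangle$ for every $z\in\Sigma$; assumption~(H$\Sigma 3$) extends the identity to $V$. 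For PReGA, Lemma~\ref{lem:Figueroa} gives $\|z_n\|_a = \sup_{\phi\in\Sigma^*} l_{n-1}(\phi)/\|\phi\|_a \to 0$, so $l_{n-1}(\phi)\to 0$ for every $\phi\in\Sigma$ (using $\pm\phi\in\Sigma$); passing to the limit, $\lambda\langle u_\infty,\phi\rangle - a(u_\infty,\phi) = 0$ on $\Sigma$, hence on $V$. In either case $u_\infty\in F_\lambda$, so $\lambda$ is an eigenvalue. Strong $V$-convergence follows from $\|u_{n_k}\|_a^2 = \lambda_{n_k}+\nu \to \lambda+\nu = (\lambda+\nu)\|u_\infty\|^2 = \|u_\infty\|_a^2$ combined with weak convergence in the Hilbert space $(V,\langle\cdot,\cdot\rangle_a)$. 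A standard subsequence contradiction then gives $d_a(u_n,F_\lambda)\to 0$.

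\textbf{Simple case, orthogonal variants, main obstacle.} If $\lambda$ is simple, $F_\lambda=\{\pm w_\lambda\}$ consists of two points at positive $V$-distance; since $\|u_n-u_{n-1}\|_V\to 0$ and $d_a(u_n,F_\lambda)\to 0$, the sequence is eventually trapped near exactly one of them, giving convergence of the whole sequence. For the orthogonalized algorithms, step (\ref{eq:orthopt}) only lowers $\lambda_n$ further while $z_n$ is still chosen by the corresponding pure rule, so all the identities and estimates above carry over verbatim (one checks that $\lambda_0<\lambda_\Sigma$ is preserved in ORaGA by monotonicity). The main obstacle throughout is that the bilinear form $a$ is not weakly continuous on $V$, so a naive passage to the limit in the Euler equations for $z_n$ fails; the resolution, both via the discriminant bound for PRaGA and via the dual representation of Lemma~\ref{lem:Figueroa} for PReGA, is to convert the optimality of $z_n$ into a statement about a linear form on $V$, which \emph{is} weakly continuous.
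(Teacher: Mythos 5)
Your treatment of the pure algorithms (PRaGA, PReGA) is correct and, while it follows the same overall architecture as the paper's proof (monotonicity, summability of $\|z_n\|_a^2$, extraction of a weakly convergent subsequence, identification of the limit via the optimality of $z_n$, norm-convergence upgrade, subsequence argument for $d_a$), several key steps are genuinely different and cleaner. For the PRaGA decay, the paper works with the normalized increments $\zt_n = z_n/\|u_{n-1}+z_n\|$ through a chain of inequalities whose coefficient in front of $\|\zt_n\|^2$ is only positive for $n$ large; your exact identity $\lambda_{n-1}-\lambda_n = (\cJ(z_n)-\lambda_n)\|z_n\|^2$, read off from the Euler equation, gives summability immediately with the uniform constant $\lambda_\Sigma-\lambda_0>0$. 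For identifying the weak limit, the paper expands $\cJ(u_{n-1}+z)\ge\cJ(u_{n-1}+z_n)$ in full, passes to the limit, and then scales $z\mapsto\varepsilon z$; your discriminant bound (PRaGA) and the dual-norm identity $\|z_n\|_a=\|l_{n-1}\|_*$ (PReGA) reach the same conclusion more directly, and you correctly isolate the reason both work: optimality is converted into a statement about the weakly continuous linear form $v\mapsto a(v,z)-\lambda\langle v,z\rangle$. One small misattribution: the identity $l_{n-1}(z_n)=\|z_n\|_a^2$ is the Euler equation (Lemma~\ref{lem:ELresidual}), not Lemma~\ref{lem:Figueroa}; the latter is the sup formula you use correctly afterwards.

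The one genuine gap is the orthogonal variants. The claim that ``all the identities and estimates above carry over verbatim'' is not true as stated: for the ORaGA and OReGA, $u_n$ is \emph{not} $(u_{n-1}+z_n)/\|u_{n-1}+z_n\|$, so in particular $\lambda_{n-1}-\lambda_n=(\cJ(z_n)-\lambda_n)\|z_n\|^2$ fails. The repair — which is what the paper does — is to introduce the intermediate iterate $\ut_n:=(u_{n-1}+z_n)/\|u_{n-1}+z_n\|$ and $\widetilde{\lambda}_n:=a(\ut_n,\ut_n)$: your identities hold with $\lambda_n$ replaced by $\widetilde{\lambda}_n$, and the inequalities $\lambda_n\le\widetilde{\lambda}_n\le\lambda_{n-1}$ transfer summability to the telescoping series $\sum(\lambda_{n-1}-\lambda_n)$. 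Two downstream consequences need attention: (i) $u_n-u_{n-1}\to0$ in $V$ is not established for the orthogonal variants, so the discriminant bound should be applied at index $n_k+1$ (so that it involves $u_{n_k}$ and $\lambda_{n_k}$ directly) rather than at $n_k$; (ii) for the same reason, the trapping argument in the simple-eigenvalue case cannot rely on $\|u_n-u_{n-1}\|_V\to0$ and must instead invoke the sign normalization $\langle u_n,u_{n-1}\rangle\ge0$ built into the definition of the orthogonal algorithms — which is exactly why that condition appears there.
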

It may happen that $\lambda > \mu_1$, if the initial guess $u_0$ is not properly chosen. 
This point is discussed in Section~\ref{sec:initguess}. If $\lambda$ is degenerate, it is not clear whether the whole sequence $(u_n)_{n\in\N}$ converges.
 We will see however in Section~\ref{sec:finited} that it is always the case in finite dimension, at least for the pure versions of these algorithms.

The proof of Theorem~\ref{th:main} is given in Section~\ref{sec:proofRayleigh} for the PRaGA, in Section~\ref{sec:proofResidual} for the PReGA, and in Section~\ref{sec:prooforth} 
for their orthogonal versions.

\medskip

In addition, for the PReGA and the OReGA, we can prove similar convergence results without assuming that the Hilbert space $V$ is compactly embedded in $H$, provided that the self-adjoint operator 
$A$ associated with the quadratic form $a(\cdot, \cdot)$ has at least one eigenvalue below the minimum of its essential spectrum.

\begin{proposition}\label{prop:noHV}
Let $V$ and $H$ be separable Hilbert spaces such that the embedding $V\hookrightarrow H$ is dense (but not necessarily compact), $\Sigma$ a dictionary of $V$, $a: V\times V\to \R$ a symmetric 
continuous bilinear form satisfying (HA), and $A$ the self-adjoint operator on $H$ associated to $a(\cdot, \cdot)$. Let 
us assume also that $\min \sigma(A) < \min \sigma_{\rm ess}(A)$, where $\sigma(A)$ and $\sigma_{\rm ess}(A)$ respectively denote the spectrum and the essential spectrum of $A$, and that the initial 
guess $u_0$ satisfies $\min \sigma(A) \leq \lambda_0:=a(u_0,u_0) < \min \sigma_{\rm ess}(A)$. Then, the following properties hold for the PReGA and the OReGA:
\begin{enumerate}
\item All the iterations of the algorithms are well-defined.
\item The sequence $(\lambda_n)_{n\in\N}$ is non-increasing and converges towards a limit $\lambda$ 
which is an eigenvalue of $a(\cdot,\cdot)$ for the scalar product $\langle \cdot, \cdot\rangle$ such that $\lambda < \min \sigma_{\rm ess}(A)$.
\item The sequence $(u_n)_{n\in\N}$ is bounded in $V$ and any subsequence of $(u_n)_{n\in\N}$ which weakly converges in 
$V$ also  strongly converges in $V$ towards an $H$-normalized eigenvector associated with $\lambda$. This implies in particular that
$$
d_a(u_n, F_\lambda):= \mathop{\inf}_{w\in F_\lambda} \|w-u_n\|_a \mathop{\longrightarrow}_{n\to\infty} 0,
$$   
where $F_\lambda$ denotes the set of $H$-normalized eigenvectors of $a(\cdot, \cdot)$ associated with $\lambda$.
\item If $\lambda$ is a simple eigenvalue, then there exists an $H$-normalized eigenvector $w_\lambda$ associated with $\lambda$ such that the whole sequence 
$(u_n)_{n\in\N}$ converges to $w_\lambda$ strongly in $V$.
\end{enumerate}
\end{proposition}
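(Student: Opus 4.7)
My plan is to follow the proof of Theorem~\ref{th:main} for the PReGA and OReGA that will be given in Section~\ref{sec:proofResidual}, identify the single step that invokes the compactness hypothesis (HV), and replace it by a spectral localization argument exploiting the spectral gap $\lambda_0<\min\sigma_{\rm ess}(A)$. Compactness of $V\hookrightarrow H$ enters only to upgrade weak convergence in $V$ to strong convergence in $H$, and hence to show that a weak $V$-limit of the iterates is nonzero, $H$-normalized, and an eigenvector of $a(\cdot,\cdot)$.

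First, I would check that the compactness-free steps carry over verbatim. Lemma~\ref{lem:Redef} yields well-definedness of every iteration using only density of $V\hookrightarrow H$ and the convex structure of (\ref{eq:algo3}). Comparing $\cE_n(z_n)$ with $\cE_n(0)$ and using the normalization $u_n=(u_{n-1}+z_n)/\|u_{n-1}+z_n\|$ shows $\lambda_n\leq\lambda_{n-1}$, so that $(\lambda_n)$ decreases to some $\lambda\leq\lambda_0<\min\sigma_{\rm ess}(A)$. The bound $\gamma\|u_n\|_V^2\leq\lambda_n+\nu\leq\lambda_0+\nu$ coming from (HA) and $\|u_n\|=1$ shows that $(u_n)$ is bounded in $V$. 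The telescoping identity derived from the PReGA update, combined with Lemma~\ref{lem:Figueroa} applied to the equivalent formulation (\ref{eq:resform}) and the density of $\mbox{Span}(\Sigma)$ in $V$, forces the residual $R_n$ to tend to $0$ strongly in $V$.

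The new ingredient replaces the compactness step. Fix $c\in(\lambda_0,\min\sigma_{\rm ess}(A))$ and let $P_c:=\un_{(-\infty,c]}(A)$; since $c<\min\sigma_{\rm ess}(A)$, the range $E$ of $P_c$ is a finite-dimensional subspace of $H$ spanned by eigenvectors of $A$ associated to eigenvalues $\leq c$. Decomposing $u_n=P_cu_n+(I-P_c)u_n$ and using that $A\geq c$ on $(I-P_c)H$ yields
\begin{equation*}
\lambda_n=\langle Au_n,u_n\rangle\geq\min\sigma(A)\|P_cu_n\|^2+c\|(I-P_c)u_n\|^2,
\end{equation*}
so that $\|(I-P_c)u_n\|^2$ is bounded by a constant strictly less than $1$ and $\|P_cu_n\|^2$ stays bounded below by a positive constant. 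A finer spectral estimate based on
\begin{equation*}
\|R_n\|_a^2=\|Au_n-\lambda_nu_n\|_{V'}^2=\int_{\sigma(A)}\frac{(\mu-\lambda_n)^2}{\mu+\nu}\,dE_A(u_n,u_n),
\end{equation*}
together with the gap between $\lambda$ and $\min\sigma_{\rm ess}(A)$, then forces $\|(I-P_c)u_n\|_H\to 0$. Since $P_cu_n$ lives in the finite-dimensional space $E$, extracting a subsequence gives strong $H$-convergence of the full $u_n$: any weakly $V$-convergent subsequence $u_{n_k}\rightharpoonup u$ satisfies $u_{n_k}\to u$ strongly in $H$ with $\|u\|=1$. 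Passing to the limit in $R_n\to 0$ identifies $u$ as an $H$-normalized eigenvector of $a(\cdot,\cdot)$ associated with $\lambda$, and the norm identity $\|u_{n_k}\|_a^2=\lambda_{n_k}+\nu\to\lambda+\nu=\|u\|_a^2$ upgrades weak to strong convergence in $V$. The OReGA case and the simple-eigenvalue statement follow as in Section~\ref{sec:prooforth}, since the additional orthogonalization step lives in the finite-dimensional subspace $\mbox{Span}(u_0,z_1,\ldots,z_n)$ and only lowers the Rayleigh quotient.

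The main obstacle I foresee is making the spectral-localization step fully rigorous, namely deducing strong $H$-convergence of $(I-P_c)u_n$ to $0$ from $\|R_n\|_a\to 0$ together with the gap $\lambda<\min\sigma_{\rm ess}(A)$: without this gap, the essential spectrum could carry a non-vanishing portion of the mass of $u_n$ that escapes compactness, preventing the weak limit from being an $H$-normalized eigenvector. The gap assumption is exactly what is needed for the denominator $(\mu+\nu)$ appearing in the spectral integral to be bounded away from zero on the essential part of $\sigma(A)$ relative to $\lambda$, and thus to transfer $V'$-smallness of $Au_n-\lambda_nu_n$ into $H$-localization of $u_n$ on $E$.
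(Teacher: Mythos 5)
Your overall skeleton is right (monotonicity of $(\lambda_n)$, boundedness in $V$, a finite-rank spectral projector below $\min\sigma_{\rm ess}(A)$ to replace compactness), but the pivotal step has a genuine gap: you claim that Lemma~\ref{lem:Figueroa} plus the density of $\mbox{\rm Span}(\Sigma)$ forces the residual $R_n$ to tend to $0$ \emph{strongly} in $V$, and your whole spectral-localization argument
$\|(I-P_c)u_n\|^2 \leq C\,\|R_n\|_a^2$ rests on this. What the greedy construction actually gives is
$\|z_{n+1}\|_a = \sup_{z\in\Sigma^*} \langle R_n, z\rangle_a/\|z\|_a = \|R_n\|_*$, i.e.\ decay of the residual in the \emph{injective} norm associated with $\Sigma$, and $\sum_n\|z_n\|_a^2<\infty$ only yields $\|R_n\|_*\to 0$. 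Density of $\mbox{\rm Span}(\Sigma)$ upgrades this to $R_n\rightharpoonup 0$ weakly in $V$ (since $(R_n)$ is bounded), but in infinite dimensions the injective norm is strictly weaker than the dual norm, so $\|R_n\|_a\to 0$ does not follow; and the spectral integral $\int_{\sigma(A)}\frac{(\mu-\lambda_n)^2}{\mu+\nu}\,dE_A(u_n,u_n)$ needs genuine norm decay, not weak convergence. As written, the step ``$\|R_n\|_a\to 0$ hence $\|(I-P_c)u_n\|\to 0$'' is circular: strong decay of the residual is essentially equivalent to the conclusion you are trying to prove.

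The paper's proof avoids this by never using norm decay of the residual. It first passes to the limit in the variational inequality (\ref{eq:eqRenew}) for each \emph{fixed} $z\in\Sigma$ along a weakly convergent subsequence $u_{n_k}\rightharpoonup w$, and only then invokes (H$\Sigma 3$) to conclude that $w$ solves the eigenvalue equation for $\lambda$; this places $w$ in the finite-dimensional range of $P=\chi_{(-\infty,\lambda/2]}(A)$ (after normalizing $\min\sigma_{\rm ess}(A)=0$, so $\lambda<0$). The possible loss of mass is then excluded by an energy comparison rather than a residual bound: $Pu_{n_k}\to w$ strongly (finite rank), $\|(1-P)u_{n_k}\|^2\to 1-\|w\|^2$, and $a((1-P)u_{n_k},(1-P)u_{n_k})$ converges to $\lambda(1-\|w\|^2)$ while being bounded below by $\frac{\lambda}{2}(1-\|w\|^2)$ on $\mbox{\rm Ker}(P)$; since $\lambda<\lambda/2<0$ this forces $\|w\|=1$, and the identity $\|u_{n_k}\|_a^2=\lambda_{n_k}+\nu\to\lambda+\nu=\|w\|_a^2$ (which you do have) upgrades weak to strong $V$-convergence. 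If you replace your residual-decay step by this energy comparison on $\mbox{\rm Ker}(P)$, the rest of your argument goes through.
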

 
\begin{remark} The above proposition shows that the PReGA or OReGA can be used to solve electronic structure calculation problems (at least in principle). 
Indeed, let us consider a molecular system composed of $d$ electrons and $M$ nuclei, with electric charges $(Z_k)_{1\leq k \leq M} \in \left(\N^*\right)^M$, 
and positions $\left(R_k\right)_{1\leq k \leq M} \in \left(\R^3\right)^M$. The electronic ground state is the lowest eigenstate of (\ref{eq:eigenvalue}) with 
$H := \bigwedge_{i=1}^d L^2(\R^3)$ the space of square-integrable antisymmetric functions on $\left(\R^{3}\right)^d$, $V:= \bigwedge_{i=1}^d H^1(\R^3)$, and
$$
\forall v,w \in V, \quad a(v,w):= \frac{1}{2}\int_{\R^{3d}}\nabla v \cdot \nabla w + \int_{\R^{3d}} W vw,
$$
where 
$$
W(x_1, \cdots, x_d) := - \sum_{i=1}^d \sum_{k=1}^M \frac{Z_k}{|x_i - R_k|} + \sum_{1\leq i < j \leq d}.
$$
It is well-known that $a(\cdot, \cdot)$ satisfies assumption (HA). In addition, if the system is neutral or positively charged (i.e. if $\sum_{k=1}^M Z_k \geq d$), then the self-adjoint operator 
$A = -\frac 1 2 \Delta + W$ on $H$ has an infinite number of eigenvalues below the minimum of its essential spectrum. We denote by $\Sigma^\cS$ the set of 
the Slater determinants (up to a multiplicative constant), i.e.
$$
\Sigma^\cS:= \left\{ \psi(x) := c\;  \mbox{\rm det}\left( \phi_i(x_j)\right)_{1\leq i,j\leq d} \, |\; c\in \R, \; \forall 1\leq i,j\leq d, \; \phi_i \in H^1(\R^3), \; \int_{\R^3} \phi_i \phi_j  = \delta_{ij} \right\}.
$$
Then, the embedding $V\hookrightarrow H$ is dense and $\Sigma^{\cS}$ is a dictionary of $V$ in the sense of Definition~\ref{def:dictionary}. Thus, the assumptions and results of Proposition~\ref{prop:noHV} 
hold and the PReGA and OReGA can be used in order to give an approximation of the lowest eigenvalue of $a(\cdot, \cdot)$ and an associated eigenvector. How to implement efficiently such an algorithm in practice will be the 
object of a forthcoming article.  \end{remark}
 
The proof of Proposition~\ref{prop:noHV} is given in Section~\ref{sec:proofnoHV} for the PReGA, and in Section~\ref{sec:prooforth} for the OReGA.

\subsubsection{The finite-dimensional case}\label{sec:finited}

From now on, for any differentiable function $f: V \to \R$, and all $v_0\in V$, we denote by $f'(v_0)$ the derivative of the function $f$ at the point $v_0\in V$. 
More precisely, $f'(v_0)\in V'$ 
is the unique continuous linear form on $V$ such that for all $v\in V$, 
$$
f(v) = f(v_0) + \langle f'(v_0), v \rangle_{V', V} + r(v), \mbox{ with }\mathop{\lim}_{\|v\|_a \to 0} \frac{r(v)}{\|v\|_a} = 0.
$$

Besides, we define the injective norm on $V'$ associated to $\Sigma$ as follows: 
$$
\forall l\in V', \; \|l\|_* = \mathop{\sup}_{z\in \Sigma^*} \frac{\langle l, z\rangle_{V',V}}{\|z\|_a}.
$$

In the rest of this section, we assume that $V$, hence $H$ (since the embedding $V\hookrightarrow H$ is dense), are finite dimensional vector spaces. The convergence results below heavily rely on the \L ojasiewicz inequality~\cite{Lojas} and the ideas presented in~\cite{Levitt} for the proof of 
convergence of gradient-based algorithms for the Hartree-Fock equations.

The \L ojasiewicz inequality~\cite{Lojas} reads as follows: 

\begin{lemma}\label{lem:Loja}
Let $\Omega$ be an open subset of the finite-dimensional Euclidean space $V$, and $f$ an analytic real-valued function defined on $\Omega$. 
Then, for each $v_0 \in \Omega$, there is a neighborhood $U\subset \Omega$ of $v_0$ and two constants $K \in \R_+$ and $\theta\in (0,1/2]$ such that for all $v\in U$, 
\begin{equation}\label{eq:Lojainit}
|f(v) - f(v_0)|^{1-\theta} \leq K \|f'(v)\|_*.
\end{equation}
\end{lemma}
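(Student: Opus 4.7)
The plan is to reduce the stated inequality, formulated with the injective norm $\|\cdot\|_*$, to the classical \L ojasiewicz gradient inequality for real-analytic functions as proved in~\cite{Lojas}.

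First, I verify that $\|\cdot\|_*$ is indeed a norm on $V'$: if $\|l\|_* = 0$, then $\langle l, z \rangle_{V', V} = 0$ for all $z\in \Sigma$, hence for all $v\in \mbox{\rm Span}(\Sigma)$; by density of $\mbox{\rm Span}(\Sigma)$ in $V$ (assumption (H$\Sigma 3$)) and continuity of $l$, one concludes $l=0$. Because $V$ and thus $V'$ are finite-dimensional, all norms on $V'$ are equivalent, so there exists $c>0$ with $c\|l\|_{V'}\le \|l\|_*$ for every $l\in V'$, where $\|\cdot\|_{V'}$ denotes the dual norm of $\|\cdot\|_a$. It therefore suffices to prove the estimate with $\|f'(v)\|_{V'}$ in place of $\|f'(v)\|_*$, up to rescaling $K$.

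Next I split according to whether $v_0$ is a critical point of $f$. If $f'(v_0)\neq 0$, continuity of $f'$ yields a neighborhood $U$ of $v_0$ on which $\|f'(v)\|_{V'}\ge \frac{1}{2}\|f'(v_0)\|_{V'}>0$; shrinking $U$ further so that $|f(v)-f(v_0)|\le 1$, the desired inequality holds for any $\theta\in(0,1/2]$ with $K:=2/\|f'(v_0)\|_{V'}$. In the critical case $f'(v_0)=0$, the conclusion is exactly the classical \L ojasiewicz gradient inequality, which provides some exponent $\theta'\in(0,1)$ and a constant $K'>0$ such that $|f(v)-f(v_0)|^{1-\theta'}\le K'\|f'(v)\|_{V'}$ on a neighborhood of $v_0$. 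Setting $\theta:=\min(\theta',1/2)\in(0,1/2]$ and using that $|f(v)-f(v_0)|^{\theta'-\theta}$ stays bounded on any sufficiently small neighborhood of $v_0$, the stated form of the inequality follows after adjusting~$K$.

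The main — and really the only substantial — obstacle is the critical-point case. It relies essentially on the analyticity of $f$: the inequality fails for merely $C^\infty$ functions, as illustrated by $x\mapsto \exp(-1/x^2)$ at $x=0$. The standard argument, due to \L ojasiewicz, relies on the structure theory of semi-analytic sets (stratification, the \L ojasiewicz division theorem, and the curve selection lemma) together with resolution of singularities for real-analytic varieties. Rather than reproduce this machinery, I would simply invoke~\cite{Lojas}.
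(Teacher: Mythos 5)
Your proposal is correct and takes essentially the same route as the paper, which likewise treats only the easy non-critical (and non-degenerate) cases by hand and otherwise simply invokes the classical \L ojasiewicz gradient inequality from~\cite{Lojas}. Your explicit reduction of the $\|\cdot\|_*$ formulation to the standard dual-norm formulation, via (H$\Sigma 3$) and the equivalence of norms in finite dimension, is a bridging step the paper leaves implicit and is worth making.
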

This inequality can be understood in this way: it can be easily proved in the case when $v_0$ is not a critical point of $f$. When $v_0$ is a non-degenerate critical point, i.e. when the Hessian of $f$ at $v_0$ is invertible, 
then it is easy to see that $\theta$ can be chosen to be equal to $\frac{1}{2}$ by using a simple Taylor expansion. Moreover, when $v_0$ is a degenerate critical point of $f$, the analyticity assumption 
ensures that there exists $N\in\N^*$ such that the $N^{th}$-order derivatives cannot vanish simultaneously, and the exponent $\theta$ can be chosen to be equal to $\frac{1}{N}$. 

\medskip

Before stating our main result in finite dimension, we prove a useful lemma.
\begin{lemma}\label{lem:Lojaadap}
Let $V$ and $H$ be finite-dimensional Euclidean spaces, $\Omega:= \{ v\in V, \; 1/2 < \|v\| < 3/2 \}$, $\lambda$ be an eigenvalue of the bilinear form $a(\cdot, \cdot)$ 
and $F_\lambda$ be the set of the $H$-normalized eigenvectors of $a(\cdot, \cdot)$ associated to $\lambda$. 
Then, $\cJ: \Omega \to \R$ is analytic, 
and there exists $K  \in \R_+$, $\theta \in (0,1/2]$ and $\varepsilon >0$ such that
\begin{equation}\label{eq:Lojanew}
\mbox{for all } v\in \Omega \mbox{ such that } d(v,F_\lambda):= \mathop{\inf}_{w\in F_\lambda} \|v-w\| \leq \varepsilon, \quad \left|\cJ(v) - \lambda\right|^{1-\theta} \leq K \|\cJ'(v)\|_*.
\end{equation}
\end{lemma}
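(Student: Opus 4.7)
The plan is to establish analyticity directly and then upgrade the pointwise \L ojasiewicz inequality of Lemma~\ref{lem:Loja} to the uniform form~(\ref{eq:Lojanew}) by a compactness argument on $F_\lambda$.

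First I would verify analyticity. Since $V$ is finite-dimensional, both $v \mapsto a(v,v)$ and $v \mapsto \|v\|^2$ are polynomial functions on $V$, and the denominator is bounded below by $1/4$ on $\Omega$, so $\cJ=a(v,v)/\|v\|^2$ is a quotient of an analytic function by a nowhere vanishing analytic one on $\Omega$, hence analytic. Two observations that I will use repeatedly: every $w \in F_\lambda$ satisfies $\|w\| = 1 \in (1/2,3/2)$, so $F_\lambda \subset \Omega$; and $F_\lambda$ is compact in $V$ because it is the intersection of the (finite-dimensional) eigenspace of $a(\cdot,\cdot)$ associated with $\lambda$ and the $H$-unit sphere.

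Next I would apply Lemma~\ref{lem:Loja} pointwise on $F_\lambda$. For any $w \in F_\lambda$ one has $\cJ(w)=a(w,w)=\lambda$, so the lemma yields an open neighborhood $U_w \subset \Omega$ of $w$ and constants $K_w \in \R_+$, $\theta_w \in (0,1/2]$ such that
\[
|\cJ(v)-\lambda|^{1-\theta_w} \le K_w \|\cJ'(v)\|_* \quad \text{for all } v \in U_w.
\]
By compactness I extract a finite subcover $U_{w_1},\ldots,U_{w_N}$ of $F_\lambda$; since $F_\lambda$ is compact in the open set $\bigcup_i U_{w_i}$, there exists $\varepsilon_0>0$ for which the tube $\{v \in V : d(v,F_\lambda) \le \varepsilon_0\}$ is contained in $\Omega \cap \bigcup_i U_{w_i}$.

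The main obstacle is that the exponents $\theta_{w_i}$ depend on $i$; I would resolve this by setting $\theta := \min_i \theta_{w_i} \in (0,1/2]$ and controlling the mismatch $|\cJ(v)-\lambda|^{1-\theta} / |\cJ(v)-\lambda|^{1-\theta_{w_i}}$ through continuity. Using continuity of $\cJ$ and the fact that $\cJ \equiv \lambda$ on $F_\lambda$, I would further shrink $\varepsilon_0$ to some $\varepsilon \in (0,\varepsilon_0]$ ensuring $|\cJ(v)-\lambda| \le 1$ whenever $d(v,F_\lambda) \le \varepsilon$. Then, for any such $v$, picking an index $i$ with $v \in U_{w_i}$, we have $1-\theta \ge 1-\theta_{w_i}$ and hence
\[
|\cJ(v)-\lambda|^{1-\theta} \le |\cJ(v)-\lambda|^{1-\theta_{w_i}} \le K_{w_i}\|\cJ'(v)\|_* \le K\|\cJ'(v)\|_*,
\]
with $K:=\max_i K_{w_i}$, which delivers~(\ref{eq:Lojanew}) with uniform constants $K$, $\theta$, $\varepsilon$.
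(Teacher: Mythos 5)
Your proof is correct and follows essentially the same route as the paper's: analyticity of $\cJ$ on $\Omega$, pointwise application of the \L ojasiewicz inequality at each $w\in F_\lambda$, and extraction of a finite subcover using the compactness of $F_\lambda$. You additionally spell out a detail the paper leaves implicit, namely how to reconcile the different exponents $\theta_{w_i}$ by taking $\theta:=\min_i\theta_{w_i}$ and shrinking $\varepsilon$ so that $|\cJ(v)-\lambda|\leq 1$ on the tube; this is exactly the right way to justify the uniform constants.
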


\begin{proof}
The functional $\cJ:\Omega \to \R$ is analytic as a composition of analytic functions. Thus, from (\ref{eq:Lojainit}), 
for all $w\in F_\lambda$, there exists $\varepsilon_w>0$, $K_w \in \R_+$ and $\theta_w\in (0,1/2]$ such that
\begin{equation}\label{eq:lojw}
\forall v\in B(w,\varepsilon_w), \quad |\cJ(v) - \lambda|^{1-\theta_w} \leq K_w \|\cJ'(v)\|_*,
\end{equation}
where $B(w, \varepsilon_w):= \left\{ v\in V, \; \|v-w\|\leq \varepsilon_w\right\}$. Besides, for all $w\in F_\lambda$, we can choose $\varepsilon_w$ small enough so that 
$B(w, \varepsilon_w) \subset \Omega$. The family $(B(w, \varepsilon_w))_{w\in F_\lambda}$ forms a cover of open sets of $F_\lambda$. 
Since $F_\lambda$ is a compact subset of $V$ (it is a closed bounded subset of a finite-dimensional space), we can extract a finite subcover from the family $(B(w, \varepsilon_w))_{w\in F_\lambda}$, 
from which we deduce the existence of constants $\varepsilon>0$, $K >0$ and $\theta \in (0,1/2]$ such that
$$
\mbox{for all } v\in \Omega \mbox{ such that } d(v, F_\lambda) \leq \varepsilon, \quad \left|\cJ(v) - \lambda\right|^{1-\theta} \leq K \|\cJ'(v)\|_*.
$$
Hence the result.
\end{proof}

The proof of the following Theorem is given in Section~\ref{sec:prooffinited}.
\begin{theorem}\label{prop:finitedim}
Let $V$ and $H$ be finite dimensional Euclidian spaces and $a:V\times V\to \R$ be a symmetric bilinear form. The following properties hold for both PRaGA and PReGA:
\begin{enumerate}
\item the whole sequence $(u_n)_{n\in\N}$ strongly converges in $V$ to some $w_\lambda \in F_\lambda$;
\item the convergence rates are as follows, depending on the value of the parameter $\theta$ in (\ref{eq:Lojanew}):
\begin{itemize}
\item if $\theta = 1/2$, there exists $C\in \R_+$ and $0 < \sigma < 1$ such that for all $n\in \N$,
\begin{equation}\label{eq:rate1}
\|u_n -w_\lambda\|_a \leq C\sigma^n;
\end{equation}
\item if $\theta \in (0, 1/2)$, there exists $C \in \R_+$ such that for all $n\in \N^*$,
\begin{equation}\label{eq:rate2}
\|u_n -w_\lambda\|_a \leq C n^{-\frac{\theta}{1-2\theta}}.
\end{equation}
\end{itemize}
\end{enumerate}
\end{theorem}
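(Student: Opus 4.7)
The plan is to follow the \L ojasiewicz--Simon framework, as used in \cite{Levitt} for the Hartree--Fock equations: (i) show that once $(u_n)$ enters a neighborhood of $F_\lambda$ where Lemma~\ref{lem:Lojaadap} applies, it cannot leave it; (ii) establish a joint descent--stationarity estimate of the form $\lambda_{n-1}-\lambda_n \geq c\,\|\cJ'(u_{n-1})\|_*\, \|u_n - u_{n-1}\|_a$; and (iii) combine this with the \L ojasiewicz inequality~\eqref{eq:Lojanew} via a standard telescoping argument applied to the concave function $\phi(t)=t^\theta/\theta$, to prove that $\sum_n \|u_n-u_{n-1}\|_a < \infty$. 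This yields item~1. For item~2, the same descent combined with \L ojasiewicz gives a scalar recursion on $e_n := \lambda_n - \lambda \geq 0$ whose standard analysis produces the two convergence rates.

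\textbf{Preliminary reduction.} By Theorem~\ref{th:main}, $(\lambda_n)$ decreases to some eigenvalue $\lambda$, $F_\lambda \neq \emptyset$, and in finite dimension every subsequence of $(u_n)$ has a further subsequence converging strongly to an element of $F_\lambda$. Since $F_\lambda$ is compact (a closed bounded set in a finite-dimensional space), this implies $d(u_n, F_\lambda) \to 0$. Thus, for $n$ large enough, $u_n$ belongs to the neighborhood provided by Lemma~\ref{lem:Lojaadap}, and $e_n^{1-\theta} \leq K\,\|\cJ'(u_n)\|_*$.

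\textbf{Descent and step estimates.} For the PRaGA, the cone property~(H$\Sigma$1) gives $\cJ(u_{n-1}+tz)\geq \lambda_n$ for every $z \in \Sigma^*$ and $t\in \R$. Expanding the Rayleigh quotient in $t$ and using boundedness of $(\|u_{n-1}\|_V)$ to obtain a uniform second-order constant, one optimizes in $t$ at the direction nearly achieving the injective-norm supremum to get
\[
\lambda_{n-1} - \lambda_n \;\geq\; c_1\,\|\cJ'(u_{n-1})\|_*^2.
\]
For the PReGA, the same inequality follows directly from the convex quadratic structure of $\cE_n$: since $\cE_n'(0)$ is $\tfrac12\cJ'(u_{n-1})$ (restricted to $\Sigma$-tested directions), minimizing $\cE_n$ over $\Sigma$ yields $\cE_n(z_n)-\cE_n(0) \leq -c\|\cJ'(u_{n-1})\|_*^2$, and one converts this into a bound on $\lambda_{n-1}-\lambda_n$ via the identity $\cJ(u_n)=a(u_{n-1}+z_n, u_{n-1}+z_n)/\|u_{n-1}+z_n\|^2$. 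The delicate point is the step estimate $\|u_n-u_{n-1}\|_a \leq c_2 \|\cJ'(u_{n-1})\|_*$, obtained by: (a) using $\|u_n-u_{n-1}\|_a \lesssim \|z_n\|_a/\|u_{n-1}+z_n\|$, noting that $\|u_{n-1}+z_n\|\to 1$ as $d(u_n, F_\lambda) \to 0$; and (b) showing $\|z_n\|_a \lesssim \|\cJ'(u_{n-1})\|_*$ by a compactness argument in finite dimension: if this failed along a subsequence, one could extract limits contradicting the local quadratic structure of $\cJ$ at a point of $F_\lambda$. Together these yield
\[
\lambda_{n-1}-\lambda_n \;\geq\; c_3\,\|\cJ'(u_{n-1})\|_*\,\|u_n-u_{n-1}\|_a.
\]

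\textbf{Telescoping and rates.} Setting $\phi(t)=t^\theta/\theta$, the concavity of $\phi$ gives
$\phi(e_{n-1})-\phi(e_n) \geq e_{n-1}^{\theta-1}(e_{n-1}-e_n)$. Combining with the joint inequality above and the \L ojasiewicz bound $e_{n-1}^{1-\theta} \leq K\|\cJ'(u_{n-1})\|_*$, one obtains $\phi(e_{n-1})-\phi(e_n)\geq (c_3/K)\,\|u_n-u_{n-1}\|_a$. Summing, $\sum_n\|u_n-u_{n-1}\|_a \leq (K/c_3)\phi(e_{N})<\infty$ for $N$ large, so $(u_n)$ is Cauchy in $(V,\|\cdot\|_a)$ and converges to some $w_\lambda \in F_\lambda$ (this is item~1). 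For item~2, combine descent with \L ojasiewicz to get $e_{n-1}-e_n \geq c_4 \, e_{n-1}^{2(1-\theta)}$. When $\theta=1/2$ this reads $e_n\leq(1-c_4)e_{n-1}$, giving geometric decay $e_n \leq Ce_0\,\rho^n$; tail-summing the bound $\|u_n-u_{n-1}\|_a \lesssim \phi(e_{n-1})-\phi(e_n)$ yields $\|u_n-w_\lambda\|_a\leq C'\sigma^n$ with $\sigma=\rho^{1/2}<1$, which is~\eqref{eq:rate1}. When $\theta\in(0,1/2)$, the exponent $2(1-\theta)>1$ and a classical comparison argument gives $e_n = O(n^{-1/(1-2\theta)})$, hence $\|u_n-w_\lambda\|_a \lesssim e_n^{\theta} = O(n^{-\theta/(1-2\theta)})$, which is~\eqref{eq:rate2}.

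\textbf{Main obstacle.} The hardest step is the step-size bound $\|u_n - u_{n-1}\|_a \leq c_2\|\cJ'(u_{n-1})\|_*$. Unlike a gradient step, $z_n$ is defined by minimization over the nonconvex set $\Sigma$, and its size is not a priori controlled by $\|\cJ'(u_{n-1})\|_*$. The argument must use finite-dimensional compactness together with the fact that $u_{n-1}$ is close to a critical manifold, to rule out ``large'' correction steps; this is precisely where the finite-dimensional hypothesis is essential, and where the analysis departs from the convex-minimization case covered by Theorem~\ref{th:convex}.
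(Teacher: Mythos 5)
Your overall \L ojasiewicz strategy is the one the paper follows, and several pieces of your argument are sound: the descent bound $\lambda_{n-1}-\lambda_n\geq c_1\|\cJ'(u_{n-1})\|_*^2$, the telescoping with $\phi(t)=t^\theta/\theta$, and the derivation of the two rates from the scalar recursion on $\lambda_n-\lambda$ together with the tail bound $\|u_n-w_\lambda\|_a\lesssim(\lambda_n-\lambda)^\theta$. The gap is exactly where you locate the ``main obstacle'': the step bound $\|z_n\|_a\leq c_2\|\cJ'(u_{n-1})\|_*$ is the wrong inequality, and it is false in general for the PRaGA. It does hold for the PReGA (by Lemma~\ref{lem:Figueroa} one has in fact the identity $\|z_n\|_a=\|\cJ'(u_{n-1})\|_*$), but for the PRaGA, in the degenerate case $\theta<1/2$ --- which is precisely the case where $\cJ$ is \emph{not} locally quadratic near $F_\lambda$, so the ``local quadratic structure'' your compactness argument invokes is unavailable --- the minimizing correction can be much larger than the gradient. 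The one-dimensional caricature $f(x)=x^4$ near $0$ already shows the phenomenon: at $x_0$ the gradient is $O(|x_0|^3)$ while the minimizing step is $|x_0|$. No finite-dimensional compactness argument can rescue an inequality that fails in this model.

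What the paper proves instead is the \emph{reverse} inequality $\|\cJ'(u_{n-1})\|_*\leq C\|z_n\|_a$, obtained from a second-order Taylor expansion of $\cJ$ around $u_{n-1}+z_n$ (the Hessian being bounded on $\overline{\Omega}$), the Euler equation $\langle\cJ'(u_{n-1}+z_n),z_n\rangle_{V',V}=0$ of Lemma~\ref{lem:ELrayleigh}, and the choice of test scale $\varepsilon=\|z_{n}\|_a/\|z\|_a$. Combined with the descent estimate $\lambda_{n-1}-\lambda_n\geq c\|\zt_n\|_a^2$ (which follows from Lemmas~\ref{lem:lemCV1} and~\ref{lem:lemma3}), this yields
$$
\frac{\lambda_{n-1}-\lambda_n}{\|\cJ'(u_{n-1})\|_*}\;\geq\;\frac{c\,\|\zt_n\|_a^2}{C\,\|z_n\|_a}\;\gtrsim\;\|\zt_n\|_a,
$$
which is all the telescoping needs; your joint inequality $\lambda_{n-1}-\lambda_n\geq c_3\|\cJ'(u_{n-1})\|_*\,\|u_n-u_{n-1}\|_a$ then follows, but from ingredients pointing in the opposite direction from yours. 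With this substitution the rest of your argument (summability of the steps, hence item~1, and the two rates) goes through and coincides with the paper's proof up to bookkeeping: the paper runs the recursion on the tail sums $e_n=\sum_{k\geq n}\|\zt_k\|_a$ rather than on $\lambda_n-\lambda$, reaching the same exponents.
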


\subsection{Discussion about the initial guess}

\subsubsection{Possible choice of initial guess}\label{sec:initguess}

We present here a generic procedure to choose an initial guess $u_0\in V$ satisfying $\|u_0\| = 1$ and $a(u_0,u_0) \leq \lambda_\Sigma$:

\medskip

\bfseries Choice of an initial guess: \normalfont

\begin{itemize}
\item \bfseries Initialization: \normalfont choose $z_0\in \Sigma$ such that
\begin{equation}\label{eq:mininit}
z_0 \in \mathop{\mbox{argmin}}_{z\in \Sigma} \cJ(z),
\end{equation}
and set $u_0  := \frac{z_0}{\|z_0\|}$.
\end{itemize}

From Lemma~\ref{lem:lemma2}, (\ref{eq:mininit}) always has at least one solution and it is straightforward to see that $\|u_0\| = 1$ and $a(u_0,u_0) = \lambda_\Sigma$. 
In all the numerical tests presented in Section~\ref{sec:numtest}, our initial guess is chosen according to this procedure. 

\subsubsection{Special case of the PRaGA}\label{sec:initRay}

Let us recall that in the case of the PRaGA, we required that the initial guess $u_0$ of the algorithm satisfies $a(u_0, u_0) < \lambda_\Sigma$, whereas the above procedure generates an 
initial guess $u_0$ with $a(u_0,u_0) = \lambda_\Sigma$. Let us comment on this condition. We distinguish here two different cases:
\begin{itemize}
 \item If the element $u_0$ computed with the procedure presented in Section~\ref{sec:initguess}, is an eigenvector of $a(\cdot, \cdot)$ associated to the eigenvalue $\lambda_0$, then 
from Lemma~\ref{lem:leminit}, $\cJ(u_0 + z) \geq \cJ(u_0)$ for all $z\in \Sigma$. We exclude this case from now on in all the rest of the article.
Let us point out though that this case happens only in very particular situations. 
Indeed, it can be proved that if we consider the prototypical example presented in Section~\ref{sec:example} with 
$b = 1$, $W$ a H\"older-continuous function (this assumption can be weakened) and $\Sigma = \Sigma^\otimes$ defined by (\ref{eq:rank1}), then an element $z\in \Sigma$ is an eigenvector associated to the bilinear form 
$a(\cdot, \cdot)$ defined by (\ref{eq:defaex}) if and only if the potential $W$ can be written as a sum of one-body potentials of the form
$$
W(x_1, \ldots, x_d) = W_1(x_1) + \cdots + W_d(x_d). 
$$
We provide a proof of this result in the Appendix.
\item If $u_0$ is not an eigenvector of $a(\cdot, \cdot)$ associated to the eigenvalue $\lambda_0$, then from Lemma~\ref{lem:leminit}, 
there exists some $z \in \Sigma$ such that $\cJ(u_0 + z) < \cJ(u_0)$. Thus, up to taking $u_0:= u_0 +z$ as the new initial guess, 
we have that $\lambda_0:= a(u_0, u_0) < \lambda_\Sigma$. 
\end{itemize}

\subsubsection{Convergence towards the lowest eigenstate}

As mentioned above, the greedy algorithms may not converge towards the {\em lowest} eigenvalue of the bilinear form $a(\cdot, \cdot)$ 
depending on the choice of the initial guess $u_0$. Of course, if $u_0$ is chosen so that 
$\lambda_0 = a(u_0,u_0) < \mu^*_2:= \mathop{\inf}_{j\in\N^*}\left\{ \mu_j\; | \; \mu_j > \mu_1\right\}$, then the sequences
 $(\lambda_n)_{n\in\N}$ generated by the greedy algorithms automatically converge to $\mu_1$. However, the construction of such an initial guess $u_0$
 in the general case is not obvious.  

\medskip

One might hope that using the procedure presented in Section~\ref{sec:initguess} to choose the initial guess $u_0$ would be sufficient to ensure that the greedy algorithms 
converge to $\mu_1$. Unfortunately, this is not the case, as shown in Example~\ref{ex:ex2}.
However, we believe that this only happens in pathological situations, and that, in most practical cases, 
the eigenvalue approximated by a greedy algorithm using this procedure to determine the initial guess is indeed $\mu_1$.

\section{Numerical implementation}\label{sec:resnum}

In this section, we present how the above algorithms, and the one proposed in~\cite{Chinesta-QC}, can be implemented in practice in the case when 
$\Sigma$ is the set of rank-1 tensor product functions of the form
(\ref{eq:rank1}): $\Sigma:= \Sigma^{\otimes}$. 

We consider here the case when $V$ and $H$ are Hilbert spaces of functions depending on $d$ variables $x_1, \;\ldots, \; x_d$, for some $d\in \N^*$, such that (HV) is satisfied. For all 
$1\leq j \leq d$, let $V_j$ be a Hilbert space of functions depending only on the variable $x_j$ such that the subset
\begin{equation}\label{eq:format}
\Sigma:= \left\{ r^{(1)} \otimes \cdots \otimes r^{(d)}\; | \; r^{(1)} \in V_1, \ldots, r^{(d)} \in V_d \right\}
\end{equation}
is a dictionary of $V$, according to Definition~\ref{def:dictionary}. For all $z_n\in \Sigma$ such that $z_n = r_n^{(1)}\otimes \cdots \otimes r_n^{(d)}$ with 
$\left( r_n^{(1)} , \ldots , r_n^{(d)} \right) \in V_1 \times \cdots \times V_d$, we define the tangent space to $\Sigma$ at $z_n$ as 
\begin{align*}
T_\Sigma(z_n) & :=  \left\{ \delta r^{(1)} \otimes r_n^{(2)} \otimes \cdots \otimes r_n^{(d)} + r_n^{(1)}\otimes \delta r^{(2)} \otimes \cdots \otimes r_n^{(d)} + \cdots + r_n^{(1)} \otimes r_n^{(2)} \otimes \cdots \otimes \delta r^{(d)} \; | \right. \\
&\left. \delta r^{(1)} \in V_1, \ldots, \delta r^{(d)} \in V_d\right\}. \\
\end{align*}

\subsection{Computation of the initial guess}\label{sec:firstit}

The initial guess $u_0\in V$ of all the algorithms is computed as follows: choose
$$
u_0:= z_0 = r_0^{(1)} \otimes \cdots \otimes r_0^{(d)} \in \mbox{\rm argmin}_{\left(r^{(1)}, \ldots, r^{(d)}\right)\in V_1 \times \cdots \times V_d} \cJ\left(r^{(1)} \otimes \cdots \otimes r^{(d)}\right),
$$
such that $\|u_0\| = \|z_0\| = 1$. 
To compute this initial guess in practice, we use the well-known Alternating Direction Method (ADM) (also called in the literature Alternating Least Square method
 in~\cite{Hackbusch, Schneider, Schneider2}, or fixed-point procedure in~\cite{Chinesta-QC, LBLM}): 
\begin{itemize}
 \item \bfseries Initialization: \normalfont choose $\left( s_0^{(1)}, \ldots, s_0^{(d)} \right)\in V_1 \times \cdots \times V_d$ 
such that $\left\| s_0^{(1)} \otimes \cdots \otimes s_0^{(d)} \right\| = 1$;
 \item \bfseries Iterate on $m=1, \ldots , m_{max}$: \normalfont 
\begin{itemize}
\item \bfseries Iterate on $j=1, \ldots,  d$: \normalfont choose $s_m^{(j)} \in V_j$ such that
\begin{equation}\label{eq:smj}
s_m^{(j)} \in \mathop{\mbox{\rm argmin}}_{s^{(j)}\in V_j} \cJ\left( s_m^{(1)}\otimes \cdots \otimes s_m^{(j-1)} \otimes s^{(j)} \otimes s_{m-1}^{(j+1)} \otimes \cdots \otimes s_{m-1}^{(d)}\right); 
\end{equation}
\end{itemize}
\item set $r_0^{(1)} \otimes \cdots \otimes r_0^{(d)} = s_m^{(1)} \otimes \cdots \otimes s_m^{(d)}$. 
\end{itemize}

It is observed that the ADM algorithm converges quite fast in practice. 
Actually, the resolution of (\ref{eq:smj}) amounts to computing the smallest eigenvalue and an associated eigenvector of a \itshape low-dimensional \normalfont eigenvalue problem, since 
$s_m^{(j)}$ is an eigenvector associated to the smallest eigenvalue of the bilinear form $a_{m,j}: V_j \times V_j \to \R$ with respect to the scalar product 
$\langle \cdot, \cdot \rangle_{m,j}: V_j \times V_j \to \R$, such that for all $v_1^{(j)}, v_2^{(j)} \in V_j$,
$$
a_{m,j}\left( v_1^{(j)}, v_2^{(j)}\right) = a\left( s_m^{(1)}\otimes \cdots \otimes s_m^{(j-1)} \otimes v_1^{(j)} \otimes s_{m-1}^{(j+1)} \otimes \cdots \otimes s_{m-1}^{(d)}, s_m^{(1)}\otimes \cdots \otimes s_m^{(j-1)} \otimes v_2^{(j)} \otimes s_{m-1}^{(j+1)} \otimes \cdots \otimes s_{m-1}^{(d)}\right), 
$$
and
$$
\left\langle v_1^{(j)}, v_2^{(j)}\right\rangle_{m,j} = \left\langle s_m^{(1)}\otimes \cdots \otimes s_m^{(j-1)} \otimes v_1^{(j)} \otimes s_{m-1}^{(j+1)} \otimes \cdots \otimes s_{m-1}^{(d)}, s_m^{(1)}\otimes \cdots \otimes s_m^{(j-1)} \otimes v_2^{(j)} \otimes s_{m-1}^{(j+1)} \otimes \cdots \otimes s_{m-1}^{(d)}\right\rangle. 
$$

\subsection{Implementation of the Pure Rayleigh Greedy Algorithm}\label{sec:rayleighit}

We now detail how the iterations of the PRaGA presented in Section~\ref{sec:rayleigh} are implemented in practice. The Euler equation 
associated to the minimization problem (\ref{eq:minRay1}) reads:  
\begin{equation}\label{eq:Eulerrayleigh}
\forall \delta z \in T_\Sigma(z_n), \quad  a\left( u_{n-1} + z_n , \delta z\right) - \lambda_n \langle u_{n-1} + z_n, \delta z \rangle = 0,
\end{equation}
where we recall that $\lambda_n:= a(u_n,u_n)$. 
We also use an ADM procedure to compute the tensor product $z_n=r_n^{(1)} \otimes \cdots \otimes r_n^{(d)}$, which reads as follows:
\begin{itemize}
 \item \bfseries Initialization: \normalfont choose $\left( s_0^{(1)}, \ldots, s_0^{(d)}\right)  \in V_1 \times \cdots \times V_d$;
\item \bfseries Iterate on $m = 1,  \ldots,  m_{max}$: \normalfont
\begin{itemize}
 \item \bfseries Iterate on $j = 1,  \ldots,  d$: \normalfont find $s_m^{(j)}\in V_j$ such that
\begin{equation}\label{eq:ALSalgo2}
s_m^{(j)} \in \mathop{\mbox{\rm argmin}}_{s^{(j)} \in V_j} \cJ \left( u_{n-1} + s_m^{(1)} \otimes \cdots \otimes s_m^{(j-1)} \otimes s^{(j)} \otimes s_{m-1}^{(j+1)} \otimes \cdots \otimes s_{m-1}^{(d)} \right);
\end{equation}
\end{itemize}
\item set $\left( r_n^{(1)}, \ldots, r_n^{(d)} \right)  = \left( s_m^{(1)}, \ldots, s_m^{(d)} \right)$. 
\end{itemize}
For $n\geq 1$, the minimization problems (\ref{eq:ALSalgo2}) are well-defined. 
Let us now detail a method for solving (\ref{eq:ALSalgo2}) in the discrete case, which seems to be new. For all $1\leq j \leq d$, let $N_j\in \N^*$ and let $\left(\phi_i^{(j)} \right)_{1\leq i \leq  {N_j}}$ 
be a Galerkin basis of a finite-dimensional subspace $V_{j, N_j}$ of $V_j$. The value of $N$ is fixed in the 
rest of this section. The discrete version of the algorithm reads:
\begin{itemize}
 \item \bfseries Initialization: \normalfont choose $\left( s_0^{(1)}, \ldots, s_0^{(d)}\right)  \in V_{1, N_1} \times \cdots \times V_{d, N_d}$;
\item \bfseries Iterate on $m = 1,  \ldots,  m_{max}$: \normalfont
\begin{itemize}
 \item \bfseries Iterate on $j = 1,  \ldots,  d$: \normalfont find $s_m^{(j)}\in V_{j,N_j}$ such that
\begin{equation}\label{eq:ALSalgo2d}
s_m^{(j)} \in \mathop{\mbox{\rm argmin}}_{s^{(j)} \in V_{j,N_j}} \cJ \left( u_{n-1} + s_m^{(1)} \otimes \cdots \otimes s_m^{(j-1)} \otimes s^{(j)} \otimes s_{m-1}^{(j+1)} \otimes \cdots \otimes s_{m-1}^{(d)} \right);
\end{equation}
\end{itemize}
\item set $\left( r_n^{(1)}, \ldots, r_n^{(d)} \right)  = \left( s_m^{(1)}, \ldots, s_m^{(d)} \right)$. 
\end{itemize}
We present below how (\ref{eq:ALSalgo2d}) is solved for a fixed value of $j\in \{1, \cdots, d\}$. To simplify the notation, we assume that all the $N_j$ are equal and denote by $N$ their common value. 
Denoting by $S = \left(S_i\right)_{1\leq i \leq N}\in\R^N$ the vector of the coordinates of the function $s^{(j)}$ in the basis $\left( \phi_i^{(j)} \right)_{1\leq i \leq N}$, so that
$$
s^{(j)} = \sum_{i=1}^N S_i \phi_i^{(j)},
$$
it holds
$$
\cJ \left( u_{n-1} + s_m^{(1)} \otimes \cdots \otimes s_m^{(j-1)} \otimes s^{(j)} \otimes s_{m-1}^{(j+1)} \otimes \cdots \otimes s_{m-1}^{(d)} \right) 
= \frac{S^T \cA S + 2 A^TS + \alpha}{S^T\cB S + 2 B^TS + 1},
$$
where the symmetric matrix $\cA\in \R^{N\times N}$, the positive definite symmetric matrix $\cB \in \R^{N\times N}$, the vectors $A,B \in \R^N$, and the real number $\alpha:= a(u_{n-1}, u_{n-1})$
 are independent of $S$. Making the change of variable 
 $T = \cB^{1/2}S + \cB^{-1/2}B$, we obtain 
$$
\cJ \left( u_{n-1} + s_m^{(1)} \otimes \cdots \otimes s_m^{(j-1)} \otimes s^{(j)} \otimes s_{m-1}^{(j+1)} \otimes \cdots \otimes s_{m-1}^{(d)} \right) 
= \cL(T) := \frac{T^T\cC T +2 C^T T + \gamma}{T^T T + \delta},
$$
where the symmetric matrix $\cC\in \R^{N\times N}$, the vector $C \in \R^N$ and the real numbers $\gamma\in \R$ and $\delta>0$ are independent of $T$.
Solving problem (\ref{eq:ALSalgo2d}) is therefore equivalent to solving
\begin{equation}\label{eq:discrete2}
\mbox{ find } T_m \in \R^N \mbox{ such that }
T_m \in \mathop{\mbox{\rm argmin}}_{T\in \R^N} \cL(T).
\end{equation}
An efficient method to solve (\ref{eq:discrete2}) is the following. Let us denote by $(\kappa_i)_{1\leq i \leq N}$ the eigenvalues of the matrix $\cC$ 
(counted with multiplicity) and let $(K_i)_{1\leq i \leq N }$ be an orthonormal family (for the Euclidean scalar product of $\R^N$) of associated eigenvectors. 
Let $(c_i)_{1\leq i\leq N}$ (resp. $(t_i)_{1\leq i\leq N}$) be the coordinates of the vector $C$ (resp. of the trial vector $T$)  in the basis $(K_i)_{1\leq i \leq N }$: 
$$
C = \sum_{i=1}^N c_i K_i, \quad T = \sum_{i=1}^N t_i K_i.
$$
We aim at finding $(t_{i,m})_{1\leq i \leq N}$ 
the coordinates of a vector $T_m$ solution of (\ref{eq:discrete2}) in the basis $(K_i)_{1\leq i \leq N }$. For any $T\in\R^N$, we have
$$
\cL(T) = \frac{\sum_{i=1}^N \kappa_i t_i^2 + 2 \sum_{i=1}^N c_i t_i +\gamma}{\sum_{i=1}^N t_i^2 + \delta}.
$$
Denoting by $\rho_m := \cL(T_m)\geq \mu_1$, the Euler equation associated with (\ref{eq:discrete2}) reads: 
$$
\forall 1\leq i \leq N, \quad \kappa_i t_{i,m} + c_i = \rho_m t_{i,m},
$$
so that
\begin{equation}\label{eq:coordti}
\forall 1\leq i \leq N, \quad t_{i,m} = \frac{c_i}{\rho_m - \kappa_i}.
\end{equation}
This implies that
$$
\cL(T_m) = \frac{\sum_{i=1}^N \kappa_i \frac{c_i^2}{(\rho_m - \kappa_i)^2} + 2 \sum_{i=1}^N \frac{c_i^2}{\rho_m - \kappa_i} + \gamma}{\sum_{i=1}^N \frac{c_i^2}{(\rho_m - \kappa_i)^2} + \delta}.
$$
Setting for all $\rho \in \R\setminus\{\kappa_i\}_{1\leq i \leq N} $, 
\begin{equation}\label{eq:Mrho}
\cM(\rho) = \frac{\sum_{i=1}^N \kappa_i \frac{c_i^2}{(\rho - \kappa_i)^2} + 2 \sum_{i=1}^N \frac{c_i^2}{\rho - \kappa_i} + \gamma}{\sum_{i=1}^N \frac{c_i^2}{(\rho - \kappa_i)^2} + \delta},
\end{equation}
it holds that 
$$
\rho_m = \cL(T_m) = \cM(\rho_m) \leq \mathop{\inf}_{\rho \in \R\setminus \{\kappa_i\}_{1\leq i \leq N}} \cM(\rho) = \mathop{\inf}_{\rho \in \R\setminus \{\kappa_i\}_{1\leq i \leq N}} \cL(T(\rho)),
$$
where $T(\rho) = \sum_{i=1}^N t_i(\rho) K_i$ with $t_i(\rho) = \frac{c_i}{\rho - \kappa_i}$ for all $1\leq i \leq N$. Thus,
\begin{equation} \label{eq:mu}
\rho_m = \mathop{\mbox{\rm argmin}}_{\rho \in \R\setminus \{\kappa_i\}_{1\leq i \leq N}} \cM(\rho). 
\end{equation}
The Euler equation associated with the one-dimensional minimization problem (\ref{eq:mu}) reads, after some algebraic manipulations,
$$
\rho_m \delta = \sum_{i=1}^N \frac{ c_i^2 }{\rho_m - \kappa_i} + \gamma. 
$$
Denoting by $f:\rho \in \R \setminus \{\kappa_i\}_{1\leq i \leq N} \mapsto \sum_{i=1}^N \frac{c_i^2}{\rho - \kappa_i} + \gamma$, we have the following lemma:
\begin{lemma}
Let $T_m$ be a solution to (\ref{eq:discrete2}). The real number $\rho_m := \cL(T_m)$ is the smallest solution to the equation
\begin{equation}\label{eq:rho}
\mbox{ find }\rho \in \R \setminus \{\kappa_i\}_{1\leq i \leq N} \mbox{ such that }\rho \delta = f(\rho).
\end{equation}
\end{lemma}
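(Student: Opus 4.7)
The plan is to split the statement into two parts: (a) $\rho_m := \cL(T_m)$ is a solution of (\ref{eq:rho}); (b) $\rho_m$ is the smallest such solution. Part (a) is already essentially established in the derivation preceding the lemma. Indeed, the Euler equation for (\ref{eq:discrete2}) at $T_m$ gives $t_{i,m}=c_i/(\rho_m-\kappa_i)$ (which requires $\rho_m\notin\{\kappa_i\}_{1\le i\le N}$; I would first dispose of that point by noting that otherwise the denominator $\sum_i c_i^2/(\rho-\kappa_i)^2+\delta$ in $\cM$ would blow up near $\rho_m$, forcing $c_i=0$ for those indices, in which case the corresponding components can be removed from the sums without changing $\cL$). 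Then $\cL(T_m)=\cM(\rho_m)$, and since $\rho_m$ minimizes $\cM$ over $\R\setminus\{\kappa_i\}$, the algebraic manipulation sketched above its statement yields $\rho_m\delta=f(\rho_m)$.

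For part (b), the crucial observation is the following: for \emph{any} $\rho^*\in\R\setminus\{\kappa_i\}_{1\le i\le N}$ satisfying $\rho^*\delta=f(\rho^*)$, one has $\cM(\rho^*)=\rho^*$. To check this identity I would multiply $\rho^*$ by the denominator of $\cM(\rho^*)$, write
$$\rho^*\cdot\frac{c_i^2}{(\rho^*-\kappa_i)^2}=\frac{c_i^2}{\rho^*-\kappa_i}+\kappa_i\frac{c_i^2}{(\rho^*-\kappa_i)^2}$$
via the decomposition $\rho^*=(\rho^*-\kappa_i)+\kappa_i$, and then substitute $\rho^*\delta=\sum_i c_i^2/(\rho^*-\kappa_i)+\gamma$ for the remaining term. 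After summing, one recovers exactly the numerator of $\cM(\rho^*)$, proving the identity.

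Now set $T^*:=\sum_{i=1}^N\frac{c_i}{\rho^*-\kappa_i}K_i\in\R^N$. By the very definition of $\cM$ as $\cL$ evaluated along the curve $\rho\mapsto T(\rho)$, we have $\cL(T^*)=\cM(\rho^*)$. Combining this with the identity just proved gives $\cL(T^*)=\rho^*$. Since $T_m$ is by assumption a global minimizer of $\cL$ on $\R^N$,
$$\rho_m=\cL(T_m)\le\cL(T^*)=\rho^*.$$
As $\rho^*$ was an arbitrary solution of (\ref{eq:rho}), this proves that $\rho_m$ is the smallest solution.

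The main (and essentially only nontrivial) step is the algebraic verification that $\cM(\rho^*)=\rho^*$ whenever $\rho^*$ solves (\ref{eq:rho}); the comparison argument is then a one-line application of the minimization property of $T_m$. The only side issue is to exclude the degenerate possibility $\rho_m\in\{\kappa_i\}$, which, as explained above, either contradicts $T_m$ being a minimizer or can be absorbed by discarding indices $i$ with $c_i=0$.
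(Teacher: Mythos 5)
Your proposal is correct and follows essentially the same route as the paper's proof: both rest on the identity $\cM(\rho)=\cL(T(\rho))=\rho$ for any solution $\rho$ of (\ref{eq:rho}), followed by a comparison using the minimality of $T_m$ (the paper phrases this via (\ref{eq:mu}), i.e. minimality of $\rho_m$ for $\cM$, which is derived from the same fact). Your write-up merely makes explicit the "algebraic manipulations" the paper leaves implicit and adds a reasonable treatment of the degenerate case $\rho_m\in\{\kappa_i\}_{1\leq i\leq N}$ (forcing $c_i=0$), which the paper does not discuss.
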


\begin{proof}
The calculations detailed above show that $\rho_m$ is a solution of (\ref{eq:rho}). On the other hand, for all $\rho \in \R$ satisfying (\ref{eq:rho}), 
it can be easily seen after some algebraic 
manipulations that $\rho = \cM(\rho) = \cL(T(\rho))$. Thus, since $\rho_m$ is solution to (\ref{eq:mu}), in particular, for all $\rho\in \R$ solution of (\ref{eq:rho}), we have
$$
\rho_m  = \cL(T(\rho_m)) = \cM(\rho_m)\leq \cM(\rho) = \rho = \cL(T(\rho)).
$$
\end{proof}

For all $1\leq i \leq N$, $f(\kappa_i^-) = -\infty$, $f(\kappa_i^+) = +\infty$, $f(-\infty)= f(+\infty) = \gamma$ and the function $f$ is decreasing on each interval $(\kappa_i, \kappa_{i+1})$ 
(with the convention $\kappa_0 = -\infty$ and $\kappa_{N+1} = +\infty$). Thus, equation (\ref{eq:rho}) has exactly one solution in each interval $(\kappa_i, \kappa_{i+1})$. Thus, $\rho_m$ is the unique 
solution of (\ref{eq:rho}) lying in the interval $(-\infty, \kappa_1)$ (see Figure~1).

 \begin{figure}\label{fig:figNewton}
   \centering
   \input{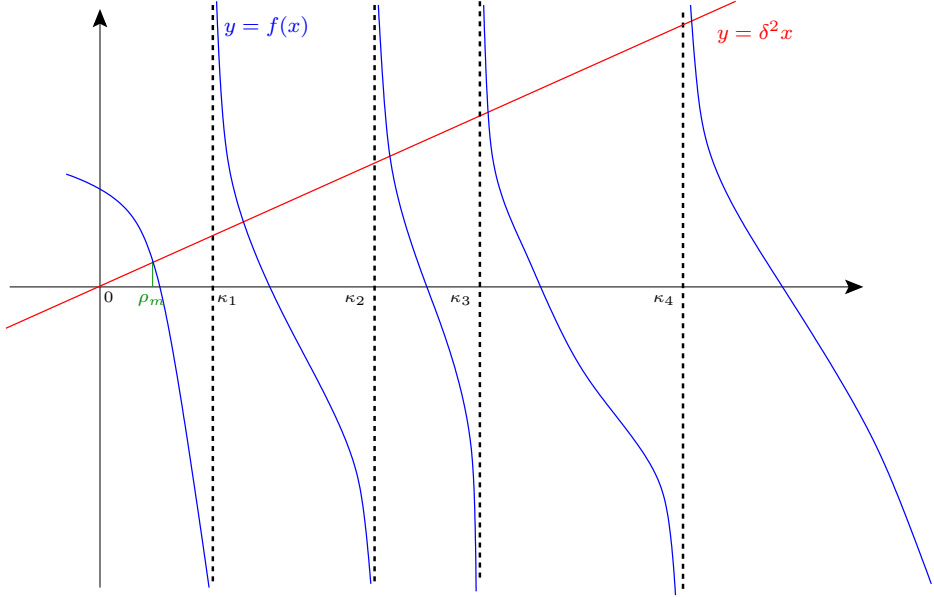}

   \caption{Solutions of equation (\ref{eq:rho}).}
   \label{fig:fig2}
 \end{figure}

We use a standard Newton algorithm to solve equation (\ref{eq:rho}).
 The coordinate of a vector $T_m$ solution of (\ref{eq:discrete2}) are then determined using (\ref{eq:coordti}). Thus, solving (\ref{eq:ALSalgo2d}) amounts to fully diagonalizing 
the low-dimensional $N\times N = N_j\times N_j$ matrix $\mathcal{C}$. 

\medskip

Let us point out that problems (\ref{eq:smj}) and (\ref{eq:ALSalgo2}) are of different nature: in particular, (\ref{eq:smj}) is an eigenvalue problem whereas (\ref{eq:ALSalgo2}) is not. 
In the discrete setting, the strategy presented in this section for the resolution of (\ref{eq:ALSalgo2}) could also be applied to the resolution of (\ref{eq:smj}); 
however, since it requires the full diagonalization of matrices of sizes $N_j\times N_j$, it is more expensive from a computational point of view than standard algorithms dedicated to the computation of the smallest 
eigenvalue of a matrix, which can be used for the resolution of (\ref{eq:smj}).

\subsection{Implementation of the Pure Residual Greedy Algorithm}

The Euler equation associated to the minimization problem (\ref{eq:algo3}) reads:  
$$
\forall \delta z \in T_\Sigma(z_n), \quad \langle u_{n-1} + z_n , \delta z\rangle_a  - (\lambda_{n-1} +\nu) \langle u_{n-1}, \delta z \rangle = 0.
$$
This equation is solved using again an ADM procedure, which reads as follows: 
\begin{itemize}
 \item \bfseries Initialization: \normalfont choose $\left( s_0^{(1)} , \ldots, s_0^{(d)} \right) \in V_1 \times \cdots \times V_d$; 
\item \bfseries Iterate on $m=1,  \ldots,  m_{max}$: \normalfont
\begin{itemize}
 \item \bfseries Iterate on $j = 1,  \ldots,  d$: \normalfont find $s_m^{(j)} \in V_j$ such that for all $\delta s^{(j)} \in V_j$,
\begin{equation}\label{eq:residualit}
\left\langle u_{n-1} + z_m^{(j)}, \delta z_m^{(j)} \right\rangle_a -( \lambda_{n-1} + \nu)\left \langle u_{n-1}, \delta z_m^{(j)} \right \rangle=0,
\end{equation}
where 
$$
z_m^{(j)} = s_m^{(1)} \otimes \cdots \otimes s_m^{(j-1)} \otimes s_m^{(j)} \otimes s_{m-1}^{(j+1)} \otimes \cdots \otimes s_{m-1}^{(d)}
$$
and
$$
\delta z_m^{(j)}  = s_m^{(1)} \otimes \cdots \otimes s_m^{(j-1)} \otimes \delta s^{(j)} \otimes s_{m-1}^{(j+1)} \otimes \cdots \otimes s_{m-1}^{(d)};
$$
\end{itemize}
\item set $\left( r_n^{(1)}, \ldots, r_n^{(d)} \right) = \left( s_m^{(1)}, \ldots , s_m^{(d)} \right)$.  
\end{itemize}

In our numerical experiments, we observed that this algorithm rapidly converges to a fixed point. Let us point out that using the same space discretization as in 
Section~\ref{sec:rayleighit}, namely a Galerkin basis of $N_j$ functions for all $1\leq j \leq d$, the resolution of (\ref{eq:residualit}) only requires the inversion 
(and not the diagonalization) of low-dimensional $N_j\times N_j$ matrices.

\subsection{Implementation of the Pure Explicit Greedy Algorithm}

At each iteration of this algorithm, equation (\ref{eq:Eulerexp}) is also solved using an ADM procedure, which reads:
\begin{itemize}
 \item \bfseries Initialization: \normalfont choose $\left( s_0^{(1)}, \ldots, s_0^{(d)} \right) \in V_1 \times \cdots \times V_d$ and set $m=1$; 
\item \bfseries Iterate on $m=1, \ldots , m_{max}$:\normalfont
\begin{itemize}
\item \bfseries Iterate on $j=1, \ldots, d$: \normalfont find $s_m^{(j)}\in V_j$ such that for all $\delta s^{(j)} \in V_j$,
$$
a\left( u_{n-1} + z_m^{(j)}, \delta z_m^{(j)} \right) - \lambda_{n-1}\langle u_{n-1} + z_m^{(j)}, \delta z_m^{(j)}\rangle = 0,
$$
where
$$
z_m^{(j)} = s_m^{(1)} \otimes \cdots \otimes s_m^{(j-1)}\otimes s_m^{(j)} \otimes s_{m-1}^{(j+1)} \otimes \cdots \otimes s_{m-1}^{(d)},
$$
and 
$$
\delta z_m^{(j)} = s_m^{(1)} \otimes \cdots \otimes s_m^{(j-1)}\otimes \delta s^{(j)} \otimes s_{m-1}^{(j+1)} \otimes \cdots \otimes s_{m-1}^{(d)};
$$
\end{itemize}
\item set $\left( r_n^{(1)}, \ldots, r_n^{(d)} \right)  = \left( s_m^{(1)} , \ldots , s_m^{(d)}\right)$. 
\end{itemize}
We observe numerically that this algorithm usually converges quite fast. However, we have noticed cases when this ADM procedure does not converge, which leads 
us to think that there may not always exist solutions $z_n\neq 0$ to (\ref{eq:Eulerexp}), even if $u_{n-1}$ is not an eigenvector associated to $a(\cdot, \cdot)$.

 \subsection{Implementation of the orthogonal versions of the greedy algorithms}

An equivalent formulation of (\ref{eq:orthopt}) is the following: find $\left(c_0^{(n)}, \ldots, c_n^{(n)} \right)\in \R^{n+1}$ such that
\begin{equation}\label{eq:orthv2}
\left( c_0^{(n)}, \ldots, c_n^{(n)}\right) \in \mathop{\rm argmin}_{(c_0, \ldots, c_n) \in \R^{n+1}, \; \|c_0 u_0 + c_1 z_1 + \cdots + c_nz_n\|^2 = 1} a\left(c_0 u_0 + c_1 z_1+ \cdots + c_nz_n\right).
\end{equation}
Actually, for all $0\leq k,l\leq n+1$, denoting by (using the abuse of notation $z_0 = u_0$):
\begin{align*}
\cB_{kl} & := \langle z_k, z_l\rangle\\
 \cA_{kl} & := a(z_k, z_l)\\
\end{align*}
and by $\cA:=\left(\cA_{kl}\right) \in \R^{(n+1)\times (n+1)}$ and $\cB:=\left( \cB_{kl} \right) \in \R^{(n+1)\times (n+1)}$, the 
vector $C^{(n)} = (c_0^{(n)}, \ldots , c_n^{(n)})\in \R^{n+1}$ is a solution of (\ref{eq:orthv2}) if and only if $C$ is an eigenvector associated to the smallest eigenvalue of 
the following generalized eigenvalue problem: 
$$
\left\{
\begin{array}{l}
\mbox{find }(\tau, C)\in \R\times \R^{n+1}\mbox{ such that } C^T\cB C = 1 \mbox{ and }\\
\cA C  =\tau \cB C, \\
\end{array}
\right .
$$
which is easy to solve in practice provided that $n$ remains small enough.

\section{Numerical results}\label{sec:numtest}

We present here some numerical results obtained with these algorithms (PRaGA, PReGA, PEGA and their orthogonal versions) on toy examples involving only 
two Hilbert spaces ($d=2$). We refer the reader to~\cite{Chinesta-QC} for numerical examples involving a larger number of variables. Section~\ref{sec:toy} presents basic numerical tests performed with small-dimensional matrices, which lead us to think that the greedy algorithms presented above converge in general towards the lowest eigenvalue of the bilinear form under consideration, 
except in pathological situations which are not likely to be encountered in practice. In Section~\ref{sec:plate}, the first buckling mode of a microstructured plate with defects 
is computed using these algorithms.

\subsection{A toy problem with matrices}\label{sec:toy}

In this simple example, we take $V = H = \R^{N_x\times N_y}$, $V_x = \R^{N_x}$ and $V_y = \R^{N_y}$ for some $N_x, N_y\in \N^*$ (here typically $N_x = N_y  = 51$). Let $D^{1x}, D^{2x} \in \R^{N_x\times N_x}$ and $D^{1y}, D^{2y} \in \R^{N_y\times N_y}$ be  (randomly chosen) symmetric definite positive matrices. We aim at computing the lowest eigenstate of the symmetric bilinear form 
$$
a(U,V) = \mbox{\rm Tr}\left( U^T (D^{1x} V D^{1y} + D^{2x} V D^{2y})\right),  
$$
or, in other words, of the symmetric fourth order tensor $A$ defined by
$$
\forall 1\leq i,k\leq N_x, \; 1\leq j,l \leq N_y, \quad A_{ij,kl} = D^{1x}_{ik}D^{1y}_{jl} + D^{2x}_{ik}D^{2y}_{jl}.
$$

Let us denote by $\mu_1$ the lowest eigenvalue of the tensor $A$, by $I$ the identity operator, and by $P_{\mu_1} \in \cL(\R^{N_x\times N_y})$ the orthogonal projector onto the eigenspace of 
$A$ associated with 
$\mu_1$. Figure~2 shows the decay of the error on the eigenvalues 
$\log_{10}(|\mu_1 - \lambda_n|)$ and of the error on the eigenvectors $\log_{10}(\|(I-P_{\mu_1})U_n\|_F)$, where $\|\cdot\|_F$ denotes the Frobenius norm of $\R^{N_x\times N_y}$,
 as a function of $n$ for the three algorithms and their orthogonal versions.  

These tests were performed with several matrices $D^{1x}, D^{1y}, D^{2x}, D^{2y}$, either drawn randomly or chosen such that the eigenspace associated 
with the lowest eigenvalue 
is of dimension greater than $1$. In any case, the three greedy algorithms converge towards a particular eigenstate associated with the lowest eigenvalue of the tensor $A$. Besides, the rate of convergence always seems to be exponential with respect to $n$. The error on the eigenvalues decays twice as fast as the error 
on the eigenvectors, as usual when dealing with the approximation of linear eigenvalue problems. 

We observe that the PRaGA and PEGA have similar convergence properties with respect to the number of iterations $n$. The behaviour of the PReGA
 strongly depends on the value $\nu$ chosen in (HA): the larger $\nu$, 
the slower the convergence of the PReGA. To ensure the efficiency of this method, it is important to choose the numerical parameter $\nu \in \R$ appearing in (\ref{eq:ascal})  as small as possible 
so that (HA) remains true. If the value of $\nu$ is well-chosen, the PReGA may converge as fast as the PRaGA or the PEGA, as illustrated in Section~\ref{sec:plate}.  
In the example presented in Figure~2 where $\nu$ is chosen to be $0$ and $\mu_1 \approx 116$, we can clearly see that the rate of convergence of the PReGA is poorer than the rates of the PRaGA and PEGA.

We also observe that the use of the ORaGA, OReGA and OEGA, instead of the pure versions of the algorithms, improves the convergence rate with respect to the number of iterations $n\in\N^*$. 
However, as $n$ increases, the cost of the $n$-dimensional optimization problems (\ref{eq:orthopt}) becomes more and more significant.

\begin{figure}[h]
\begin{center}
\psfrag{OrthoExp}[][][0.6]{OEGA}
\psfrag{Explicit}[][][0.6]{PEGA}
\psfrag{OrthoRes}[][][0.6]{OReGA}
\psfrag{Residual}[][][0.6]{PReGA}
\psfrag{OrthoRay}[][][0.6]{ORaGA}
\psfrag{Rayleigh}[][][0.6]{PRaGA}
\psfrag{logerrval}[][][0.8]{$\log_{10}\left(|\lambda_n - \mu_1|\right)$}
\psfrag{logerrvec}[][][0.8]{$\log_{10}\left(\|(I-P_{\mu_1})U_n\|_F\right)$}
\psfrag{n}[][][0.8]{$n$}
 \includegraphics*[width = 8cm]{./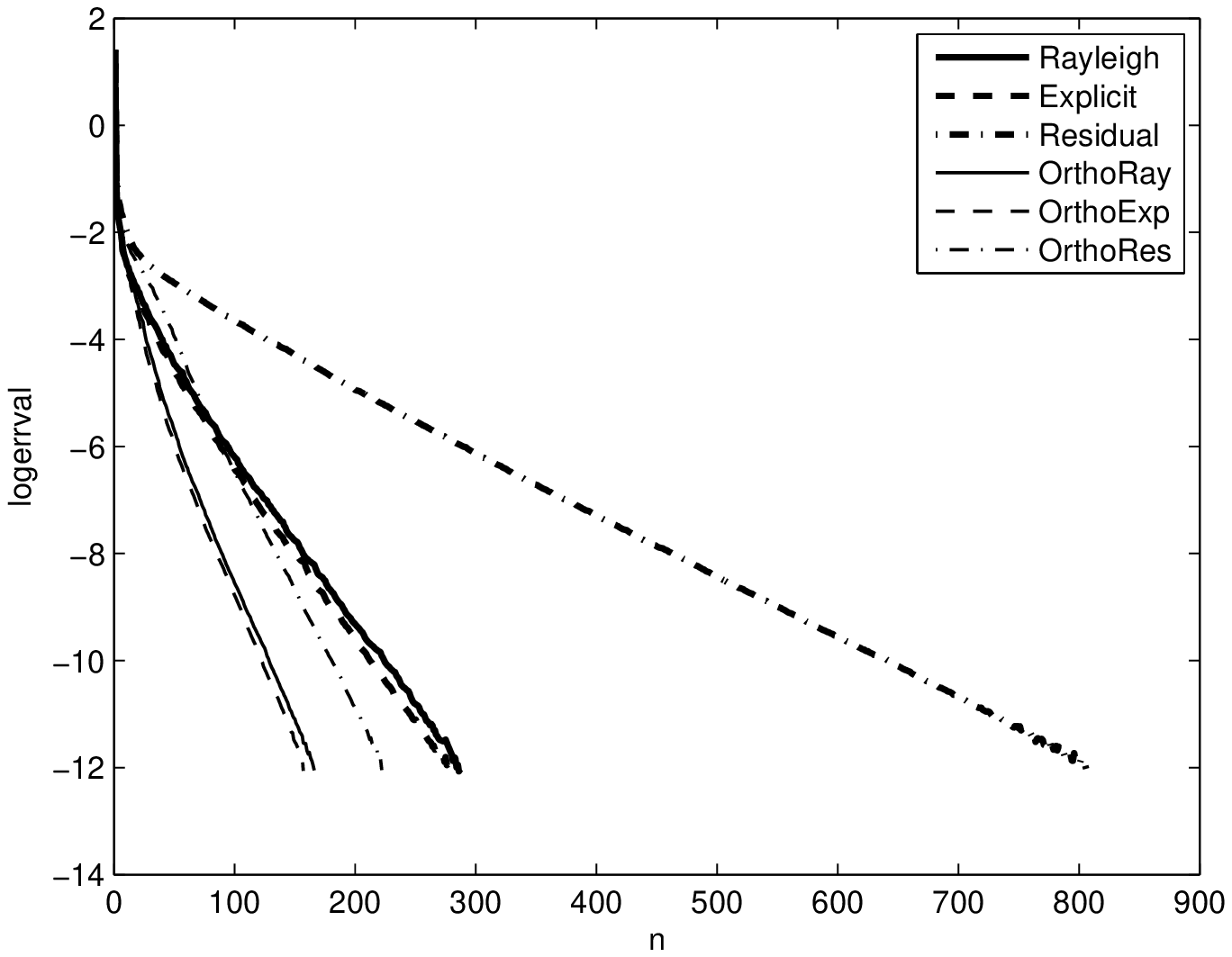}
 \includegraphics*[width = 8cm]{./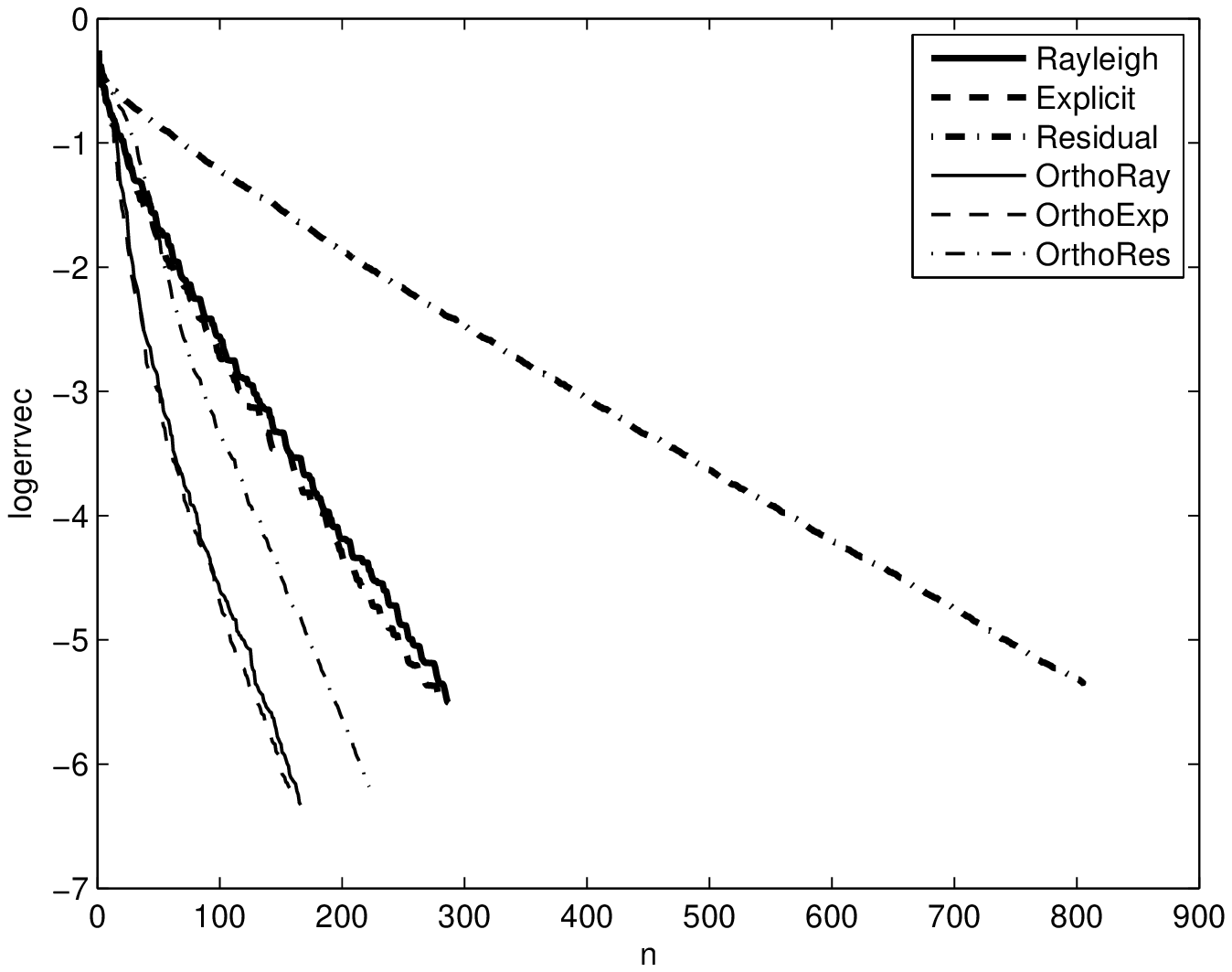}

\caption{Decay of the error of the three algorithms and their orthogonal versions: eigenvalues (left) and eigenvectors (right).}
\end{center}
\end{figure}

\subsection{First buckling mode of a microstructured plate with defects}\label{sec:plate}

We now consider the more difficult example of the computation of the first buckling mode of a plate~\cite{Bazant}. We first describe the continuous model, then detail its  discretization. 

\medskip

The plate is composed of two linear elastic materials, with different Young's moduli $E_1 = 1$ and $E_2 = 20$ respectively and same Poisson's ratio $\nu_{\rm P} = 0.3$. The rectangular reference configuration of the thin plate is $\Omega = \Omega_x\times \Omega_y$ with $\Omega_x = (0,1)$ and $\Omega_y = (0,2)$. 
The composition of the plate in the $(x,y)$ plane is represented in Figure~3: the black parts represent regions occupied by the first material and 
the white parts indicate the location of the second material.  

\begin{figure}[h]
\begin{center}
\psfrag{x}[][][1.0]{$x$ }
\psfrag{y}[][][1.0]{$y$ }
\psfrag{0}[][][0.7]{$0$}
\psfrag{1}[][][0.7]{$1$}
\psfrag{2}[][][0.7]{$2$}
\psfrag{Bas}[][][1.5]{$\Gamma_b$}
\psfrag{Cote}[][][1.5]{$\Gamma_s$}
\psfrag{Side}[][][1.5]{$\Gamma_s$}
\psfrag{Gamma}[][][1.5]{$\Gamma_t$}
 \includegraphics[width=6cm]{./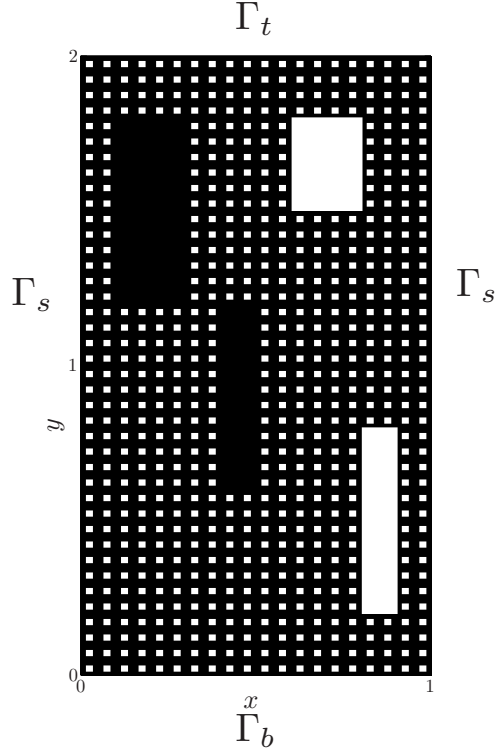}
\caption{Composition of the plate.}
\end{center}
\end{figure}

We denote by $(u_x,u_y, v): \Omega_x\times \Omega_y \to \R^3$ the displacement field of the plate, respectively in the $x$, $y$ and outer-plane direction, and by $u:=(u_x,u_y)$. 

The bottom part $\Gamma_b := [0,1]\times\{0\}$ of the plate is fixed, and a constant force $F = -0.05$ is applied in the $y$ direction on its top part
$\Gamma_t:= [0,1]\times\{2\}$. The sides of the plate $\Gamma_s:= \left(\{0\}\times \Omega_y\right) \cup \left(\{1\}\times \Omega_y \right)$ are free, 
and the outerplane displacement fields of the plate (and their derivatives) are imposed to be zero on the boundaries $\Gamma_b \cup \Gamma_t$.

The Hilbert spaces of kinematically admissible displacement fields are
$$
V^u:=\left\{u = (u_x,u_y)\in \left(H^1(\Omega_x \times \Omega_y)\right)^2, \; u_x (x,0) = u_y(x,0) = 0 \mbox{ for almost all } x\in \Omega_x \right\},
$$
and
$$
V^v := \left\{ v \in H^2(\Omega_x\times \Omega_y), \; v(x,0) = v(x,2)  = \frac{\partial v}{\partial y}(x,0)  = \frac{ \partial v}{\partial y}(x,2)  = 0 \mbox{ for almost all } x\in \Omega_x\right\}.
$$

For a displacement field $(u,v)\in V^u\times V^v$, the strain tensor is composed of two parts: 
\begin{itemize}
 \item a membrane strain: $\epsilon(u,v) = \epsilon_u(u) +\epsilon_v(v)$ is the sum of two parts; 
the first part $\epsilon_u(u)$ only depends on the inner-plane components of the displacement field
 $$
\epsilon_u (u):= \left[
                                          \begin{array}{cc}
                                          \frac{\partial u_x}{\partial x} & \frac{1}{2}\left( \frac{\partial u_x}{\partial y} + \frac{\partial u_y}{\partial x}\right)\\
                                          \frac{1}{2}\left( \frac{\partial u_x}{\partial y} + \frac{\partial u_x}{\partial y}\right) & \frac{\partial u_y}{\partial y}\\
                                          \end{array}
       \right],
$$
and the second part $\epsilon_v(v)$ only depends on the outer-plane component of the displacement field
$$
\epsilon_v(v) := \left[
                                          \begin{array}{cc}
                                           \frac{1}{2}\left( \frac{\partial v}{\partial x}\right)^2 & \frac{1}{2}\left( \frac{\partial v}{\partial x}\frac{\partial v}{\partial y}\right)\\
                                          \frac{1}{2}\left(\frac{\partial v}{\partial x}\frac{\partial v}{\partial y}\right) & \frac{1}{2}\left( \frac{\partial v}{\partial y}\right)^2\\
                                          \end{array}
    \right];
$$
\item a curvature strain: $\chi(v)$, which only depends on $v$:
 $$
\chi(v):= \left[
                                          \begin{array}{cc}
                                           \frac{\partial^2 v}{\partial x^2} &  \frac{\partial^2 v}{\partial x\partial y}\\
                                          \frac{\partial^2 v}{\partial x\partial y} & \frac{\partial^2 v}{\partial y^2}\\
                                          \end{array}
                                         \right].
 $$
\end{itemize}

The potential energy of the plate, 
\begin{align*}
W \; : \; V^u \times V^v & \to  \R \\
(u,v) & \mapsto  W(u,v)\\
\end{align*}
is defined as follows (we drop here the dependence in $(u,v)$ of the strain 
fields for the sake of clarity): for all $(u,v)\in V^u\times V^v$,
\begin{align*}
W(u,v)& :=  \int_{\Omega_x\times \Omega_y} \frac{E(x,y)h}{2(1-\nu_{\rm P}^2)} \left[ \nu_{\rm P} \left(\mbox{Tr} \epsilon\right)^2 + (1-\nu_{\rm P}) \epsilon :\epsilon \right]\,dx\,dy \quad \quad \mbox{(membrane energy)}\\
&+  \int_{\Omega_x\times \Omega_y} \frac{E(x,y)h^3}{24(1-\nu_{\rm P}^2)} \left[ \nu_{\rm P} \left(\mbox{Tr} \chi\right)^2 + (1-\nu_{\rm P})\chi : \chi \right]\,dx\,dy  \quad \quad \mbox{(bending energy)}\\
&-  \int_{\Omega_x} Fu_y(x, 2)\,dx, \quad \mbox{ (external forces)}\\
\end{align*}
where $h$ is the thickness of the plate.

A stationary equilibrium of the plate $(u^0, v^0)\in V^u\times V^v$ is a kinematically admissible displacement field such that $W'\left(u^0,v^0\right) = 0$, where $W'(u^0, v^0)$ denotes 
the derivative of $W$ at $(u^0,v^0)$. We consider here the particular stationary equilibrium of the plate $(u^0,v^0)\in V^u\times V^v$ such that $v^0 = 0$ and 
$u^0\in V^u$ is the unique solution of the minimization problem:
\begin{equation}\label{eq:minplate}
u^0 = \mathop{\mbox{\rm argmin}}_{u\in V^u} \cE(u),
\end{equation}
where $\cE:V^u \to \R$ is defined as
$$
\forall u \in V^u, \quad \cE(u):= \int_{\Omega_x\times \Omega_y} \frac{E(x,y)h}{2(1-\nu_{\rm P}^2)} \left[ \nu_{\rm P} \left(\mbox{Tr} \epsilon_u\right)^2 + (1-\nu_{\rm P})\epsilon_u : \epsilon_u \right]\,dx\,dy- \int_{\Omega_x} Fu_y(x, 2)\,dx.
$$
The energy functional $\cE$ depends quadratically on $V^u$, so that the minimization problem (\ref{eq:minplate}) can be solved numerically using standard (PGA or OGA) greedy algorithms for unconstrained 
minimization problems such as those described in Section~\ref{sec:convex}. 
A suitable dictionary can be chosen as:
$$
\Sigma^u:= \left\{ (r_x\otimes s_x, r_y\otimes s_y), \; r_x, r_y \in V_x^u, \; s_x, s_y \in V_y^u \right\},
$$
where
$$
V_x^u:=H^1(\Omega_x), \quad V_y^u:= \left\{ s\in H^1(\Omega_y), \; s(0) = 0 \right\}.
$$
Then, $\Sigma^u$ and $\cE$ satisfy assumptions (H$\Sigma 1$), (H$\Sigma 2$), (H$\Sigma 3$), (HE1) and (HE2), so that the theoretical convergence results mentioned in Section~\ref{sec:convex} 
hold. 

\medskip

There is \itshape buckling \normalfont 
if and only if the smallest eigenvalue of the Hessian $A^0:= W''(u^0,v^0)$ is negative. An associated eigenvector is the \itshape first buckling mode \normalfont 
of the plate. Since $v^0 = 0$, for all $(u^1, v^1), (u^2,v^2) \in V^u\times V^v$,  
$$
A^0\left( (u^1,v^1), (u^2,v^2) \right) = A^0_u(u^1,u^2) + A^0_v(v^1,v^2),
$$
where
 $$
 A^0_u(u^1,u^2):= 2 \int_{\Omega_x\times \Omega_y} \frac{E(x,y)h}{(1-\nu_{\rm P}^2)} \left[ \nu_{\rm P} \mbox{Tr}\epsilon_{u}(u^1) \mbox{Tr}\epsilon_{u}(u^2) + (1-\nu_{\rm P}) \epsilon_{u}(u^1) : \epsilon_{u}(u^2) \right]\,dx\,dy
 $$
and
 \begin{align*}
 A^0_v(v^1,v^2) & :=  2\int_{\Omega_x\times \Omega_y} \frac{E(x,y)h^3}{12(1-\nu_{\rm P}^2)} \left[ \nu_{\rm P} \mbox{Tr} \chi(v^1)\mbox{Tr}\chi(v^2) + (1-\nu_{\rm P})\chi(v^1) : \chi(v^2) \right]\,dx\,dy\\
 &+  2 \int_{\Omega_x\times \Omega_y} \frac{E(x,y)h}{(1-\nu_{\rm P}^2)} \left[ \nu_{\rm P} \mbox{Tr} \epsilon(u^0,v^0) \mbox{Tr} e(v^1,v^2) + (1-\nu_{\rm P})\epsilon(u^0,v^0) : e(v^1,v^2) \right]\,dx\,dy,\\
 \end{align*}
with
 $$
e(v^1,v^2) := \left[
 \begin{array}{cc}
  \frac{\partial v^1}{\partial x}\frac{\partial v^2}{\partial x} & \frac{1}{2}\left( \frac{\partial v^1}{\partial x}\frac{\partial v^2}{\partial y} + \frac{\partial v^1}{\partial y}\frac{\partial v^2}{\partial x} \right)\\
 \frac{1}{2}\left( \frac{\partial v^1}{\partial x}\frac{\partial v^2}{\partial y} + \frac{\partial v^1}{\partial y}\frac{\partial v^2}{\partial x} \right) &  \frac{\partial v^1}{\partial y}\frac{\partial v^2}{\partial y}\\
 \end{array}
 \right].
 $$

The smallest eigenvalue of $A^0$ is thus the minimum of the smallest eigenvalues of $A^0_u$ and $A^0_v$. The bilinear form $A^0_u$ is coercive on $V^u\times V^u$ 
so that its smallest eigenvalue is positive. Determining whether the plate buckles or not amounts to computing the smallest eigenvalue of $A^0_v$
 and checking its sign. We therefore have to compute the lowest eigenvalue of the linear elliptic eigenvalue problem
\begin{equation}\label{eq:eigplate}
\left\{
\begin{array}{l}
 \mbox{find }(v,\mu) \in V^v \times \R \mbox{ such that }\\
\forall w\in V^v, \; A^0_v(v,w) = \mu \langle v,w\rangle_{L^2(\Omega_x\times \Omega_y)}.
\end{array}
\right.
\end{equation}
The Hilbert spaces $V^v$ and $H^v:= L^2(\Omega_x \times \Omega_y)$ satisfy assumption (HV) and $A^0_v$ is a symmetric continuous bilinear form on $V^v \times V^v$ 
satisfying (HA). The set $\Sigma^v$ defined as
$$
\Sigma^v:= \left\{ r\otimes s, \; r\in V^v_x, \; s\in V^v_y \right\},
$$
where
$$
V^v_x:= H^2(\Omega_x) \quad \mbox{ and } V^v_y:= \left\{ s\in H^2(\Omega_y), \; s(0) = s'(0) = s(2) = s'(2) = 0\right\} ,
$$
forms a dictionary of $V^v$. The theoretical convergence results we presented for the PRaGA and the PReGA then hold. We are going to compare their numerical behaviour with the PEGA. 

\medskip

Let us now make precise the discretization spaces used in order to solve (\ref{eq:minplate}) and (\ref{eq:eigplate}). The spaces $V_x^u$ and $V_y^u$ are discretized using $\P_1$ finite elements over uniform meshes of size  $\Delta x=\Delta y=2 \times 10^{-3}$. Denoting by $\Vt_x^u$ and  $\Vt_y^u$ the so-obtained finite element spaces, and by $\left( \phi_i^x \right)_{0\leq i \leq N_x}$ and $ \left(\phi_j^y \right)_{0 \leq j \leq N_y}$ the corresponding canonical basis functions, the discretization space is equal to 
$$
\Vt^u:= \mbox{\rm Span}\left\{ (\phi_i^x \otimes \phi_j^y, \phi_{i'}^x \otimes \phi_{j'}^y), \; 0\leq i,i'\leq N_x, \; 1\leq j,j' \leq N_y \right\},
$$
and the discretized dictionary is given by
$$
\Sigmat^u:= \left\{ (\rt_x \otimes \st_x, \rt_y \otimes \st_y), \; \rt_x, \rt_y \in \Vt^u_x, \; \st_x, \st_y \in \Vt^u_y \right\}.
$$
The set $\Sigmat^u$ and the energy function $\widetilde{\cE}$, the restriction of $\cE$ to the space $\Vt^u$, still satisfy assumptions (H$\Sigma 1$), (H$\Sigma 2$), (H$\Sigma 3$), 
(HE1) and (HE2). 
The PGA can then be applied to approximate the solution $\widetilde{u}_0$ of the discretized version of (\ref{eq:minplate}): 
\begin{equation}\label{eq:minconvV}
\widetilde{u}_0  = \mathop{\rm argmin}_{\ut\in \Vt^u}\widetilde{\cE}(\ut).
\end{equation}
To solve (\ref{eq:minconvV}), we performed 30 iterations of the PGA (which was enough to ensure convergence) and obtained an approximation of $\widetilde{u}_0$, which is 
used in the computation of the buckling mode of the plate.

\medskip

Problem (\ref{eq:eigplate}) is discretized using Hermite finite elements (cubic splines) on the one-dimensional uniform meshes used to solve (\ref{eq:minplate}). This discretization method gives rise to the finite element discretization spaces $\Vt_x^v \subset V_x^v$ and $\Vt_y^v \subset V_y^v$. The total discretization space is then
$$
\Vt^v :=\Vt_x^v \otimes \Vt_y^v= \mbox{\rm Span} \Sigmat^v \quad \mbox{where} \quad 
\Sigmat^v:= \left\{ \rt\otimes \st , \; \rt \in \Vt^v_x, \; \st\in \Vt^v_y\right\}.
$$  
The discretized version of the eigenvalue problem (\ref{eq:eigplate}) consists in computing the lowest eigenstate of the problem 
\begin{equation}\label{eq:eigplatedisc}
\left\{
\begin{array}{l}
 \mbox{find }(v,\mu) \in \Vt^v \times \R \mbox{ such that }\\
\forall w\in \Vt^v, \; A^0_v(v,w) = \mu \langle v,w\rangle_{L^2(\Omega_x\times \Omega_y)}.
\end{array}
\right.
\end{equation} 
Assumptions (HV), (HA), (H$\Sigma 1$), (H$\Sigma 2$) and (H$\Sigma 3$) are also satisfied in this discretized setting so that the 
above greedy algorithms can be carried out. We have performed the PRaGA, PReGA and PEGA on this problem. The approximate eigenvalue is found to be $\lambda \approx 1.53$.
 At each iteration $n\in\N^*$, the algorithms produce an approximation $\lambda_n$ of the eigenvalue and an approximation $u_n\in \Vt^v$ of the associated eigenvector.

\medskip

The smallest eigenvalue $\mu_1$ of $A^0_v$, and an associated eigenvector $\psi_1$, are computed using an inverse power iteration algorithm. 
Figure~4 shows the decay of the error on the eigenvalue, and on the eigenvector in the $H^2(\Omega_x\times \Omega_y)$ norm 
as a function of $n\in\N^*$, for the PRaGA, PReGA and PEGA. More precisely, the quantities $ \log_{10}\left( \frac{|\lambda_n - \mu_1|}{|\mu_1|}\right)$
and $\log_{10}\left(\frac{\|u_n - \psi_1\|_{H^2(\Omega_x\times \Omega_y)}}{\|\psi_1\|_{H^2(\Omega_x\times \Omega_y)}}\right)$ are plotted as a function of $n\in \N^*$. 
As for the toy problem dealt with in the previous section, the numerical behaviors of the PEGA and PRaGA are similar. 
Besides, we observe that the rate of convergence of the PReGA is comparable with those of the other two algorithms. Let us note that we have chosen here $\nu = 0$. 

\begin{figure}[h]
\begin{center}
\psfrag{logerrl}[][][0.7]{$ \log_{10}\left( \frac{|\lambda_n - \mu_1|}{|\mu_1|}\right)$}
\psfrag{logerrvec}[][][0.7]{$\log_{10}\left(\frac{\|u_n - \psi_1\|_{H^2(\Omega_x\times \Omega_y)}}{\|\psi_1\|_{H^2(\Omega_x\times \Omega_y)}}\right)$}
\psfrag{n}[][][1.0]{$n$}
\psfrag{Residual}[][][0.7]{PReGA}
\psfrag{Rayleigh}[][][0.7]{PRaGA}
\psfrag{Explicit}[][][0.7]{PEGA}
 \includegraphics*[width = 8.2cm]{./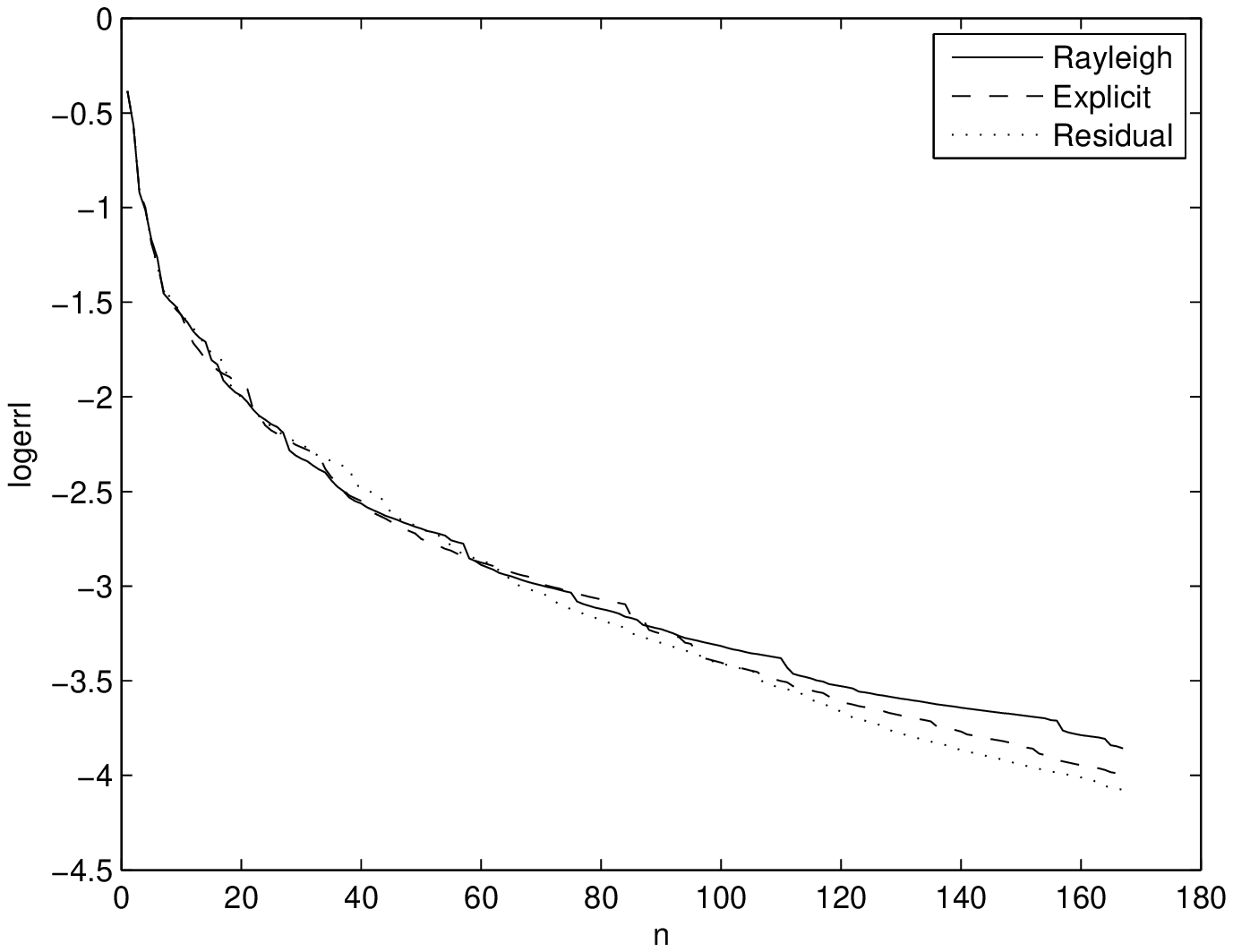}
 \includegraphics*[width = 8.2cm]{./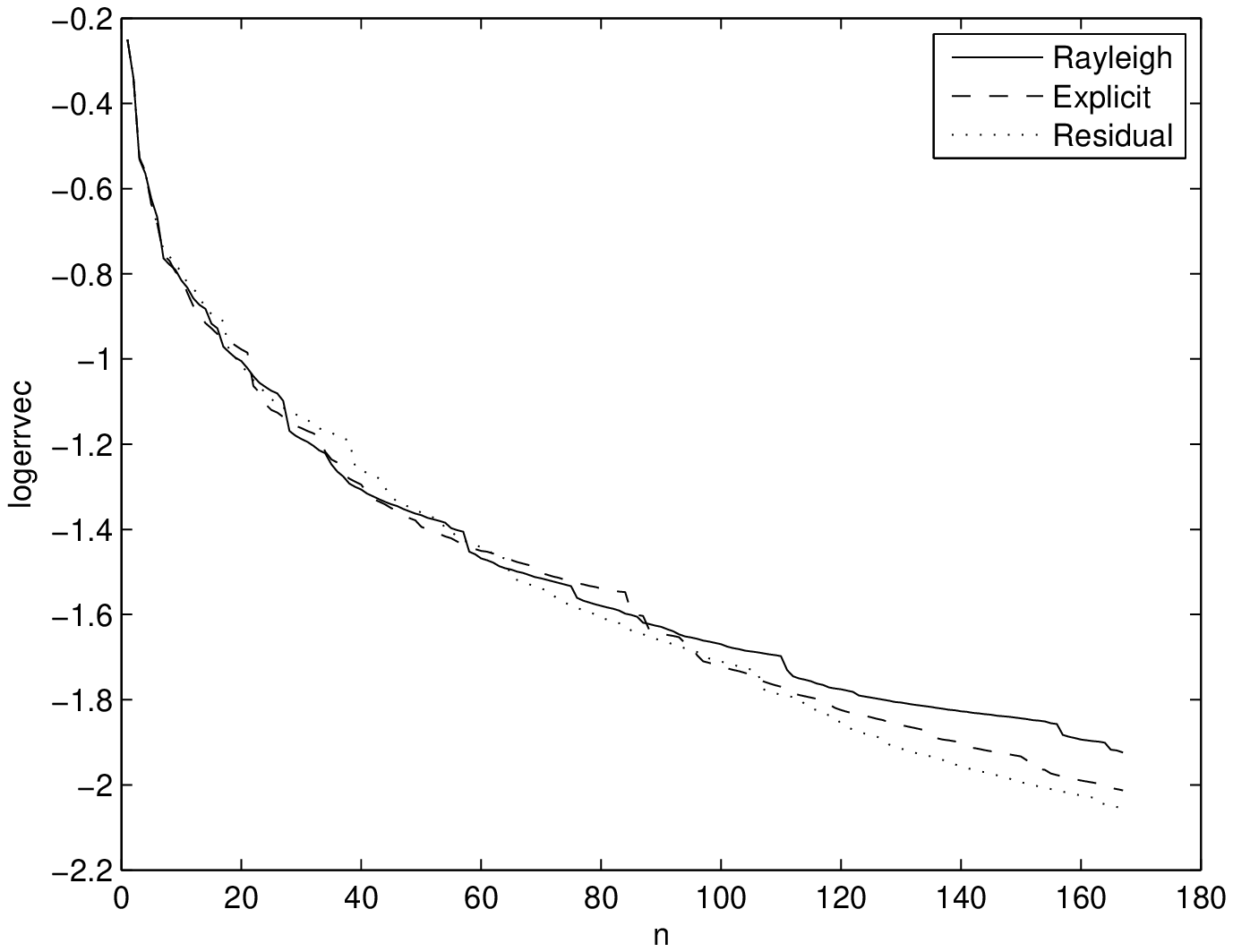}

\caption{Decay of the error as a function of $n$ for the PRaGA, PReGA and PEGA: on the eigenvalue (left) and on the eigenvector in the $H^2(\Omega_x\times \Omega_y)$ norm (right).}
\end{center}
\end{figure}

The isolines of the approximation $u_n$ given by the PRaGA are drawn in Figure~5 for different values of $n$ 
(the approximations given by the other two algorithms 
are similar). We can observe that the influence of the different defects of the plate appears gradually with $n\in\N^*$. 

\begin{figure}[h]
\begin{center}
\psfrag{x}[][][1.0]{$x$}
\psfrag{y}[][][1.0]{$y$}
\psfrag{n=1}[][][1.0]{$n=1$}
\psfrag{n=5}[][][1.0]{$n=5$}
\psfrag{n=50}[][][1.0]{$n=50$}
 \includegraphics*[width=5.5cm]{./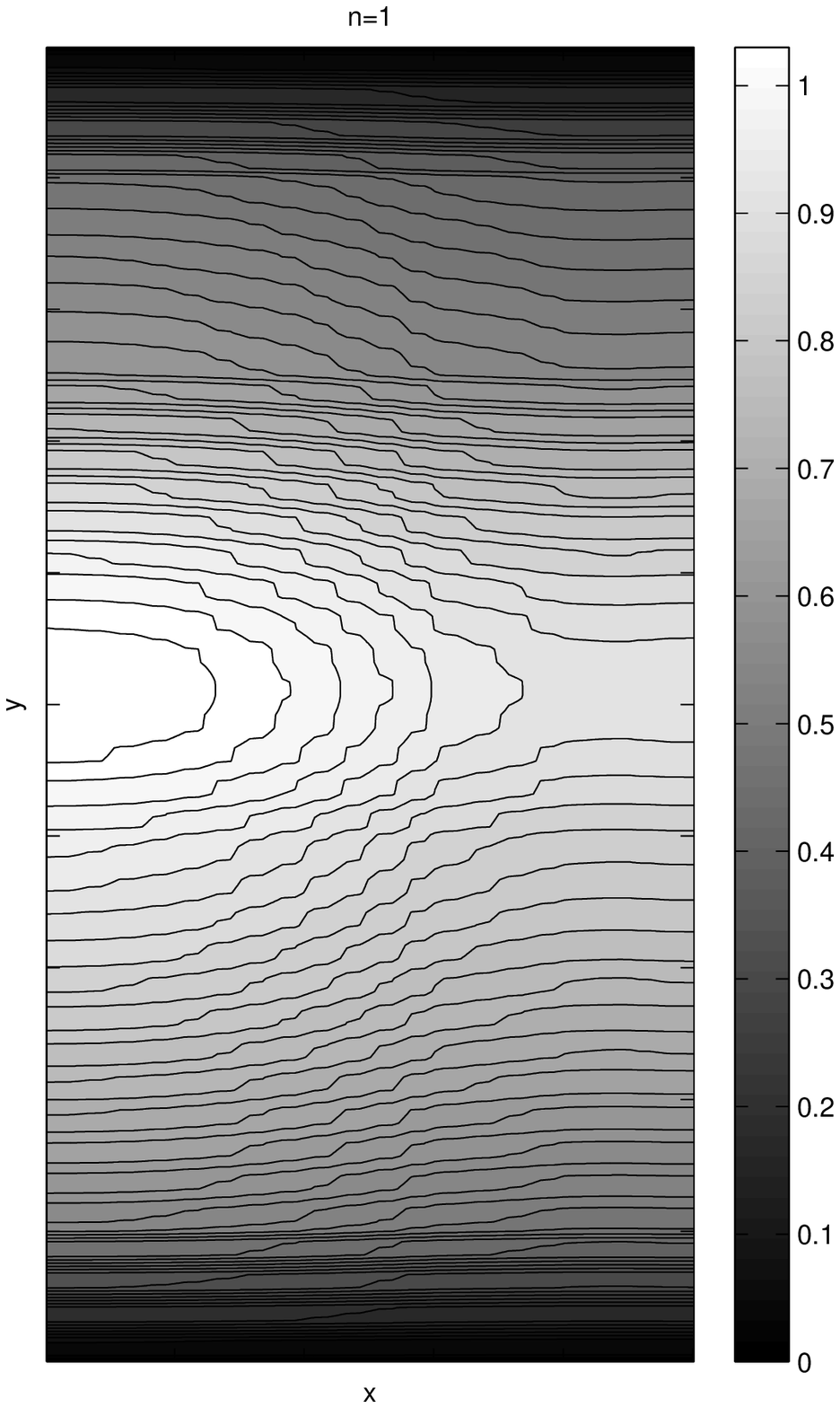}
 \includegraphics*[width=5.5cm]{./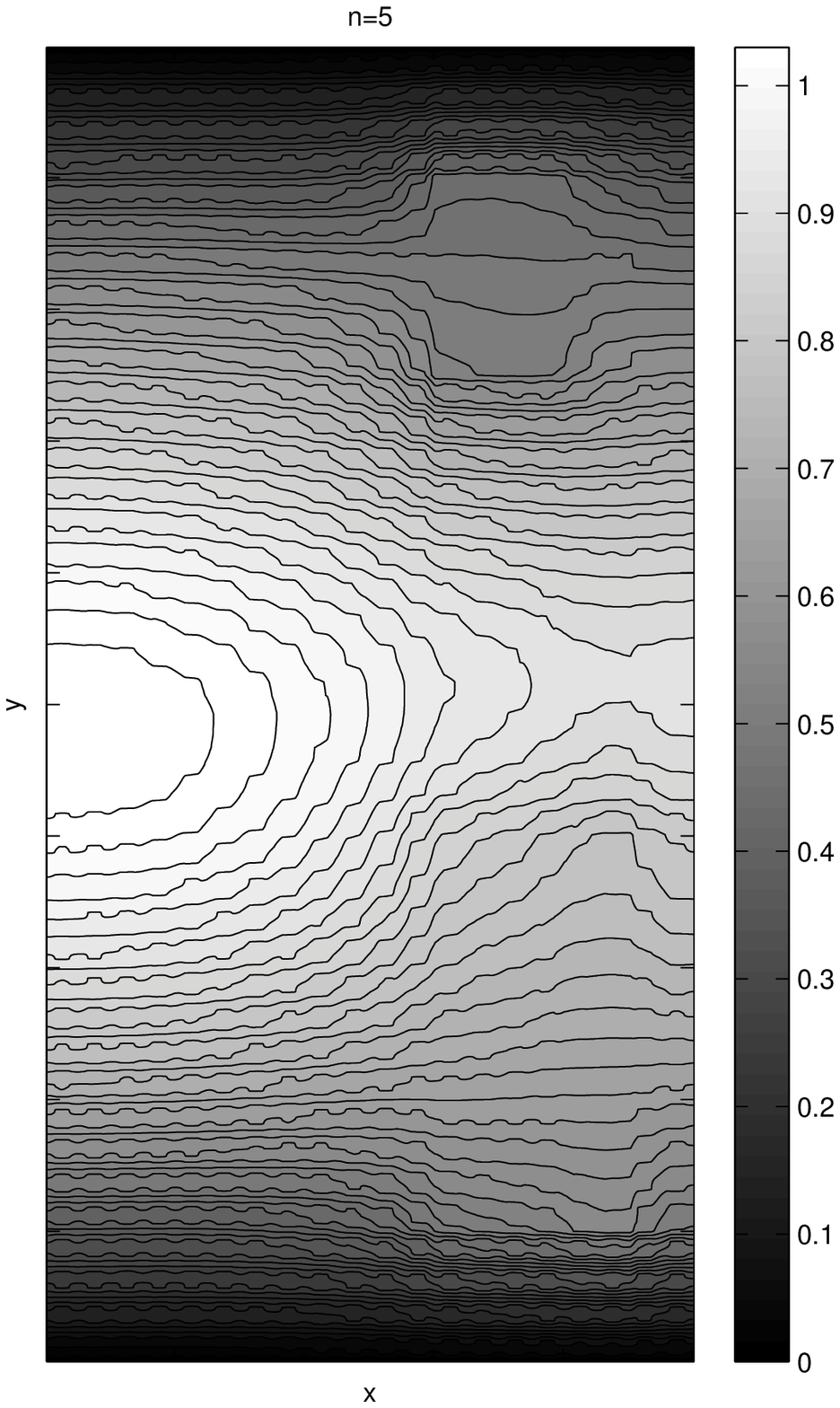}
 \includegraphics*[width=5.5cm]{./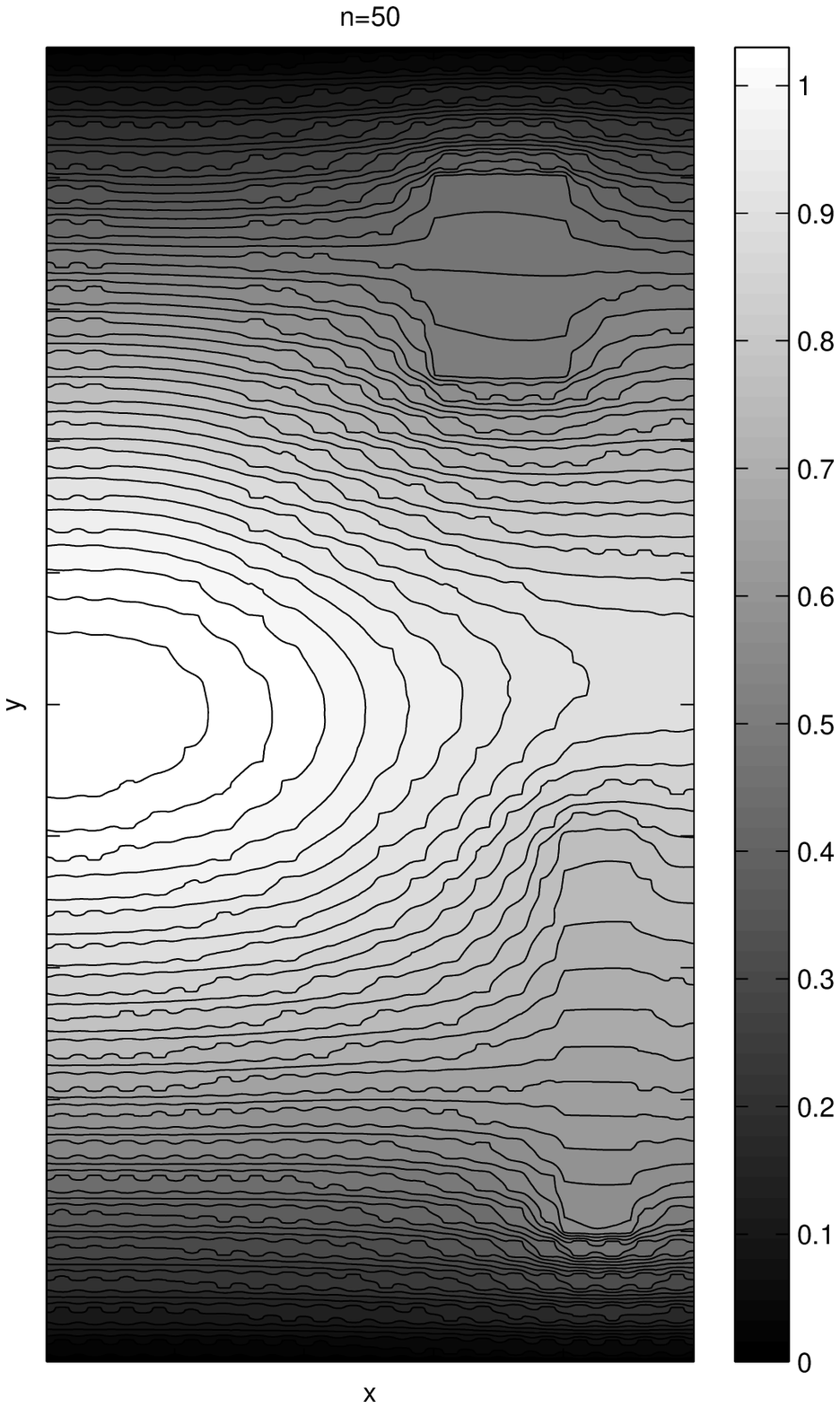}
\caption{Isolines of the approximation of the first buckling mode of the plate given by the Rayleigh quotient algorithm for $n=1$ (left), $n=5$ (center) and $n=50$ (right).}
\end{center}
\end{figure}

\section{Proofs}\label{sec:proof}

\subsection{Proof of  Lemma~\ref{lem:leminit}}

\noindent
{\em Proof that $(i) \Rightarrow (ii)$}

\medskip

Let $z\in \Sigma$. For $\varepsilon >0$ small enough so that $\varepsilon \|z\| < \|w\|$, $w + \varepsilon z \neq 0$. Then, since $\|w\|=1$, $(i)$ implies that
$$
\cJ(w+\varepsilon z)-\cJ(w) = \frac{\left( 2\varepsilon a(w,z) + \varepsilon^2 a(z, z)\right) - \left( 2\varepsilon \langle w, z\rangle + \varepsilon^2 \|z\|^2\right) a(w,w)}{ \|w+ \varepsilon z\|^2} \geq 0.
$$
Letting $\varepsilon$ go to zero, this yields
$$
 a(w,z) - a(w,w) \langle w, z\rangle = 0 \quad \mbox{and} \quad 
  a(z, z) - \|z\|^2 a(w,w) \geq 0.
$$
Using assumption (H$\Sigma 3$), we obtain
$$
\lambda_w := a(w,w) \leq \mathop{\inf}_{z\in \Sigma^*} \frac{a(z, z)}{\|z\|^2}  = \lambda_\Sigma \quad \mbox{and} \quad 
\forall v \in V, \; a(w,v) = \lambda_w \langle w , v \rangle,
$$
where $\Sigma^*$ is defined by (\ref{eq:defsigstar}). Hence \itshape(ii)\normalfont.

\medskip

\noindent
{\em Proof that \itshape(ii)\normalfont $\Rightarrow$ \itshape(i)\normalfont}

\medskip

Using \itshape(ii)\normalfont, similar calculations yield that for all $z\in \Sigma$ such that $w+z \neq 0$, 
\begin{align*}
\cJ(w+z) - \cJ(w)  & = \frac{a(w+ z, w + z)}{\|w +  z\|^2} - a(w,w)\\
& = \frac{ 2 a(w,z) +  a(z,z) - \left( 2 \langle w, z\rangle + \|z\|^2\right) a(w,w)}{ \|w+ z\|^2}\\
& = \frac{a(z,z) - \lambda_w \|z\|^2}{\|w+z\|^2}.\\
\end{align*}
This implies that $\cJ(w+z) - \cJ(w) \geq 0$. Hence \itshape(i) \normalfont since the inequality is trivial in the case when $w + z = 0$.

\subsection{Proof of  Lemma~\ref{lem:lemma2}}

 Let us first prove that (\ref{eq:minipb}) has at least one solution in the case when $w=0$. Let $(z_m)_{m\in\N^*}$ be a minimizing sequence:  $\forall m\in \N^*$, $z_m\in \Sigma$, $\|z_m\| = 1$ and $\dps a(z_m , z_m) \mathop{\longrightarrow}_{m\to \infty} \lambda_\Sigma$. 
The sequence $\left( \|z_m\|_a \right)_{m\in \N^*}$ being bounded, there exists $z_* \in V$ such that $(z_m)_{m\in\N^*}$ 
weakly converges, up to extraction, to some $z_*$ in $V$. By (H$\Sigma 2$), $z_*$ belongs to $\Sigma$. Besides, using (HV), the sequence $(z_m)_{m\in\N^*}$ 
strongly converges to $z_*$ in $H$, so that $\|z_*\| = 1$. Lastly, 
$$
\|z_*\|_a \leq \mathop{\lim}_{m \to \infty} \|z_m\|_a,
$$
which implies that $\dps a(z_*, z_*) = \cJ(z_*) \leq \lambda_\Sigma = \mathop{\lim}_{m\to\infty} a(z_m, z_m)$. Hence, $z_*$ is a minimizer of problem 
(\ref{eq:minipb}) when $w=0$. 

\medskip

Let us now consider $w\in V\setminus \Sigma$ and $(z_m)_{m\in \N^*}$ a minimizing sequence for problem (\ref{eq:minipb}). There exists $m_0\in \N^*$ large enough such that 
for all $m\geq m_0$, $w+z_m \neq 0$.
Let us denote by $\alpha_m:= \frac{1}{\|w + z_m\|}$ and $\zt_m:= \alpha_m z_m$. It holds that $\|\alpha_m w + \zt_m \| = 1$ and 
$\dps a(\alpha_m w + \zt_m, \alpha_m w + \zt_m) \mathop{\longrightarrow}_{m\to \infty} \mathop{\inf}_{z\in \Sigma} \cJ(w + z)$. 

If the sequence $(\alpha_m)_{m\in\N^*}$ is bounded, then so is the sequence $(\|\zt_m\|_a)_{m\in\N^*}$, and reasoning as above, we can prove that there exists a minimizer to problem 
(\ref{eq:minipb}).

To complete the proof, let us now argue by contradiction and assume that, up to the extraction of a subsequence, 
$\dps \alpha_m \mathop{\longrightarrow}_{m\to\infty} + \infty$. 
Since the sequence $\left(\| \alpha_m w + \zt_m\|_a \right)_{m\in\N^*}$ is bounded and for all $m\in\N^*$,
$$
\| \alpha_m w + \zt_m\|_a = \alpha_m \| w + z_m\|_a, 
$$
the sequence $(z_m)_{m\in\N^*}$ strongly converges towards $-w$ in $V$. Using assumption (H$\Sigma 2$), this implies that $w \in \Sigma$, which leads to a contradiction.

\subsection{Proof of Theorem~\ref{th:main} for the PRaGA}\label{sec:proofRayleigh}

Throughout this section, we use the notation of Section~\ref{sec:rayleigh}. Let us point out that from Lemma~\ref{lem:Radef}, all the iterations of the PRaGA are well-defined and the sequence 
$(\lambda_n)_{n\in\N}$ is non-increasing. 

\begin{lemma}\label{lem:ELrayleigh}
For all $n \geq 1$, it holds
\begin{equation}\label{eq:ELn}
a(u_n, z_n) - \lambda_n \langle u_n, z_n\rangle = 0. 
\end{equation}
\end{lemma}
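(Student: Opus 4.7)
The plan is to exploit the cone structure of the dictionary $\Sigma$ (assumption (H$\Sigma 1$)) together with the minimality of $z_n$ for $\cJ(u_{n-1}+\cdot)$ on $\Sigma$. Since $tz_n \in \Sigma$ for every $t\in \R$, the scalar function
\[
g(t) := \cJ\!\left(u_{n-1}+tz_n\right) = \frac{a(u_{n-1}+tz_n,u_{n-1}+tz_n)}{\|u_{n-1}+tz_n\|^2}
\]
must attain its minimum on $\R$ at $t=1$. The case $z_n=0$ is trivial, so I may assume $z_n\neq 0$, in which case $u_{n-1}+tz_n$ vanishes for at most one value of $t$, and this value is not $t=1$ (by Lemma~\ref{lem:Radef}, $\lambda_n<\lambda_\Sigma$ so in particular $u_{n-1}+z_n\neq 0$). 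Hence $g$ is smooth in a neighborhood of $1$, so the first-order optimality condition $g'(1)=0$ applies.

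Next I would write the numerator $N(t)=a(u_{n-1}+tz_n,u_{n-1}+tz_n)$ and denominator $D(t)=\|u_{n-1}+tz_n\|^2$ and compute
\[
N'(1)=2\,a(u_{n-1}+z_n,z_n),\qquad D'(1)=2\,\langle u_{n-1}+z_n,z_n\rangle.
\]
The identity $g'(1)=0$ becomes $N'(1)D(1)=N(1)D'(1)$, that is,
\[
a(u_{n-1}+z_n,z_n)\,\|u_{n-1}+z_n\|^2 \;=\; a(u_{n-1}+z_n,u_{n-1}+z_n)\,\langle u_{n-1}+z_n,z_n\rangle.
\]

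Finally, dividing both sides by $\|u_{n-1}+z_n\|^3$ and recalling that by construction
\[
u_n=\frac{u_{n-1}+z_n}{\|u_{n-1}+z_n\|},\qquad \lambda_n = a(u_n,u_n)=\frac{a(u_{n-1}+z_n,u_{n-1}+z_n)}{\|u_{n-1}+z_n\|^2},
\]
yields $a(u_n,z_n)=\lambda_n\langle u_n,z_n\rangle$, which is (\ref{eq:ELn}). The reasoning is essentially a one-line computation, and no step presents a real obstacle; the only subtlety worth highlighting is the verification that $t=1$ is an interior, smooth point of $g$, for which the non-vanishing of $u_{n-1}+z_n$ guaranteed by Lemma~\ref{lem:Radef} is exactly what is needed.
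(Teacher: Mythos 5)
Your proof is correct and follows essentially the same route as the paper's: both differentiate $t\mapsto \cJ(u_{n-1}+tz_n)$ at its minimum $t=1$ (legitimate since $tz_n\in\Sigma$ by (H$\Sigma 1$)) and rewrite the resulting first-order condition using the definitions of $u_n$ and $\lambda_n$. The only cosmetic difference is the justification of smoothness at $t=1$: the paper notes that $u_{n-1}\notin\Sigma$ forces $u_{n-1}+tz_n\neq 0$ for \emph{all} $t$, whereas you only establish non-vanishing near $t=1$, which is equally sufficient.
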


\begin{proof}
Let us define  $\mathcal{S}: \R \ni t\mapsto \cJ(u_{n-1} + tz_n)$. From Lemma~\ref{lem:Radef}, since $\lambda_0 < \lambda_\Sigma$, all the iterations of 
the PRaGA are well-defined and for all $n\in\N^*$, we have $u_{n-1}\notin \Sigma$. Hence, since $\Sigma$ satisfies (H$\Sigma 1$), for all $t\in \R$, $tz_n\in \Sigma$ and 
$u_{n-1} + tz_n \neq 0$. The function $\mathcal{S}$ is thus differentiable on $\R$ and admits a minimum at $t=1$. The first-order Euler equation at $t=1$ reads
$$
\frac{1}{\|u_{n-1} + z_n\|^2}\left( a(u_{n-1} + z_n, z_n) - \lambda_n \langle u_{n-1}+z_n, z_n\rangle \right) = 0,
$$
which immediatly leads to (\ref{eq:ELn}). 
\end{proof}

In the rest of this Section, we will denote by $\alpha_n = \frac{1}{\|u_{n-1} + z_n\|}$ and $\zt_n = \frac{z_n}{\|u_{n-1} + z_n\|}$, 
so that for all $n\in\N^*$, $u_n = \alpha_n u_{n-1} + \zt_n$. We first prove the following intermediate lemma.

\begin{lemma}\label{lem:lemCV1}
The series $\sum_{n=1}^{+\infty} \|\zt_n\|^2$ and $\sum_{n=1}^{+\infty} \|\zt_n\|_a^2$ are convergent.
\end{lemma}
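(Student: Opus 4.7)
The plan is to derive a key ``energy dissipation'' identity linking the Rayleigh quotient decrement $\lambda_{n-1}-\lambda_n$ to $\zt_n$, and then to sum over $n$ using that $(\lambda_n)_{n\in\N}$ is non-increasing and bounded below. Starting from the decomposition $u_n=\alpha_n u_{n-1}+\zt_n$, I would expand the identities $1=\|u_n\|^2$ and $\lambda_n=a(u_n,u_n)$, and combine them with the Euler equation (\ref{eq:ELn}), suitably rescaled as $a(u_n,\zt_n)=\lambda_n\langle u_n,\zt_n\rangle$, so as to eliminate the cross terms $\langle u_{n-1},\zt_n\rangle$ and $a(u_{n-1},\zt_n)$. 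A direct algebraic manipulation then yields the key identity
\[
\alpha_n^2(\lambda_{n-1}-\lambda_n)=a(\zt_n,\zt_n)-\lambda_n\|\zt_n\|^2.
\]
Since $\Sigma$ is a cone by (H$\Sigma 1$), $\zt_n\in\Sigma$, so $a(\zt_n,\zt_n)\geq\lambda_\Sigma\|\zt_n\|^2$. Combining with $\lambda_n\leq\lambda_0<\lambda_\Sigma$ from Lemma~\ref{lem:Radef} then produces the dissipation estimate
\[
\alpha_n^2(\lambda_{n-1}-\lambda_n)\geq(\lambda_\Sigma-\lambda_0)\,\|\zt_n\|^2.
\]

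Next, assumption (HA) applied to $u_n$ gives $\lambda_n\geq-\nu$, so the non-increasing sequence $(\lambda_n)$ converges, whence $\sum_{n\geq 1}(\lambda_{n-1}-\lambda_n)<\infty$ and $\lambda_{n-1}-\lambda_n\to 0$. Once $\alpha_n$ is shown to be uniformly bounded, the dissipation estimate immediately yields $\sum_{n\geq 1}\|\zt_n\|^2<\infty$; the second series then follows from
$\|\zt_n\|_a^2=a(\zt_n,\zt_n)+\nu\|\zt_n\|^2=\alpha_n^2(\lambda_{n-1}-\lambda_n)+(\lambda_n+\nu)\|\zt_n\|^2,$
which is a sum of two summable terms (recalling that $\lambda_n$ is bounded).

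The main obstacle is precisely the uniform bound on $\alpha_n=1/\|u_{n-1}+z_n\|$, which is not a priori clear since $\|u_{n-1}+z_n\|$ could in principle become arbitrarily small. To handle it, I would rewrite the normalization $1=\|u_n\|^2$ as $\alpha_n^2+2\alpha_n\langle u_{n-1},\zt_n\rangle+\|\zt_n\|^2=1$ and apply Cauchy--Schwarz ($|\langle u_{n-1},\zt_n\rangle|\leq\|\zt_n\|$) to deduce $(\alpha_n-\|\zt_n\|)^2\leq 1$, i.e., $\alpha_n\leq 1+\|\zt_n\|$. Substituting into the dissipation estimate yields the self-consistent inequality
$(\lambda_\Sigma-\lambda_0)\,\|\zt_n\|^2\leq(1+\|\zt_n\|)^2(\lambda_{n-1}-\lambda_n);$
since $\lambda_{n-1}-\lambda_n\to 0$, this rules out $\|\zt_n\|\geq 1$ for $n$ large enough (otherwise it would force $\lambda_{n-1}-\lambda_n\geq(\lambda_\Sigma-\lambda_0)/4$), so $\|\zt_n\|$, and hence $\alpha_n$, stays bounded from some rank on, which is enough to conclude that both series converge.
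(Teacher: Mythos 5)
Your proof is correct and follows essentially the same route as the paper's: both rest on the Euler equation (\ref{eq:ELn}), the telescoping of $\lambda_{n-1}-\lambda_n$, and the lower bound $a(\zt_n,\zt_n)\geq\lambda_\Sigma\|\zt_n\|^2$ combined with $\lambda_n\leq\lambda_0<\lambda_\Sigma$. The only organizational difference is that you eliminate the cross term $\langle u_n,\zt_n\rangle$ exactly, arriving at the clean identity $\alpha_n^2(\lambda_{n-1}-\lambda_n)=a(\zt_n,\zt_n)-\lambda_n\|\zt_n\|^2$ and then controlling $\alpha_n$ separately, whereas the paper keeps the cross term and absorbs it via Cauchy--Schwarz and Young's inequality; both variants invoke $\lambda_{n-1}-\lambda_n\to 0$ at the same final step.
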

\begin{proof}
Let us first prove that the series $\sum_{n=1}^{+\infty} \|\zt_n\|^2$ is convergent. For all $n\in\N^*$, we have
 $$
 a(u_n, u_n) = \frac{a(u_{n-1} + z_n, u_{n-1} + z_n)}{\|u_{n-1} + z_n\|^2}.
 $$
 Thus, using (\ref{eq:ELn}) at the fifth equality,
\begin{align}
\lambda_{n-1} - \lambda_n &= a(u_{n-1}, u_{n-1}) - a(u_n, u_n) \nonumber \\
&=   \frac{a(u_{n-1}, u_{n-1})\left( 2 \langle u_{n-1}, z_n\rangle + \|z_n\|^2\right) -2a(u_{n-1}, z_n) - a(z_n, z_n)}{\|u_{n-1} + z_n\|^2} \nonumber\\ 
&=   \frac{2\left( \lambda_{n-1} \langle u_{n-1} + z_n, z_n \rangle -a(u_{n-1} + z_n, z_n)\right)-\lambda_{n-1}\|z_n\|^2 + a(z_n, z_n) }{\|u_{n-1} + z_n\|^2} \nonumber \\
&=   2\left( \lambda_{n-1}\langle u_n, \zt_n\rangle -a(u_n, \zt_n)\right) + a(\zt_n, \zt_n) - \lambda_{n-1} \|\zt_n\|^2 \nonumber\\
&=  2(\lambda_{n-1} - \lambda_n)\langle u_n, \zt_n\rangle  + a(\zt_n , \zt_n) - \lambda_{n-1} \|\zt_n\|^2 \label{eq:5eq}\\
&\geq   (\lambda_{\Sigma} - \lambda_{n-1}) \|\zt_n\|^2 - 2(\lambda_{n-1} - \lambda_n)|\langle u_n, \zt_n\rangle| \nonumber\\
&\geq   (\lambda_{\Sigma} - \lambda_{n-1}) \|\zt_n\|^2 - 2(\lambda_{n-1} - \lambda_n)\|u_n\|\|\zt_n\| \nonumber \\
&\geq    (\lambda_{\Sigma} - \lambda_{n-1}) \|\zt_n \|^2 - (\lambda_{n-1} - \lambda_n)\|\zt_n\|^2 - (\lambda_{n-1} - \lambda_n). \nonumber \\ \nonumber
\end{align}
 This implies that
 \begin{equation}\label{eq:est}
 2(\lambda_{n-1} - \lambda_n) \geq \left[ (\lambda_\Sigma - \lambda_{n-1}) - (\lambda_{n-1} - \lambda_n) \right] \|\zt_n\|^2.
 \end{equation}
From Lemma~\ref{lem:Radef}, $(\lambda_n)_{n\in\N}$ is a non-increasing sequence. Besides, since it is bounded from below by $\mu_1 = {\mathop{\min}}_{v\in V} \cJ(v)$, it converges 
towards a real number $\dps \lambda = \mathop{\lim}_{n\to +\infty} \lambda_n$ which satisfies $\lambda \leq \lambda_0 < \lambda_\Sigma$. Estimate (\ref{eq:est}) 
implies that there exists $\delta >0$ and $n_0\in \N^*$ such that for all $n\geq n_0$, 
$$
 2(\lambda_{n-1} - \lambda_n) \geq \delta \|\zt_n\|^2.
$$
Hence, the series $\sum_{n=1}^{+\infty} \|\zt_n\|^2$ is convergent, since the series $\sum_{n=1}^{+\infty}( \lambda_{n-1} - \lambda_n)$ is obviously convergent. 

\medskip

Let us now prove that the series $\sum_{n=1}^{+\infty} \|\zt_n\|_a^2$ is convergent. Using (\ref{eq:5eq}), it holds
\begin{align*}
\lambda_{n-1} - \lambda_n &= 2(\lambda_{n-1} - \lambda_n)\langle u_n, \zt_n\rangle  + a(\zt_n, \zt_n) - \lambda_{n-1} \|\zt_n\|^2\\
&\geq  -2(\lambda_{n-1} - \lambda_n)\|u_n\|\|\zt_n\|  + a(\zt_n, \zt_n) - \lambda_{n-1} \|\zt_n\|^2\\
&\geq   - (\lambda_{n-1} - \lambda_n) \|\zt_n\|^2 - (\lambda_{n-1} - \lambda_n) + a(\zt_n, \zt_n) - \lambda_{n-1} \|\zt_n\|^2.
\end{align*}
Thus, 
$$
2 (\lambda_{n-1} - \lambda_n) + (\nu +\lambda_{n-1} + (\lambda_{n-1} - \lambda_n)) \|\zt_n\|^2 \geq \|\zt_n\|_a^2,
$$
which implies that the series $\sum_{n=1}^{+\infty} \|\zt_n\|_a^2$ is convergent since $\nu + \lambda \geq \nu + \mu_1 >0$. 
 \end{proof}

 \begin{proof}[Proof of Theorem~\ref{th:main}]
We know that the sequence $(\lambda_n)_{n \in \N}$ converges to $\lambda$ which implies that the sequence $(\|u_n\|_a)_{n\in\N}$ is bounded. 
Thus, the sequence $(u_n)_{n\in\N}$ converges, up to the extraction of a subsequence, to some $w\in V$, weakly in $V$, and strongly in $H$ from (HV). Let us denote by 
$(u_{n_k})_{k\in\N}$ such a subsequence.
In particular, $\dps \|w\| = \mathop{\lim}_{k\to +\infty} \|u_{n_k}\| = 1$. Let us prove that $w$ is an eigenvector of the bilinear form $a(\cdot, \cdot)$ 
associated to $\lambda$ and that the sequence $(u_{n_k})_{k\in\N}$ strongly converges in $V$ toward $w$. 

Lemma~\ref{lem:lemCV1} implies that $\dps \zt_n \mathop{\longrightarrow}_{n\to\infty} 0$ strongly in $V$, and since $\|u_n\| = \|\alpha_n u_{n-1} + \zt_n\|  = \|u_{n-1} \| = 1$ 
for all $n\in \N^*$, necessarily
 $\dps \alpha_n\mathop{\longrightarrow}_{n\to\infty} 1$. Thus, $z_n = \frac{1}{\alpha_n}\zt_n$ also converges to $0$ strongly in $V$.

Besides, for all $n\geq 1$ and all $z\in \Sigma$, it holds that
$$
\cJ(u_{n-1} + z) \geq \cJ(u_{n-1} + z_n). 
$$
Using the fact that $\|u_{n-1}\| = 1$ and $a(u_{n-1}, u_{n-1})=\lambda_{n-1} $, this inequality also reads 
\begin{equation}\label{eq:eqRanew}
 \begin{array}{l}
\lambda_{n-1} \left[ 2 \langle u_{n-1}, z_n\rangle + \|z_n\|^2 -2 \langle u_{n-1} , z\rangle - \|z\|^2\right] \\
 \quad + \left[ 2a(z, u_{n-1}) + a(z,z) \right] \left[ 1 + 2\langle u_{n-1}, z_n\rangle + \|z_n\|^2\right] \\
 \quad 
- \left[2a(u_{n-1}, z_n) + a(z_n, z_n)\right]\left[1 + 2\langle u_{n-1} , z \rangle + \|z\|^2\right] \geq 0.  \\
 \end{array}
\end{equation}
Besides, $(z_n)_{n\in\N^*}$ strongly converges to $0$ in $V$ and $(\lambda_n)_{n\in\N}$ converges towards $\lambda$. As a consequence, taking $n = n_k+1$ in (\ref{eq:eqRanew}) 
and letting $k$ go to infinity, it holds that 
for all $z\in \Sigma$, 
 $$
- 2\lambda \langle w, z\rangle - \lambda \|z\|^2 + 2a(w,z)+ a(z,z) \geq 0.
 $$
Besides, from (H$\Sigma 1$), for all $\varepsilon >0$ and $z\in \Sigma$, $\varepsilon z\in \Sigma$. Thus, taking $\varepsilon z$ instead of $z$ in the above inequality yields
\begin{equation}\label{eq:epseq}
- 2\lambda \varepsilon \langle w, z\rangle - \lambda\varepsilon^2 \|z\|^2 + 2\varepsilon a(w,z)+ \varepsilon^2 a(z,z) \geq 0.
\end{equation}
Letting $\varepsilon$ go to $0$ in (\ref{eq:epseq}), we obtain that for all $z\in \Sigma$, 
$$
a(w,z) = \lambda \langle w, z\rangle \quad \mbox{and} \quad a(z,z) \geq \lambda \|z\|^2.
$$
Thus, using (H$\Sigma 3$), this implies that for all $v\in V$, $a(w,v) = \lambda \langle w, v\rangle$ 
and $w$ is an $H$-normalized eigenvector of $a(\cdot, \cdot)$ associated to the eigenvalue $\lambda$. Besides, since $\dps a(w,w) = \mathop{\lim}_{k\to\infty} a(u_{n_k}, u_{n_k})$ and 
$\dps \|w\|  = \mathop{\lim}_{k\to\infty} \|u_{n_k}\|$, it holds that $\dps \|w\|_a = \mathop{\lim}_{k\to\infty} \|u_{n_k}\|_a$ and 
the convergence of the subsequence $(u_{n_k})_{k\in\N}$ towards $w$ also holds strongly in  
$V$.

\medskip

Let us prove now that $\dps d_a(u_n, F_\lambda) \mathop{\longrightarrow}_{n\to\infty} 0$. Let us argue by contradiction and assume that there exists $\varepsilon>0$ and
 a subsequence $(u_{n_k})_{k\in\N}$ such that $d_a(u_{n_k}, F_\lambda) \geq \varepsilon$. Up to the extraction of another subsequence, from the results proved above, there exists 
$w\in F_\lambda$ such that $u_{n_k} \rightarrow w$ strongly in $V$. Thus, along this subsequence, 
$$
d_a(u_{n_k}, F_\lambda) \leq \|u_{n_k} - w\|_a \mathop{\longrightarrow}_{n\to\infty} 0,
$$
yielding a contradiction. 

\medskip

Lastly, if $\lambda$ is a simple eigenvalue, the only possible limits of subsequences of $(u_n)_{n\in\N}$ are $w_\lambda$ and $-w_\lambda$ where $w_\lambda$ is an $H$-normalized 
eigenvector associated with $\lambda$. As $(z_n)_{n\in\N^*}$ strongly converges to $0$ in $V$, 
the whole sequence $(u_n)_{n\in\N}$ converges, either to $w_\lambda$ or to $-w_\lambda$, and the convergence holds strongly in $V$.  
\end{proof}

\subsection{Proof of Theorem~\ref{th:main} for the Residual algorithm}\label{sec:proofResidual}

Throughout this section, we use the notation of Section~\ref{sec:residual}. From Lemma~\ref{lem:Redef}, we know that all the iterations of the PReGA are well-defined. 
The following lemma is the analog of Lemma~\ref{lem:ELrayleigh}.

\begin{lemma}\label{lem:ELresidual}
For all $n\geq1$, it holds
\begin{equation}\label{eq:Euler2}
\langle u_{n-1} +z_n, z_n \rangle_a - (\lambda_{n-1}+\nu)\langle u_{n-1}, z_n\rangle = 0. 
\end{equation}
\end{lemma}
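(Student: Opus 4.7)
The plan is to exploit the cone property of the dictionary $\Sigma$ (assumption (H$\Sigma 1$)) to reduce the optimality of $z_n$ in the infinite-dimensional problem (\ref{eq:algo3}) to the vanishing of a one-variable derivative at $t=1$. Since $z_n$ minimizes $\cE_n$ over $\Sigma$, and for every $t\in\R$ the element $tz_n$ also lies in $\Sigma$ by (H$\Sigma 1$), the scalar function
$$
g(t) := \cE_n(t z_n) = \frac{1}{2}\|u_{n-1}+tz_n\|_a^2 - (\lambda_{n-1}+\nu)\,t\,\langle u_{n-1}, z_n\rangle
$$
attains its minimum over $\R$ at $t=1$.

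Next, I would observe that $g$ is a smooth (in fact quadratic) function of the real variable $t$, whose derivative is
$$
g'(t) = \langle u_{n-1}+tz_n,\, z_n\rangle_a - (\lambda_{n-1}+\nu)\,\langle u_{n-1}, z_n\rangle.
$$
The first-order optimality condition $g'(1)=0$ then yields exactly the identity (\ref{eq:Euler2}).

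There is no real obstacle here: the only ingredient beyond elementary calculus is the fact that the variation $t\mapsto tz_n$ remains admissible, which is guaranteed because $\Sigma$ is a cone (i.e.\ $tz \in \Sigma$ for every $(z,t)\in\Sigma\times\R$). Lemma~\ref{lem:Redef} ensures that $z_n$ exists, so the argument is well-posed for every $n\geq 1$.
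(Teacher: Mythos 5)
Your proof is correct and follows exactly the route the paper intends: the paper states this lemma as ``the analog of Lemma~\ref{lem:ELrayleigh}'', whose proof is precisely the restriction of the objective to the line $t\mapsto tz_n$ (admissible by the cone property (H$\Sigma 1$)) followed by the first-order condition at $t=1$. The residual case is in fact simpler, since $\cE_n$ is quadratic and defined on all of $V$, so no nondegeneracy caveat is needed.
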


As above, we set $\alpha_n:= \frac{1}{\|u_{n-1} + z_n\|}$ and $\zt_n := \alpha_n z_n$ so that 
$u_n = \alpha_n u_{n-1} + \zt_n$.

\begin{lemma}\label{lem:lemma3}
 The sequence $(\lambda_n)_{n\in\N}$ is non-increasing and the series $\sum_{n=1}^{+\infty} \|\zt_n\|^2 $ and $\sum_{n=1}^{+\infty} \|\zt_n\|_a^2$ are convergent.
\end{lemma}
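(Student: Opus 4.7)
The plan is to derive a single algebraic identity that simultaneously yields both monotonicity of $(\lambda_n)_{n\in\N}$ and summability of the two series. Specifically, the target identity is
$$
(\lambda_{n-1} - \lambda_n)\|u_{n-1}+z_n\|^2 = \|z_n\|_a^2 + (\lambda_{n-1} + \nu)\|z_n\|^2,
$$
or, equivalently after dividing by $\|u_{n-1}+z_n\|^2$ and recalling that $\widetilde{z}_n = \alpha_n z_n$,
$$
\lambda_{n-1} - \lambda_n = \|\widetilde{z}_n\|_a^2 + (\lambda_{n-1} + \nu)\|\widetilde{z}_n\|^2.
$$

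To establish it, I would start from $\lambda_n\|u_{n-1}+z_n\|^2 = a(u_{n-1}+z_n, u_{n-1}+z_n)$ (legitimate once $u_{n-1}+z_n\neq 0$), subtract $\lambda_{n-1}\|u_{n-1}+z_n\|^2$, and use $\|u_{n-1}\|=1$ and $a(u_{n-1},u_{n-1})=\lambda_{n-1}$. The computation reduces to a cross term $2\bigl[\lambda_{n-1}\langle u_{n-1}, z_n\rangle - a(u_{n-1}, z_n)\bigr]$ and a diagonal piece $\lambda_{n-1}\|z_n\|^2 - a(z_n, z_n)$. The Euler equation (\ref{eq:Euler2}), once $\langle \cdot,\cdot\rangle_a$ is unfolded via (\ref{eq:ascal}), rewrites as
$$
\lambda_{n-1}\langle u_{n-1}, z_n\rangle - a(u_{n-1}, z_n) = a(z_n,z_n) + \nu\|z_n\|^2 = \|z_n\|_a^2,
$$
and plugging this into the cross term, together with $a(z_n,z_n) = \|z_n\|_a^2 - \nu\|z_n\|^2$, delivers the identity after collecting like terms.

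Once the identity is in hand, the conclusion is essentially automatic. Since $\|u_n\|=1$ for all $n$, the Rayleigh quotient characterization gives $\lambda_{n-1}\geq \mu_1$, and (HA) together with the bound $\mu_p+\nu>0$ recalled after (\ref{eq:eigenvalue}) ensures $\lambda_{n-1}+\nu\geq \mu_1+\nu>0$. The right-hand side is therefore non-negative, so $(\lambda_n)_{n\in\N}$ is non-increasing; being bounded below by $\mu_1$, it converges, and the telescoping sum $\sum_{n\geq 1}(\lambda_{n-1}-\lambda_n)<+\infty$ transfers to convergence of $\sum_{n\geq 1}\|\widetilde{z}_n\|_a^2$ and $\sum_{n\geq 1}(\lambda_{n-1}+\nu)\|\widetilde{z}_n\|^2$. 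The uniform lower bound $\lambda_{n-1}+\nu\geq \mu_1+\nu>0$ then yields summability of $\sum_{n\geq 1}\|\widetilde{z}_n\|^2$ as well.

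The only genuinely non-routine point is verifying that $u_{n-1}+z_n\neq 0$, so that $u_n$ (and hence $\alpha_n$) is well-defined and the normalization step is legitimate. This is handled a posteriori by the unnormalized form of the identity: if one had $u_{n-1}+z_n=0$, then its left-hand side would vanish, forcing $\|z_n\|_a^2 + (\lambda_{n-1}+\nu)\|z_n\|^2=0$; since $\|\cdot\|_a$ is a norm and $\lambda_{n-1}+\nu\geq 0$, this yields $z_n=0$ and hence $u_{n-1}=0$, contradicting $\|u_{n-1}\|=1$. Beyond this small bookkeeping, the proof is just careful tracking of the two scalar products $a(\cdot,\cdot)$ and $\langle\cdot,\cdot\rangle_a$ when invoking the Euler equation.
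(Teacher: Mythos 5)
Your argument is correct and is essentially the paper's own proof: you derive the same identity $\lambda_{n-1}-\lambda_n=\|\zt_n\|_a^2+(\lambda_{n-1}+\nu)\|\zt_n\|^2$ from the Euler equation (\ref{eq:Euler2}), read off monotonicity from the sign of the right-hand side (using $\lambda_{n-1}+\nu\geq\mu_1+\nu>0$), and obtain both summabilities by telescoping. The a posteriori check that $u_{n-1}+z_n\neq 0$ is a small extra care the paper leaves implicit, and it is valid provided one reads the "left-hand side" as $\lambda_{n-1}\|u_{n-1}+z_n\|^2-a(u_{n-1}+z_n,u_{n-1}+z_n)$, which does not involve $\lambda_n$.
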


\begin{proof}
Let us first prove that for all $n\in \N^*$,
$$
\lambda_n - \lambda_{n-1} =  \frac{a(u_{n-1}+z_n, u_{n-1} + z_n )}{\|u_{n-1} + z_n\|^2} - \lambda_{n-1} \leq 0.
$$
Since $\lambda_{n-1} = a(u_{n-1}, u_{n-1})$, using equation (\ref{eq:Euler2}), we obtain
$$
a(u_{n-1}+z_n, u_{n-1} + z_n ) - \lambda_{n-1}\|u_{n-1} + z_n\|^2 = - \|z_n\|_a^2 - (\lambda_{n-1} + \nu)\|z_n\|^2 \leq 0.
$$
Thus, the sequence $(\lambda_n)_{n\in\N}$ is non-increasing. Besides, since
\begin{equation}\label{eq:estnew}
\lambda_n - \lambda_{n-1} \leq - \|\zt_n\|_a^2 - (\lambda_{n-1} + \nu)\|\zt_n\|^2,
\end{equation}
this also yields that the series $\sum_{n=1}^{+\infty} \|\zt_n\|^2$ and $\sum_{n=1}^{+\infty} \|\zt_n\|_a^2$ are convergent, since 
$\dps \nu + \lambda = \nu + \mathop{\lim}_{n\to +\infty} \lambda_n  >0$.
\end{proof}

\begin{proof} [Proof of Theorem~\ref{th:main}]
 As for the PRaGA, Lemma~\ref{lem:lemma3} implies that $\dps \alpha_n \mathop{\longrightarrow}_{n\to\infty} 1$, $(z_n)_{n\in\N^*}$ converges 
to $0$ strongly in $V$ and $\dps \langle u_{n-1} , u_n \rangle \mathop{\longrightarrow}_{n\to\infty} 1$. Besides, the sequence $(u_n)_{n\in\N}$ is bounded in $V$,
 so, up to the extraction of a subsequence $(u_{n_k})_{k\in\N}$, there exists $w\in V$ such that $(u_{n_k})_{k\in\N}$ weakly converges towards $w$ in $V$. 

\medskip

From (\ref{eq:algo3}), we know that for all $n\in\N^*$ and all $z\in \Sigma$, 
$$
\frac{1}{2}\|u_{n-1} + z\|_a^2 - (\lambda_{n-1} + \nu) \langle u_{n-1}, z\rangle\geq  \frac{1}{2}\|u_{n-1} + z_n\|_a^2 - (\lambda_{n-1} + \nu) \langle u_{n-1}, z_n\rangle ,
$$
which leads to
\begin{equation}\label{eq:eqRenew}
 \langle u_{n-1}, z\rangle_a + \frac{1}{2}\|z\|_a^2 - (\lambda_{n-1} + \nu) \langle u_{n-1},z\rangle \geq \langle u_{n-1}, z_n\rangle_a + \frac{1}{2}\|z_n\|_a^2 - (\lambda_{n-1} + \nu) \langle u_{n-1}, z_n\rangle.
\end{equation}
Taking $n = n_k+1$ in (\ref{eq:eqRenew}) and letting $k$ go to infinity, we obtain
$$
\langle w, z\rangle_a  + \frac{1}{2}\|z\|_a^2 - (\lambda + \nu) \langle w,z\rangle \geq 0.
$$
which implies, by taking $\varepsilon z$ instead of $z$ in the equation above and letting $\varepsilon$ go to zero (which we can do because of (H$\Sigma$1)), 
$$
\langle w, z\rangle_a - (\lambda + \nu) \langle w, z\rangle = 0. 
$$
We infer from assumption (HV) and (H$\Sigma 3$) that $w$ is an $H$-normalized eigenvector of $a$ associated to the eigenvalue $\lambda$. 

\medskip

The rest of the proof uses exactly the same arguments as those used in the previous section. 
\end{proof}

\subsection{Proof of Proposition~\ref{prop:noHV} for the PReGA}\label{sec:proofnoHV}

In this section, we do not assume any more that the embedding $V \hookrightarrow H$ is compact, 
but we make the additional assumption that $\lambda_0 < \min \sigma_{\rm ess}(A)$.  From Lemma~\ref{lem:Redef}, we know that all the iterations of the PReGA are well-defined. 

\begin{proof}[Proof of Proposition~\ref{prop:noHV}]
Reasoning as in Section~\ref{sec:proofResidual}, we can prove that the sequence $(\lambda_n)_{n\in\N}$ is non-increasing and thus converges towards a limit $\lambda$. Besides, the series 
$\sum_{n=1}^{+\infty} \|\zt_n\|^2 $ and $\sum_{n=1}^{+\infty} \|\zt_n\|_a^2$ are convergent. We also know that the sequence $(u_n)_{n\in\N}$ is bounded in $V$. We can therefore extract
 from $(u_n)_{n\in\N}$ a subsequence $(u_{n_k})_{k\in\N}$ which weakly converges in $V$ towards some $w\in V$ satisfying 
$\dps \|w\| \leq \mathop{\mbox{liminf}}_{k\to +\infty} \|u_{n_k}\| = 1$.
 Besides, still reasoning as in Section~\ref{sec:proofResidual}, we obtain
\begin{equation}\label{eq:eqw}
\langle w, z\rangle_a - (\lambda + \nu) \langle w, z\rangle = 0, 
\end{equation}
for all $z\in \Sigma$. 

Let us first prove that in fact $\|w\| = 1$. Without loss of generality, up to adding a constant to the operator $A$, we can assume that 
$\min \sigma_{\rm ess}(A) = 0$, which implies that $\lambda <0$. Let us introduce the $H$-orthogonal spectral projector $P:= \chi_{(-\infty, \lambda/2]}(A)$, where $\chi_{(-\infty, \lambda/2]}$ 
is the characteristic function of the interval $(-\infty, \lambda/2]$. The projector $P$ is finite-rank and its range is equal to the 
subspace of $H$ spanned by the eigenvectors associated to the discrete eigenvalues of $A$ lower or equal to $\lambda/2$. In particular, (\ref{eq:eqw}) implies that $w\in \mbox{Ran}(P)$. 
For all $k\in \N^*$, $u_{n_k}$ can be decomposed as $u_{n_k} = Pu_{n_k} + (1-P)u_{n_k}$. Since $(u_{n_k})_{k\in\N}$ weakly converges in $V$ towards $w$ and $P$ is finite-rank,
$\left(P u_{n_k}\right)_{k\in\N}$ strongly converges in $V$ to $w$ and $\left( (1-P)u_{n_k}\right)_{k\in\N}$ 
weakly converges in $V$ to $0$. In particular, we have $\dps \mathop{\lim}_{k\to +\infty} \|(1-P)u_{n_k}\|^2 = \mathop{\lim}_{k\to +\infty} \|u_{n_k}\|^2 - \|Pu_{n_k}\|^2  = 1 -\|w\|^2$, and 
since for all $k\in \N$, $(1-P)u_{n_k} \in \mbox{\rm Ker} (P)$, 
\begin{equation}\label{eq:lowerbound}
\mathop{\mbox{liminf}}_{k\to + \infty} a\left( (1-P)u_{n_k}, (1-P)u_{n_k} \right) \geq \frac{\lambda}{2} \left( 1 - \|w\|^2 \right). 
\end{equation}
Besides, for all $k\in\N$, it holds that
$$
a\left( u_{n_k}, u_{n_k}\right) = a\left( P u_{n_k}, Pu_{n_k}\right) + a\left( (1-P) u_{n_k}, (1-P) u_{n_k} \right),
$$
with
$$
\mathop{\lim}_{k\to +\infty} a\left( u_{n_k}, u_{n_k}\right) = \lambda \quad \mbox{and} \quad \mathop{\lim}_{k\to +\infty} a\left( Pu_{n_k}, Pu_{n_k}\right) = a(w,w) = \lambda \|w\|^2.
$$
This yields that 
\begin{equation}\label{eq:greaterbound}
\mathop{\lim}_{k\to +\infty} a\left( (1-P)u_{n_k}, (1-P)u_{n_k}\right) = \lambda (1-\|w\|^2).
\end{equation}
Since $0 > \frac{\lambda}{2} > \lambda$, (\ref{eq:lowerbound}) and (\ref{eq:greaterbound}) necessarily imply that $\|w\| = 1$. 

\medskip

Consequently, $\|w\|^2  = 1  = \mathop{\lim}_{k\to + \infty} \|u_{n_k}\|^2$ and $a(w,w) = \lambda = \mathop{\lim}_{k\to + \infty} a(u_{n_k}, u_{n_k})$. Thus, the convergence of the sequence 
$\left( u_{n_k} \right)_{k\in\N}$ towards $w$ also holds strongly in $V$. The rest of the proof then uses exactly the same arguments as those used in the previous section.   
\end{proof}

\subsection{Proof of Theorem~\ref{th:main} and of Proposition~\ref{prop:noHV} for the orthogonal greedy algorithms}\label{sec:prooforth}

It is clear that there always exists at least one solution to the minimization problems (\ref{eq:orthopt}). 

The arguments of the proof are similar for both algorithms. For all $n\in\N^*$, let us denote by $\alpha_n:= \frac{1}{\|u_{n-1} + z_n\|}$, $\zt_n = \alpha_n z_n$, 
$\ut_n:= \alpha_n u_{n-1} + \zt_n$ and $\widetilde{\lambda}_n =a(\ut_n, \ut_n)$.

For all $n\in \N^*$, $\lambda_n = a(u_n, u_n) \leq \widetilde{\lambda}_n = a(\ut_n, \ut_n)$. 
Besides, the same calculations as the ones presented in Section~\ref{sec:proofRayleigh} and Section~\ref{sec:proofResidual} can be carried out, replacing $u_n$ by $\ut_n$. This implies that 
for all $n\in\N^*$, $\widetilde{\lambda}_n \leq \lambda_{n-1}$ (and thus the sequence $(\lambda_n)_{n\in\N}$ is non-increasing). Besides,
 the series of general term $\left( \|\zt_n\|_a^2 \right)_{n\in\N}$ is convergent. 
 
Equations (\ref{eq:eqRanew}) and (\ref{eq:eqRenew}) are still valid for the orthogonalized versions of the algorithms. 
Thus, following exactly the same lines as in Section~\ref{sec:proofRayleigh} and Section~\ref{sec:proofResidual}, we obtain the desired results.
 The fact that for all $n\in \N^*$, $\langle u_n, u_{n-1} \rangle \geq 0$ ensures the uniqueness of the limit of the sequence in the
 case when the eigenvalue $\lambda$ is simple.

Of course, the same kind of arguments as in Section~\ref{sec:proofnoHV} leads to the same conclusion for the OReGA in the case when the embedding $V\hookrightarrow H$ is not assumed to be compact, 
hence Proposition~\ref{prop:noHV}.

\subsection{Proof of Theorem~\ref{prop:finitedim}}\label{sec:prooffinited}

\begin{lemma}
Consider the PRaGA and PReGA in finite dimension. Then, there exists $C \in \R_+$ such that for all $n\in\N$, 
\begin{equation}\label{eq:estgard}
\|\cJ '(u_n)\|_* \leq C \|z_{n+1}\|_a.
\end{equation}
\end{lemma}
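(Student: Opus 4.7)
The plan is to treat PReGA and PRaGA separately: the PReGA estimate follows directly from Lemma~\ref{lem:Figueroa} since its subproblem is quadratic, whereas PRaGA requires combining an energy identity with a two-dimensional Rayleigh quotient analysis.

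For PReGA, I would rewrite $\cE_n(z) := \frac{1}{2}\|u_n+z\|_a^2 - (\lambda_n+\nu)\langle u_n, z\rangle$ (the functional minimized in (\ref{eq:algo3})) by expanding $\frac{1}{2}\|u_n+z\|_a^2$ to get $\cE_n(z) = \mbox{const} + \frac{1}{2}\|z\|_a^2 - \langle l_n, z\rangle_{V',V}$ with $\langle l_n, z\rangle_{V',V} = (\lambda_n+\nu)\langle u_n, z\rangle - \langle u_n, z\rangle_a = \lambda_n \langle u_n, z\rangle - a(u_n, z) = -\tfrac{1}{2}\cJ'(u_n)\cdot z$, the last equality using $\cJ'(u_n)\cdot z = 2(a(u_n, z) - \lambda_n\langle u_n, z\rangle)$, valid since $\|u_n\|=1$. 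Then $z_{n+1}$ is precisely the first iterate of a PGA for $\cE_n$ starting from $0$, so Lemma~\ref{lem:Figueroa} yields $\|z_{n+1}\|_a = \sup_{z\in\Sigma^*}\frac{\langle l_n, z\rangle_{V',V}}{\|z\|_a} = \tfrac{1}{2}\|\cJ'(u_n)\|_*$, where the last equality uses (H$\Sigma 1$) (so $\Sigma^*$ is symmetric under $z\mapsto -z$, making the sign of $\langle l_n, z\rangle$ immaterial in the supremum). This proves the PReGA estimate with $C=2$.

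For PRaGA the argument is in two steps. \textbf{Step 1.} I would first establish the clean identity
$$
\lambda_n - \lambda_{n+1} = a(z_{n+1}, z_{n+1}) - \lambda_{n+1}\|z_{n+1}\|^2
$$
by substituting $u_n = \|u_n+z_{n+1}\|\,u_{n+1} - z_{n+1}$ into $\lambda_n = a(u_n, u_n)$ and simplifying using $\|u_{n+1}\|=1$ and the Euler relation $a(u_{n+1}, z_{n+1}) = \lambda_{n+1}\langle u_{n+1}, z_{n+1}\rangle$ from Lemma~\ref{lem:ELrayleigh}. Since $(\lambda_n)$ is non-increasing and bounded below by $\mu_1$, and $\|\cdot\|\leq\|\cdot\|_a$, this gives $\lambda_n - \lambda_{n+1} \leq C_1 \|z_{n+1}\|_a^2$ for a constant $C_1$ independent of $n$. \textbf{Step 2.} I would then bound $\|\cJ'(u_n)\|_*$ in terms of $\sqrt{\lambda_n - \lambda_{n+1}}$. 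For any $z\in\Sigma^*$, (H$\Sigma 1$) yields $tz\in\Sigma$ for every $t\in\R$, so the minimality of $z_{n+1}$ gives $\lambda_{n+1}\leq\min_{t\in\R}\cJ(u_n+tz)$. Since $u_n\notin\Sigma$ (because $\lambda_n<\lambda_\Sigma$, cf.\ Lemma~\ref{lem:Radef}), no $z\in\Sigma^*$ is collinear with $u_n$; introducing $Pz := z - \langle u_n, z\rangle u_n$ and the $H$-unit vector $w := Pz/\|Pz\|$ gives an $H$-orthonormal basis $(u_n, w)$ of $\mbox{span}(u_n, z)$, and a direct trigonometric computation yields
$$
\min_{\theta}\cJ(\cos\theta\,u_n + \sin\theta\,w) = \tfrac{\lambda_n + a(w,w)}{2} - \tfrac{1}{2}\sqrt{(\lambda_n - a(w,w))^2 + 4\,a(u_n, w)^2}.
$$
Setting $\eta := \tfrac{1}{2}\cJ'(u_n)\cdot z = a(u_n, z) - \lambda_n\langle u_n, z\rangle$ (so that $a(u_n, w)\|Pz\| = \eta$) and $\gamma := a(Pz, Pz) - \lambda_n\|Pz\|^2$, a rationalization of $-\alpha + \sqrt{\alpha^2+4\beta^2}$ produces
$$
\lambda_n - \lambda_{n+1} \geq \frac{2\eta^2}{\gamma + \sqrt{\gamma^2 + 4\eta^2\|Pz\|^2}}.
$$
The uniform bounds $|\eta|\leq C\|z\|_a$ and $|\gamma|\leq C\|z\|_a^2$, both consequences of the uniform boundedness of $\|u_n\|_a^2 = \lambda_n+\nu$, force the denominator to be at most $C_2\|z\|_a^2$, giving $(\cJ'(u_n)\cdot z)^2 \leq 4C_2\|z\|_a^2(\lambda_n - \lambda_{n+1})$. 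Dividing by $\|z\|_a^2$, taking the supremum over $\Sigma^*$ and combining with Step 1 yields $\|\cJ'(u_n)\|_*^2 \leq 4 C_1 C_2\|z_{n+1}\|_a^2$.

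The main technical obstacle is the $2\times 2$ Rayleigh quotient estimate for PRaGA: one must keep constants $C_1, C_2$ uniform in $n$, which crucially uses the finite-dimensional setting and the uniform boundedness of the monotone sequence $(\lambda_n)$ to control $\|u_n\|_a$ and therefore the estimates on $\eta$ and $\gamma$. Taking the maximum of the two constants obtained (one for each algorithm) produces a single $C\in\R_+$ valid for both PRaGA and PReGA, as claimed.
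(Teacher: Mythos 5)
Your proof is correct. The PReGA half is essentially the paper's own argument: both of you rewrite the subproblem (\ref{eq:algo3}) as a quadratic functional $\frac12\|z\|_a^2-\langle l_n,z\rangle$ and invoke Lemma~\ref{lem:Figueroa} to identify $\|z_{n+1}\|_a$ with the dual norm of the residual, which is (up to the factor $2$ coming from your normalization of $\cJ'$) exactly $\|\cJ'(u_n)\|_*$. For the PRaGA, however, you take a genuinely different route. The paper argues softly: since $\cJ$ is $\cC^2$ on the compact set $\overline{\Omega}=\{1/2\le\|v\|\le 3/2\}$, its Hessian is bounded there, so a second-order Taylor expansion of the minimality inequality $\cJ(u_n+z_{n+1})\le\cJ(u_n+\varepsilon z)$ around $u_n+z_{n+1}$, combined with the Euler relation $\langle\cJ'(u_n+z_{n+1}),z_{n+1}\rangle_{V',V}=0$ of Lemma~\ref{lem:ELrayleigh} and the choice $\varepsilon=\|z_{n+1}\|_a/\|z\|_a$, gives $|\langle\cJ'(u_n+z_{n+1}),z\rangle_{V',V}|\le 4C\|z\|_a\|z_{n+1}\|_a$; a second application of the bounded Hessian then replaces $\cJ'(u_n+z_{n+1})$ by $\cJ'(u_n)$. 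You instead prove the exact decay identity $\lambda_n-\lambda_{n+1}=a(z_{n+1},z_{n+1})-\lambda_{n+1}\|z_{n+1}\|^2\le C_1\|z_{n+1}\|_a^2$ and then lower-bound $\lambda_n-\lambda_{n+1}$ by an explicit $2\times 2$ Rayleigh-quotient computation on ${\rm span}(u_n,z)$, obtaining $(\cJ'(u_n)\cdot z)^2\le C\|z\|_a^2(\lambda_n-\lambda_{n+1})$; both identities check out (the only point worth spelling out is that $\inf_{t\in\R}\cJ(u_n+tz)$ equals the smallest eigenvalue of $a$ restricted to ${\rm span}(u_n,z)$, which holds because every direction $\cos\theta\,u_n+\sin\theta\,w$ is either a multiple of some $u_n+tz$ or a multiple of $z\in\Sigma$, and $\cJ(z)\ge\lambda_\Sigma>\lambda_{n+1}$). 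The paper's argument is shorter but leans on the finite-dimensional compactness of $\overline{\Omega}$ to bound the Hessian; yours is more computational but uses only the continuity of $a$ and the uniform bounds $\mu_1\le\lambda_n\le\lambda_0$, so it would carry over unchanged to the infinite-dimensional setting, and it delivers the sharper intermediate estimate $\|\cJ'(u_n)\|_*^2\lesssim\lambda_n-\lambda_{n+1}$, which is precisely the quantity the subsequent \L ojasiewicz argument consumes.
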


Let us recall that the norm $\|\cdot\|_*$ is the injective norm on $V'$ i.e.
$$
\forall l\in V', \; \|l\|_* = \mathop{\sup}_{z\in \Sigma^*} \frac{\langle l, z\rangle_{V',V}}{\|z\|_a},
$$
and that for all $v\in \Omega = \{ u\in V, \; 1/2 < \|u\| < 3/2 \}$, the derivative of $\cJ$ at $v$ is given by
$$
\forall w\in V, \quad \langle \cJ'(v), w \rangle_{V',V}  = \frac{1}{\|v\|^2}\left( a(v,w) - a(v,v) \langle v, w \rangle \right).
$$

\begin{proof}
For the PReGA algorithm, (\ref{eq:estgard}) is straightforward since, using (\ref{eq:algo3}) and Lemma~\ref{lem:Figueroa}, it holds that
$$
\|\cJ '(u_n)\|_* =  \mathop{\sup}_{z\in \Sigma^*} \frac{a(u_n, z) - \lambda_n \langle u_n, z \rangle}{\|z\|_a} =  \mathop{\sup}_{z\in \Sigma^*} \frac{\langle u_n, z\rangle_a - (\nu + \lambda_n) \langle u_n, z \rangle}{\|z\|_a} =  \|z_{n+1}\|_a.
$$
Let us now prove (\ref{eq:estgard}) for the PRaGA. 
Since $\cJ$ is $\cC^2$ on the compact bounded set $\overline{\Omega}$, the Hessian of $\cJ$ at any $v\in \Omega$ is bounded. Thus, since $\|u_n\| = 1$ for all $n\in \N$ and
$\dps z_n \mathop{\longrightarrow}_{n\to +\infty} 0$ strongly in $H$, there exists $C>0$, $n_0\in \N$ and $\varepsilon_0 >0$ such that 
for all $n\geq n_0$, all $\varepsilon\leq \varepsilon_0$ and all $z\in \Sigma$ 
such that $\|z\|_a \leq 1$, 
$$
\cJ(u_{n} + z_{n+1}) \leq \cJ(u_n + \varepsilon z) \leq \cJ(u_n+z_{n+1}) + \langle \cJ '(u_n+z_{n+1}), \varepsilon z - z_{n+1} \rangle_{V',V} + C \|\varepsilon z - z_{n+1}\|_a^2.
$$  
Since $\langle \cJ '(u_n + z_{n+1}), z_{n+1} \rangle_{V',V} = 0$ from Lemma~\ref{lem:ELrayleigh}, 
the above inequality implies that
$$
\varepsilon \left|\langle \cJ '(u_n+z_{n+1}), z \rangle_{V',V}\right| \leq C \| \varepsilon z - z_{n+1}\|_a^2 \leq 2C \left(\varepsilon^2 \|z\|_a^2 + \|z_{n+1}\|_a^2\right). 
$$
Taking $\varepsilon = \frac{\|z_{n+1}\|_a}{\|z\|_a}$ in the above expression yields 
$$
\forall z\in \Sigma, \quad \left|\langle \cJ '(u_n+z_{n+1}), z \rangle_{V',V}\right| \leq 4C \|z\|_a\|z_{n+1}\|_a.
$$
Using again the fact that the Hessian of $\cJ$ is bounded in $\Omega$, and that $\dps \mathop{\lim}_{n\to\infty} \|z_{n+1}\|_a=0$, there exists $n_0\in \N$ such that for all $n\geq n_0$, 
$$
\forall z\in \Sigma, \quad \left|\langle \cJ '(u_n+z_{n+1}), z \rangle_{V',V} -\langle \cJ '(u_n), z \rangle_{V',V}  \right| \leq C \|z\|_a \|z_{n+1}\|_a,
$$
and finally
$$
\forall z\in \Sigma, \quad \left|\langle \cJ '(u_n), z \rangle_{V',V}\right| \leq 5C \|z\|_a\|z_{n+1}\|_a,
$$
which yields the announced result. 
\end{proof}

\begin{proof}[Proof of Theorem~\ref{prop:finitedim}]
Since $\dps d_a(u_n, F_\lambda) \mathop{\longrightarrow}_{n\to\infty} 0$, using (\ref{eq:Lojanew}), there exists $n_0\in \N$ such that for $n\geq n_0$,
$$
\left|\cJ(u_n) - \lambda\right|^{1-\theta} =  \left(\lambda_n - \lambda\right)^{1-\theta} \leq K \|\cJ '(u_n)\|_*.
$$
Thus, using the concavity of the function $\R_+ \ni t \mapsto t^{\theta}$, we have
$$
 (\lambda_n -\lambda)^\theta - (\lambda_{n+1} - \lambda)^{\theta}  \geq  \frac{\theta}{(\lambda_n - \lambda)^{1-\theta}}\left( \lambda_n - \lambda_{n+1}\right) \geq  \frac{\theta}{K \| \cJ'(u_n)\|_* } \left( \lambda_n - \lambda_{n+1}\right).
$$
Equation (\ref{eq:estnew}) implies that $ \lambda_n - \lambda_{n+1} \geq \|\zt_{n+1}\|_a^2$. Besides, since $\|u_n\|^2 = 1$, it holds that for all 
$v\in V$, 
$$
\langle \cJ '(u_n), v \rangle_{V',V} = a(u_n, v) - \lambda_n \langle u_n, v \rangle. 
$$ 

Consequently, for $n$ large enough, using (\ref{eq:estgard}) and the fact that $\dps \alpha_n \mathop{\longrightarrow}_{n\to \infty} 1$, 
\begin{align*}
 (\lambda_n -\lambda)^\theta - (\lambda_{n+1} - \lambda)^{\theta}  &\geq  \frac{\theta}{K \| \cJ'(u_n)\|_* } \left( \lambda_n - \lambda_{n+1}\right) \geq  \frac{\theta}{K C \|z_{n+1}\|_a } \|\zt_{n+1}\|_a^2 \\
 &\geq  \frac{\theta \alpha_{n+1}}{KC } \|\zt_{n+1}\|_a \geq  \frac{\theta}{2KC} \|\zt_{n+1}\|_a.\\
\end{align*}
Since $\dps \mathop{\lim}_{n\to\infty} \alpha_n = 1$ and the series of general term 
$\left( (\lambda_n -\lambda)^\theta - (\lambda_{n+1} - \lambda)^{\theta}\right)_{n\in\N}$ is convergent, 
the series of general terms $(\|\zt_n\|_a)_{n\in\N^*}$ 
and $(\|z_n\|_a)_{n\in\N^*}$ are convergent. 
Besides, since $\alpha_n = \frac{1}{\|u_{n-1} + z_n\|}$, it can be easily seen that $|1-\alpha_n| = \mathcal{O}(\|z_n\|)$ is also the general 
term of a convergent series. Thus, since 
$\|u_n - u_{n-1}\|_a \leq |1-\alpha_n| (\lambda_\Sigma + \nu) + \|\zt_n\|_a$, the sequence $(u_n)_{n\in\N}$ strongly converges in $V$ to some $w\in F_\lambda$. 
This also implies that there exists $c>0$ and $n_0\in \N^*$  such that for all $n\geq n_0$, 
$$
\|u_n - u_{n-1}\|_a \leq c \|\zt_{n}\|_a.
$$
Besides, denoting by $e_n:= \sum_{k=n}^{+\infty} \|\zt_k\|_a$, we have 
\begin{equation}\label{eq:en}
\|u_n -w\|_a \leq \sum_{k=n}^{+\infty} \|u_{k+1} -u_k\|_a \leq c e_n.
\end{equation}

\medskip

Let us now prove the rates (\ref{eq:rate1}) and (\ref{eq:rate2}). The strategy of proof is identical to the one used in \cite{Levitt}. 

\medskip

The above calculations imply that for $k$ large enough, 
\begin{equation}\label{eq:kla}
|\lambda_k -\lambda|^\theta - |\lambda_{k+1} - \lambda|^{\theta} \geq  \frac{\theta}{ACK} \|\zt_{k+1}\|_a,
\end{equation}
for any constant $A >2$. We choose $A$ large enough to ensure that $M = \frac{1}{CK}\left( \frac{\theta}{ACK} \right)^{\frac{1-\theta}{\theta}} <1$. Let us first prove that for all $n\in\N^*$,
\begin{equation}\label{eq:est0}
e_{n+1}  \leq e_n - M e_n^{\frac{1-\theta}{\theta}}
\end{equation}

By summing inequalities (\ref{eq:kla}) for $k$ ranging from $n-1$ to infinity, we obtain
$$
 \frac{\theta}{ACK} e_n \leq |\lambda_{n-1} -\lambda|^\theta,
$$
which yields 
$$
 \left( \frac{\theta}{ACK} e_n \right)^{\frac{1-\theta}{\theta}}  \leq  |\lambda_{n-1} -\lambda|^{1-\theta}
 \leq  K \|\cJ '(u_{n-1})\|_*
  \leq   CK\|\zt_n\|_a
 =  CK (e_n - e_{n+1}).
$$
Hence, (\ref{eq:est0}).

\medskip

If $\theta = \frac{1}{2}$, (\ref{eq:est0}) reduces to
$$
e_{n+1} \leq (1-M)e_n.
$$
Thus, there exists $c_0>0$ such that for all $n\in \N^*$, $e_n \leq c_0 (1-M)^n$. Since we have chosen $A$ large enough so that $0 < 1-M <1$, (\ref{eq:en}) immediately yields (\ref{eq:rate1}). 

\medskip

If $\theta \in (0, 1/2)$, we set $t := \frac{\theta}{1 - 2\theta}$ and, for $n$ large enough, $y_n = Bn^{-t}$ for some constant $B>0$ which will be chosen later. Then, 
$$
 y_{n+1}  =  B (n+1)^{-t} =  Bn^{-t}\left( 1 + \frac{1}{n}\right)^{-t} \geq  B n^{-t} \left( 1 - \frac{t}{n}\right)
 =  y_n \left( 1 -t B^{-1/t}y_n^{1/t}\right).
$$
Then, we choose $B$ large enough so that $B> \left( \frac{M}{t}\right)^{-t}$ with $M = \frac{1}{CK}\left( \frac{\theta}{2CK} \right)^{\frac{1-\theta}{\theta}}$.
 Using (\ref{eq:est0}), we then prove by induction that $e_n \leq y_n$, which yields (\ref{eq:rate2}).
\end{proof}

\section{Appendix: Some pathological cases}\label{sec:appendix}

\begin{example}\label{ex:ex1} {\bf Problem (\ref{eq:minipb}) may have no solution.}

\medskip

{\rm Let $\cH = L^2_{\rm per}(-\pi,\pi)$, and $\cV = H^1_{\rm per}(-\pi,\pi)$, and let $(e_k)_{k\in\Z}$ be the orthonormal basis of $\cH$ defined as:
$$
\forall k\in \Z, \quad e_k:x\in (-\pi,\pi) \mapsto \frac{1}{\sqrt{2\pi}}e^{ikx}.
$$
Let then $H = \cH \otimes \cH$  and $V = \cV \otimes \cV$ so that 
(HV) is satisfied and $(e_k\otimes e_l)_{(k,l) \in \Z^2 }$ forms an orthonormal basis of $H$. It can be easily checked that the set
$$
\Sigma := \left\{ r \otimes s, \; r, s \in \cV \right\}.
$$
satisfies assumptions (H$\Sigma 1$), (H$\Sigma 2$) and (H$\Sigma 3$). Let $(\mu_{k,l})_{(k,l)\in\Z^2}$ be a set of real numbers such that for all $(k,l)\neq (k',l')\in \Z^2$, 
we have $\mu_{k,l} \neq \mu_{k',l'}$. 
Let $a:H\times H \to \R$ be the unique symmetric bilinear form such that:
$$
\forall v\in V, \quad \forall (k,l)\in \Z^2, \quad a(\psi_{k,l}, v) = \mu_{k,l} \langle \psi_{k,l}, v \rangle,
$$
where
$$
\psi_{0,1} :=  \frac{e_0\otimes e_1 + e_1 \otimes e_0}{\sqrt{2}}, \quad
\psi_{1,0} :=  \frac{e_0\otimes e_1 - e_1 \otimes e_0}{\sqrt{2}}, \quad
\forall (k,l)\in \Z^2\setminus\{(1,0), (0,1)\}, \; \psi_{k,l} := e_k\otimes e_l.
$$
We choose the sequence $(\mu_{k,l})_{(k,l)\in \Z^2}$ such that
$$
0 < \mu_{0,1} < \mu_{0,0} < \mu_{1,0}  < M,
$$
for some constant $M>0$ and for all $(k,l)\in \Z^2\setminus\{(0,1),(0,0),(1,0)\}$, 
$$
M + 0.5(1 + |k|^2)(1+|l|^2)\leq \mu_{k,l} \leq M + (1 + |k|^2)(1+|l|^2).
$$
Thus, the lowest eigenvalue of $a(\cdot, \cdot)$ is $\mu_{0,1}$ and an associated eigenvector is $\psi_{0,1}$. 
The bilinear form $a$ is continuous on $V = \cV\times \cV$ and satisfies (HA). Besides, since for all 
$(k,l)\neq (k',l') \in \Z^2$, we have $\mu_{k,l} \neq \mu_{k',l'}$, it holds
$$
a(\psi_{k,l}, \psi_{k',l'}) = \mu_{k,l} \langle \psi_{k,l}, \psi_{k',l'} \rangle = \mu_{k',l'} \langle \psi_{k,l}, \psi_{k',l'} \rangle = 0.
$$
Let $w=e_0 \otimes e_0$. For all $m\in\N^*$, let $z_m:= - \left(e_0 + \frac{1}{m}e_1\right)\otimes \left(e_0 + \frac{1}{m}e_1\right)$. For all $m\in \N^*$, $z_m \in \Sigma$, 
$w+z_m = - \frac{1}{m}\left( e_1 \otimes e_0 + e_0 \otimes e_1 + \frac{1}{m}e_1 \otimes e_1 \right)$.
$$
\cJ(w + z_m) \mathop{\longrightarrow}_{m\to \infty} \cJ (e_1\otimes e_0 + e_0 \otimes e_1)= \mu_{0,1}.
$$
The sequence $(z_m)_{m\in \N^*}$ is then a minimizing sequence of problem (\ref{eq:minipb}) since $\mu_{0,1} = \mathop{\inf}_{v\in V} \cJ(v)$.  

Thus, if there were a minimizer $z_0 = r_0 \otimes s_0\in \Sigma$, with $r_0, s_0 \in \cV$, to problem (\ref{eq:minipb}), 
necessarily $\alpha w + r_0\otimes s_0 = \pm (e_1 \otimes e_0 + e_0 \otimes e_1)$ for some normalization constant $\alpha >0$,
 which is not possible since $w = e_0\otimes e_0$.}
\end{example}

\medskip

\begin{example}\label{ex:ex2} {\bf The greedy algorithms may converge to ``excited'' states.}

\medskip

{\rm Let us take the same notation as in Example~\ref{ex:ex1} and define this time the bilinear form $a(\cdot, \cdot)$ as the unique symmetric bilinear form such that
$$
\forall v\in V, \quad \forall (k,l)\in \Z^2, \quad a(\psi_{k,l}, v) = \mu_{k,l} \langle \psi_{k,l}, v\rangle,
$$
where
$$
 \psi_{0,2}:= \frac{e_0\otimes e_2 + e_2 \otimes e_0}{\sqrt{2}}, \quad
\psi_{2,0}: = \frac{e_0\otimes e_2 - e_2 \otimes e_0}{\sqrt{2}}, \quad
\forall (k,l)\in\Z^2\setminus\{(0,1), (1,0)\}, \; \psi_{k,l}:= e_k\otimes e_l.
$$
We choose the sequence $(\mu_{k,l})_{(k,l)\in \Z^2}$ such that
$$
0 < \mu_{0,2} < \mu_{1,1} < \mu_{2,0} < M,
$$
for some constant $M>0$ which will be chosen later, and for all $(k,l)\in \Z^2 \setminus\{(0,2),(2,0),(1,1)\}$,
$$
M + 0.5 (1 + |k|^2)(1+|l|^2) \leq \mu_{k,l} \leq M + (1 + |k|^2)(1+|l|^2),
$$
so that for all $(k,l)\neq(k',l')\in \Z^2$, $\mu_{k,l} \neq \mu_{k',l'}$. The smallest eigenvalue of $a(\cdot, \cdot)$ is then $\mu_{0,2}$ and an associated eigenvector is 
$\psi_{0,2}$. It is easy to check that $a(\cdot, \cdot)$ still satisfies (HA). 
Let us prove that
$$
e_1 \otimes e_1 \in \mathop{\mbox{\rm argmin}}_{z\in \Sigma} \cJ(z),
$$
so that $\lambda_\Sigma = \mu_{1,1} > \mu_{0,2} = \mathop{\inf}_{v\in V} \cJ(v)$. 
Let us argue by contradiction and assume that there exists $r,s \in \cV$ such that $\|r\otimes s\| = 1$ and $\cJ(r\otimes s) < \cJ(e_1\otimes e_1)$. Since $(e_k)_{k\in\Z}$ forms an orthonormal basis 
of $\cH$, we can choose $r,s\in \cV$ such that there exists two sequences of real numbers $(c_k^r)_{k\in\Z}$ and $(c_k^s)_{k\in\Z}$ such that
$$
r = \sum_{k\in \Z} c_k^r e_k, \quad s = \sum_{k\in\Z} c_k^s e_k, \quad \sum_{k\in\Z}\left|c_k^r\right|^2 =  \sum_{k\in\Z}\left|c_k^s\right|^2  = 1.
$$
It is easy to check that if $\cJ(r\otimes s) < \cJ(e_1\otimes e_1)$, then, necessarily, 
\begin{equation}\label{eq:contrad}
 \cJ\left( \left( c_0^r e_0 + c_2^r e_2\right)\otimes \left(c_0^s e_0 + c_2^s e_2 \right)\right) < \cJ(e_1\otimes e_1) = \mu_{1,1} > \mu_{0,2}.
\end{equation}
Let us now prove that, up to carefully choosing the values of the eigenvalues $(\mu_{k,l})_{(k,l)\in\Z^2}$, it may happen that 
$\cJ\left( \left( \cos \theta e_0 + \sin \theta e_2\right) \otimes \left( \cos \phi e_0 + \sin \phi e_2\right) \right) \geq \mu_{1,1}$ 
for any $\theta, \phi \in \R$, which will yield a 
contradiction. 
It holds
\begin{align*}
 z_{\theta,\psi}& := \left( \cos \theta e_0 + \sin \theta e_2\right) \otimes \left( \cos \phi e_0 + \sin \phi e_2\right) \\
& = \cos\theta \cos \phi \psi_{0,0} + \sin\theta \sin \phi \psi_{2,2} + \frac{\sqrt{2}}{2} (\cos \theta \sin \phi + \cos \phi \sin \theta) \psi_{0,2} \\
& + \frac{\sqrt{2}}{2} (\cos \theta \sin \phi - \cos \phi \sin \theta) \psi_{2,0},\\
\end{align*}
so that
\begin{align*}
a(z_{\theta,\psi}, z_{\theta, \psi})  =  \cJ(z_{\theta,\phi}) 
&= \cos^2\theta \cos^2 \phi \mu_{0,0} + \sin^2\theta \sin^2 \phi \mu_{2,2} + \frac{1}{2}(\cos \theta \sin \phi + \cos \phi \sin \theta)^2 \mu_{0,2} \\
&+ \frac{1}{2}(\cos \theta \sin \phi - \cos \phi \sin \theta)^2 \mu_{2,0}.\\
\end{align*}
We want to prove that
\begin{equation}\label{eq:interm}
\forall (\theta,\phi)\in \R^2, \quad \cJ(z_{\theta, \phi}) \geq \mu_{1,1} = \mu_{1,1}(\cos^2\theta + \sin^2\theta)(\cos^2\phi + \sin^2 \phi).
\end{equation}
Since $(\theta, \phi) \in \R^2 \mapsto \cJ(z_{\theta, \phi})$ is continuous, it is sufficient to consider
 $\theta, \phi \in \R$ such that $\cos \theta\cos \phi \neq 0$. Denoting by 
$t_1:=\tan \theta$ and $t_2:= \tan \phi$, proving (\ref{eq:interm}) amounts to proving that
$$
\forall t_1,t_2\in\R, \quad A(t_1, t_2):= \mu_{0,0} + t_1^2 t_2^2 \mu_{2,2} + \frac{1}{2}\mu_{0,2} (t_1 + t_2)^2 + \frac{1}{2}\mu_{2,0} (t_1- t_2)^2 - \mu_{1,1} (1+ t_1^2)(1+t_2^2) \geq 0.
$$
The quantity $A(t_1,t_2)$ can be rewritten as
\begin{align*}
 A(t_1,t_2) & =  (\mu_{0,0} - \mu_{1,1}) + (\mu_{2,2} - \mu_{1,1})t_1^2 t_2^2 + \frac{1}{2}\mu_{0,2} (t_1^2 + t_2^2) + \frac{1}{2}(\mu_{2,0} - \mu_{0,2})(t_2 -t_1)^2 - \mu_{1,1} (t_1^2 + t_2^2)\\
& = \frac{1}{2}\mu_{0,2} (t_1^2 + t_2^2) +  \frac{1}{2}(\mu_{2,0} - \mu_{0,2} - 2\mu_{1,1})(t_2 -t_1)^2 - \mu_{1,1} (t_1^2 + t_2^2) -2\mu_{2,0} t_1t_2 \\
&+ (\mu_{0,0} - \mu_{1,1}) + (\mu_{2,2} - \mu_{1,1})t_1^2 t_2^2 .\\
\end{align*}
Thus by choosing $\mu_{2,0} > \mu_{0,2} + 2\mu_{1,1}$, and $\mu_{0,0}$, $\mu_{2,2}$ and $M$ large enough, $A(t_1,t_2)$ is 
ensured to be non-negative for any $t_1,t_2\in\R$. Indeed, by setting $x = t_1t_2$, 
the last three terms in the last equality may be rewritten as a second degree polynomial in $x$ whose discriminant is negative.
This leads to a contradiction with (\ref{eq:contrad}).

\medskip

Thus, $\dps \psi_{1,1} = e_1\otimes e_1 \in \mathop{\mbox{argmin}}_{z\in \Sigma} \cJ(z)$ and since $\psi_{1,1}$ is an eigenvector of $a(\cdot, \cdot)$, 
the sequence $(\lambda_n)_{n\in\N}$ 
generated by the PRaGA or the PReGA is such that for all $n\in\N$, $\lambda_n = \lambda_\Sigma= \mu_{1,1} > \mu_{0,2} = \mathop{\inf}_{v\in V} \cJ(v)$. 
The sequence $(\lambda_n)_{n\in\N}$ thus does not converge towards the lowest eigenvalue of $a(\cdot, \cdot)$. }   
\end{example}

We prove here the result announced in Section~\ref{sec:initRay}. 

\begin{lemma}\label{lem:onebody}
 Let $\cX_1, \; \ldots, \;\cX_d$ be bounded regular domains of $\R^{m_1},\; \ldots,\; \R^{m_d}$ respectively. Let $V = H^1_0(\cX_1\times \cdots \times \cX_d)$,
 $H = L^2(\cX_1\times \cdots \times \cX_d)$, $W\in \cC^{0,\alpha}\left(\cX_1\times \cdots \times \cX_d, \R\right)$ for some $0 < \alpha <1$ and 
$a:V\times V \to \R$ be the continuous bilinear form defined by
$$
\forall v,w\in V, \quad a(v,w):= \int_{\cX_1\times \cdots \times \cX_d} \nabla v \cdot \nabla w + W vw.
$$
For all $1\leq j \leq d$, let $V_j:= H^1_0(\cX_j)$ and let $\Sigma^{\otimes}$ be the set of rank-1 tensor product functions defined by (\ref{eq:rank1}).
 Then, the two following assertions are equivalent:
\begin{itemize}
 \item [(i)] there exists an eigenvector $z$ of the bilinear form $a(\cdot, \cdot)$ which belongs to the set of rank-1 tensor product functions $\Sigma^\otimes$; 
\item[(ii)] the potential $W$ is the sum of $d$ one-body potentials, i.e. there exist $W_1 \in \cC^{0, \alpha}(\cX_1, \R),\; \ldots,\; W_d \in \cC^{0, \alpha}(\cX_d, \R)$ such that
$$
W(x_1, \ldots, x_d) = W_1(x_1) + \cdots + W_d(x_d).
$$
\end{itemize}
\end{lemma}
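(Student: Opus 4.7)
The implication $(ii) \Rightarrow (i)$ is classical separation of variables: for each $j$, the one-dimensional operator $-\Delta_{x_j} + W_j$ on $H^1_0(\cX_j)$ admits a lowest eigenpair $(\mu_j, \phi_j)$, and a direct computation shows that $z := \phi_1 \otimes \cdots \otimes \phi_d \in \Sigma^{\otimes}$ is an eigenvector of $a(\cdot, \cdot)$ with eigenvalue $\mu_1 + \cdots + \mu_d$.

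For $(i) \Rightarrow (ii)$, let $z = r^{(1)} \otimes \cdots \otimes r^{(d)} \in \Sigma^{\otimes} \setminus \{0\}$ be an eigenvector of $a(\cdot, \cdot)$ with eigenvalue $\lambda$, so that $-\Delta z + Wz = \lambda z$ on $\cX_1 \times \cdots \times \cX_d$. Because $W \in \cC^{0,\alpha}$, interior elliptic regularity (Schauder estimates) yields $z \in \cC^{2,\alpha}_{\rm loc}$. I first extract $\cC^{2,\alpha}$ representatives of each factor: since $z \not\equiv 0$, Fubini provides, for each $j$, points $\bar x_i \in \cX_i$ ($i \neq j$) with $\prod_{i \neq j} r^{(i)}(\bar x_i) \neq 0$ such that
\[
r^{(j)}(x_j) \;=\; \frac{z(\bar x_1, \ldots, \bar x_{j-1}, x_j, \bar x_{j+1}, \ldots, \bar x_d)}{\prod_{i \neq j} r^{(i)}(\bar x_i)}
\]
for almost every $x_j$; the right-hand side is $\cC^{2,\alpha}_{\rm loc}$ in $x_j$, so each $r^{(j)}$ may be taken to be $\cC^{2,\alpha}_{\rm loc}(\cX_j)$.

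The crucial step is a unique continuation argument. The H\"older regularity of $W - \lambda$, combined with Aronszajn's unique continuation theorem for second-order elliptic equations, forbids the non-trivial solution $z$ of $-\Delta z + (W - \lambda) z = 0$ from vanishing on any non-empty open subset of $\cX_1 \times \cdots \times \cX_d$. Since $z = \prod_j r^{(j)}$, this forces each open set $\Omega_j := \{ x_j \in \cX_j : r^{(j)}(x_j) \neq 0 \}$ to be dense in $\cX_j$; hence $\Omega := \Omega_1 \times \cdots \times \Omega_d$ is an open dense subset of the product domain. Expanding $-\Delta z = -\sum_j \bigl(\prod_{i \neq j} r^{(i)}\bigr) \Delta r^{(j)}$ and dividing the eigenvalue equation pointwise by $\prod_j r^{(j)} \neq 0$ on $\Omega$ yields
\[
W(x_1, \ldots, x_d) - \lambda \;=\; \sum_{j=1}^d \frac{\Delta r^{(j)}(x_j)}{r^{(j)}(x_j)} \qquad \text{on } \Omega.
\]

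To conclude, I fix $(\bar x_1, \ldots, \bar x_d) \in \Omega$ and define the continuous one-body potentials $W_1(x_1) := W(x_1, \bar x_2, \ldots, \bar x_d)$ and, for $j \geq 2$, $W_j(x_j) := W(\bar x_1, \ldots, x_j, \ldots, \bar x_d) - W(\bar x_1, \ldots, \bar x_d)$. Substituting the slices $(x_1, \bar x_2, \ldots, \bar x_d)$ and $(\bar x_1, \ldots, x_j, \ldots, \bar x_d) \in \Omega$ into the displayed identity and summing, a direct computation shows $W(x_1, \ldots, x_d) = \sum_{j=1}^d W_j(x_j)$ on the dense set $\Omega$; by continuity of both sides, this identity extends to all of $\cX_1 \times \cdots \times \cX_d$, proving $(ii)$. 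The main obstacle is the appeal to unique continuation, which guarantees that the set $\Omega$ on which the division step is legitimate is dense; the rest is a careful separation-of-variables computation supported by elliptic regularity of $z$.
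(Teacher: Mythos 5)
Your proof is correct, but the implication $(i)\Rightarrow(ii)$ is organized quite differently from the paper's. The paper never divides by $z$: for each $j$ it multiplies the eigenvalue equation by $\prod_{i\neq j}r^{(i)}$ and integrates over the variables $x_i$, $i\neq j$, which produces averaged one-body potentials $W_j(x_j)=\int W\prod_{i\neq j}|r^{(i)}|^2+\sum_{i\neq j}\|\nabla r^{(i)}\|_{L^2}^2$ (with the factors $L^2$-normalized) together with the one-dimensional equations $-\Delta r^{(j)}+W_jr^{(j)}=\lambda r^{(j)}$; summing these back against the complementary factors and subtracting from the original equation gives $\bigl(W-\sum_jW_j+(d-1)\lambda\bigr)z=0$ pointwise, and the conclusion follows because the nodal set of the eigenfunction $z$ has Lebesgue measure zero. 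You instead get the non-vanishing input from Aronszajn's unique continuation theorem, divide pointwise on the dense open set $\Omega$ where $z\neq 0$ to obtain $W-\lambda=\sum_j\Delta r^{(j)}/r^{(j)}$, and read off the $W_j$ as slices of $W$. Both routes hinge on essentially the same analytic fact --- a nontrivial eigenfunction cannot vanish on too large a set --- and both are valid. Your slicing definition makes the H\"older regularity of the $W_j$ immediate (they are restrictions of $W$), whereas the paper's averaged $W_j$ are exactly the effective potentials of the separated one-dimensional problems and its computation needs only ``nodal set of measure zero'' rather than full strong unique continuation. Minor remark: your Fubini extraction of $\cC^{2,\alpha}$ representatives of the factors is standard but slightly informal; the paper asserts the factor regularity directly from elliptic regularity in the same spirit.
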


\begin{proof}
The fact that $(ii)\Rightarrow (i)$ is obvious. Indeed, if for all $1\leq j \leq d$, 
$r_j$ is an eigenvector of the continuous symmetric bilinear form $a_j: V_j \times V_j \to \R$ defined by 
$$
\forall v_j, w_j \in V_j, \quad a_j\left( v_j, w_j\right):= \int_{\cX_j} \nabla v_j \cdot \nabla w_j + \int_{\cX_j}W_j  v_jw_j,
$$ 
with respect to the scalar product of $L^2(\cX_j)$, then the rank-1 tensor product function $z = r_1\otimes \cdots \otimes r_d \in \Sigma$ is an eigenvector 
of the bilinear form $a(\cdot, \cdot)$ 
with respect to the $L^2(\cX_1 \times \cdots \times \cX_d)$. 

\medskip

Let us now prove that $(i)\Rightarrow (ii)$. Let $z\in \Sigma$ be an $H$-normalized eigenvector of $a(\cdot, \cdot)$ associated with an eigenvalue $\lambda$. 
There exist $r_1 \in V_1,\; \ldots ,\; r_d\in V_d$ such that 
$z = r_1 \otimes \cdots \otimes r_d$. Without loss of generality, we can assume that $\left\|r_1\right\|_{L^2(\cX_1)} = \cdots = \left\|r_d\right\|_{L^2(\cX_d)} = 1$. Since the potential 
$W$ is assumed to be $\cC^{0, \alpha}$, then for all $1\leq j \leq d$, $r_j\in \cC^{2, \alpha}(\cX_j, \R)$ and it holds that for all $(x_1, \ldots, x_d)\in \cX_1 \times \cdots \times \cX_d$, 
\begin{align}\label{eq:initeigpb}
&-\Delta r_1(x_1)r_2(x_2) \cdots r_d(x_d) - r_1(x_1) \Delta r_2(x_2) \cdots r_d(x_d) - r_1(x_1) r_2(x_2) \cdots \Delta r_d(x_d) \nonumber\\ 
& + W(x_1, \ldots, x_d) r_1(x_1) \cdots r_d(x_d) = \lambda r_1(x_1) \cdots r_d(x_d). \nonumber\\
\end{align}
For all $1\leq j \leq d$, multiplying the above equation by $r_1(x_1)\cdots r_{j-1}(x_{j-1})r_{j+1}(x_{j+1})\cdots r_{d}(x_d)$ and integrating over 
$\cX_1 \times \cdots \times \cX_{j-1} \times \cX_j \times \cdots \times \cX_d$ leads to
\begin{equation}\label{eq:Wj}
-\Delta r_j(x_j) + W_j(x_j)r_j(x_j) = \lambda r_j(x_j),
\end{equation}
where for all $1\leq j \leq d$ and $x_j\in \cX_j$,
\begin{align*}
W_j(x_j)&:= \int_{\cX_1\times \cdots \times \cX_{j-1} \times \cX_{j+1} \times \cdots \times \cX_d} W(x_1, \ldots, x_d)\left|r_1(x_1) \cdots r_{j-1}(x_{j-1})r_{j+1}(x_{j+1}) \cdots r_{d}(x_d)\right|^2\,dx_1\cdots\,dx_{j-1}\,dx_{j+1}\cdots\,dx_d\\
& + \int_{\cX_1} \left|\nabla r_1(x_1)\right|^2\,dx_1 + \cdots + \int_{\cX_{j-1}} \left|\nabla r_{j-1}(x_{j-1})\right|^2\,dx_{j-1} + \int_{\cX_{j+1}} \left|\nabla r_{j+1}(x_{j+1})\right|^2\,dx_{j+1} + \cdots +  \int_{\cX_d} \left|\nabla r_{d}(x_d)\right|^2\,dx_d.\\ 
 \end{align*}
The regularity of the functions $W,\; r_{1}, \; \ldots,\; r_{d}$ implies that $W_j$ is $\cC^{0,\alpha}$. Multiplying (\ref{eq:Wj}) by 
$$
r_{1}(x_1)\cdots r_{j-1}(x_{j-1})r_{j+1}(x_{j+1})\cdots r_{d}(x_d)
$$ 
and summing all the obtained equations for $1 \leq j \leq d$, we obtain
\begin{align*}
&-\Delta r_{1}(x_1)r_{2}(x_2) \cdots r_{d}(x_d) - r_{1}(x_1) \Delta r_{2}(x_2) \cdots r_{d}(x_d) - r_{1}(x_1) r_{2}(x_2) \cdots \Delta r_d(x_d) \nonumber\\ 
& + \left(W_1(x_1) + \cdots + W_d(x_d)\right)r_{1}(x_1) \cdots r_{d}(x_d) = d\lambda r_{1}(x_1) \cdots r_{d}(x_d). \nonumber\\
\end{align*}
Subtracting this equation to (\ref{eq:initeigpb}) we obtain that for all $(x_1, \ldots, x_d)\in \cX_1 \times \cdots \times \cX_d$, 
$$
\left(W(x_1, \ldots x_d) - W_1(x_1) - \cdots - W_d(x_d) + (d-1)\lambda \right)r_{1}(x_1) \cdots r_{d}(x_d) = 0.
$$
Since $z = r_{1}\otimes \cdots \otimes r_{d}$ is an eigenfunction of the operator $-\Delta + W$ with homogeneous Dirichlet boundary conditions on $\cX_1 \times \cdots \times \cX_d$, and since 
$W$ is $\cC^{0, \alpha}$, from the Courant Nodal Theorem~\cite{Nodal}, it holds that the Lebesgue measure of the nodal set of $z$ is zero. Thus, 
the regularity of the functions $W, \; W_1, \; \ldots, \; W_d$ implies 
that $W(x_1, \ldots, x_d) = W_1(x_1) + \cdots + W_d(x_d) - (d-1) \lambda$. Hence $(ii)$.   
\end{proof}

\section{Acknowledgments}

This work has been done while E.C. and V.E. were long-term visitors at IPAM (UCLA). The authors would like to thank 
Sergue\"i Dolgov, Venera Khoromska\"ia and Boris Khoromskij for interesting discussions. 

\bibliography{biblio}

\begin{thebibliography}{10}

\bibitem{Chinesta-QC}
A.~Ammar and F.~Chinesta.
\newblock Circumventing the curse of dimensionality in the solution of highly
  multidimensional models encountered in {Q}uantum {M}echanics using meshfree
  finite sums decompositions.
\newblock {\em Lecture notes in Computational Science and Engineering},
  65:1--17, 2008.

\bibitem{Chinesta}
A.~Ammar, B.~Mokdad, F.~Chinesta, and R.~Keunings.
\newblock A new family of solvers for some classes of multidimensional partial
  differential equations encountered in kinetic theory modeling of complex
  fluids.
\newblock {\em Journal of Non-Newtonian Fluid Mechanics}, 139:153--176, 2006.

\bibitem{Bazant}
Z.~Ba{\v z}ant and L.~Cedolin.
\newblock {\em Stability of Structures: Elastic, Inelastic, Fracture and Damage
  Theories}.
\newblock World Scientific Publishing, 2010.

\bibitem{Bellman}
R.E. Bellman.
\newblock {\em Dynamic {P}rogramming}.
\newblock Princeton University Press, 1957.

\bibitem{Beylkin1}
G.~Beylkin, J.~Mohlenkamp, and F.~Perez.
\newblock Approximating a wavefunction as an unconstrained sum of {S}later
  determinants.
\newblock {\em Journal of Mathematical Physics}, 49:032107, 2008.

\bibitem{Beylkin2}
G.~Beylkin and M.J. Mohlenkamp.
\newblock Algorithms for numerical analysis in high dimensions.
\newblock {\em SIAM Journal on Scientific Computing}, 26:2133, 2005.

\bibitem{Maday}
A.~Buffa, Y.~Maday, A.T. Patera, C.~Prud'homme, and G.~Turinici.
\newblock A priori convergence of the greedy algorithm for the parametrized
  reduced basis.
\newblock {\em ESAIM: Mathematical Modelling and Numerical Analysis},
  46:595--603, 2012.

\bibitem{Griebel}
H.~Bungartz and M.~Griebel.
\newblock Sparse grids.
\newblock {\em Acta Numerica}, 13:147--269, 2004.

\bibitem{CELgreedy}
E~Canc{\`e}s, V.~Ehrlacher, and T.~Leli{\`e}vre.
\newblock {C}onvergence of a greedy algorithm for high-dimensional convex
  problems.
\newblock {\em {M}athematical {M}odels and {M}ethods in {A}pplied {S}ciences},
  21:2433--2467, 2011.

\bibitem{nonsym}
E~Canc{\`e}s, V.~Ehrlacher, and T.~Leli{\`e}vre.
\newblock Greedy algorithms for high-dimensional non-symmetric linear problems.
\newblock {\em arXiv:1210.6688}, 2012.

\bibitem{Ladevezeref}
F.~Chinesta, P.~Ladev\`eze, and E.~Cueto.
\newblock A short review on model order reduction based on {P}roper
  {G}eneralized {D}ecomposition.
\newblock {\em Archives of Computational Methods in Engineering}, 18:395--404,
  2011.

\bibitem{Cohen}
A.~Chkifa, A.~Cohen, R.~DeVore, and C.~Schwab.
\newblock Sparse adaptive {T}aylor approximation algorithms for parametric and
  stochastic elliptic {P}{D}{E}s.
\newblock {\em ESAIM: Mathematical Modelling and Numerical Analysis},
  47:253--280, 2013.

\bibitem{Nodal}
R.~Courant and D.~Hilbert.
\newblock {\em Methods of {M}athematical {P}hysics, {V}ol. {I}}.
\newblock Interscience Publishers, Inc., New York, 1953.

\bibitem{NouyFalcopen}
A.~Falco and A.~Nouy.
\newblock Constrained tensor product approximations based on penalized best
  approximations.
\newblock {\em http://hal.archives-ouvertes.fr/hal-00577942/}, 2011.

\bibitem{Figueroa}
L.~Figueroa and E.~Suli.
\newblock Greedy approximation of high-dimensional {O}rnstein-{U}hlenbeck
  operators.
\newblock {\em Foundations of Computational Mathematics}, 12:573--623, 2012.

\bibitem{Hackbusch}
W.~Hackbusch.
\newblock {\em Tensor spaces and numerical tensor calculus}.
\newblock Springer, 2012.

\bibitem{Schneider2}
S.~Holtz, R.~Schneider, and T.~Rohwedder.
\newblock The alternating linear scheme for tensor optimization in the {T}{T}
  format.
\newblock {\em SIAM Journal on Scientific Computing}, 34:683, 2012.

\bibitem{Khoromskij}
V.~Khoromskaia, B.N. Khoromskij, and R.~Schneider.
\newblock {QTT} representation of the {H}artree and {E}xchange operators in
  electronic structure calculations.
\newblock {\em Computational Methods in Applied Mathematics}, 11:327--341,
  2011.

\bibitem{Ladeveze}
P.~Ladev\`eze.
\newblock {\em Nonlinear computational structural mechanics: new approaches and
  non-incremental methods of calculation}.
\newblock Springer, Berlin, 1999.

\bibitem{LBLM}
C.~Le~Bris, T.~Leli\`evre, and Y.~Maday.
\newblock {R}esults and questions on a nonlinear approximation approach for
  solving high-dimensional partial differential equations.
\newblock {\em Constructive Approximation}, 30:621--651, 2009.

\bibitem{Levitt}
A.~Levitt.
\newblock Convergence of gradient-based algorithms for the {H}artree-{F}ock
  equations.
\newblock {\em ESAIM: Mathematical Modelling and Numerical Analysis (M2AN)},
  46:1321--1336, 2012.

\bibitem{Lojas}
S.~Lojasiewicz.
\newblock {\em Ensembles semi-analytiques}.
\newblock Institut des Hautes Etudes Scientifiques, 1965.

\bibitem{Nouy}
A.~Nouy.
\newblock Recent developments in spectral stochastic methods for the numerical
  solution of stochastic partial differential equations.
\newblock {\em Archives of Computational Methods in Engineering}, 16:251--285,
  2009.

\bibitem{NouyFalco}
A.~Nouy and A.~Falco.
\newblock Proper {G}eneralized {D}ecomposition for nonlinear convex problems in
  tensor {B}anach spaces.
\newblock {\em Numerische Mathematik}, 121:503--530, 2012.

\bibitem{ReedSimon4}
M.~Reed and B.~Simon.
\newblock {\em Methods of {M}odern {M}athematical {P}hysics {I}{V}: {A}nalysis
  of {O}perators}.
\newblock Academic Press, 1978.

\bibitem{Schneider}
R.~Schneider, T.~Rohwedder, and O.~Legeza.
\newblock Tensor methods in quantum chemistry.
\newblock {\em to appear in the Encyclopedia of Applied and Computational
  Mathematics}, 2012.

\bibitem{Temlyakov}
V.N. Temlyakov.
\newblock Greedy {A}pproximation.
\newblock {\em Acta Numerica}, 17:235--409, 2008.

\bibitem{SchwabPeter}
T.~von Petersdorff and C.~Schwab.
\newblock Numerical solution of parabolic equations in high dimensions.
\newblock {\em M2AN Mathematical Modelling and Numerical Analysis}, 38:93--127,
  2004.

\end{thebibliography}

\end{document}